\theoremstyle{plain}
    \newtheorem{thm}{Theorem}[section]
       \newtheorem{lem}{Lemma}[section]
       \newtheorem{rem}{Remark}[section]
\numberwithin{equation}{section}
\begin{document}
\title{Existence and dimensions of global attractors for a delayed reaction-diffusion equation on an unbounded domain}

\author{Wenjie Hu$^{1,2}$,  Tom\'{a}s
Caraballo$^{3,4}$\footnote{Corresponding author.  E-mail address: caraball@us.es (Tom\'as Caraballo).}, Alain Miranville$^{5,6}$
\\
\small  $^1$The MOE-LCSM, School of Mathematics and Statistics,  Hunan Normal University,\\
\small Changsha, Hunan 410081, China\\
\small $^2$Journal House, Hunan Normal University, Changsha, Hunan 410081, China\\
\small $^3$Dpto. Ecuaciones Diferenciales y An\'{a}lisis Num\'{e}rico, Facultad de Matem\'{a}ticas,\\
\small  Universidad de Sevilla, c/ Tarfia s/n, 41012-Sevilla, Spain\\
\small $^4$Department of Mathematics, Wenzhou University, \\
\small  Wenzhou, Zhejiang Province, 325035, China.\\
\small $^5$School of Mathematics and Information Science, Henan Normal University, \\
\small  Xinxiang, Henan Province, 453007, China.\\
\small $^6$Math\'{e}matiques, SP2MI, Universit\'{e} de Poitiers, \\
\small t\'{e}l\'{e}port 2, avenue Marie-et-Pierre-Curie, 86962 Futuroscope Chasseneuil cedex, France
}

\date {}
\maketitle

\begin{abstract}
The purpose of this paper is to investigate the existence and   Hausdorff dimension as well as fractal dimension  of global attractors for a delayed reaction-diffusion equation on an unbounded domain. The noncompactness of the domain causes the Laplace operator has a continuous spectrum, the semigroup generated by the linear part and the Sobolev embeddings are no longer compact, making the problem more difficult compared with the equations on bounded domains. We first obtain the existence of an absorbing set for the infinite dimensional dynamical system generated by the equation by a priori estimate of the solutions. Then, we show the asymptotic compactness of the solution semiflow by  an uniform a priori estimates for far-field values of solutions together with the Arzel\`a-Ascoli theorem, which facilitates us to show the existence of global attractors. By decomposing the solution into three parts and establishing a squeezing property of each  part,  we obtain the explicit upper estimation of both Hausdorff and fractal dimension of the global attractors, which  only depend on the inner characteristic of the equation, while not related  to the entropy number compared with the existing literature.
\end{abstract}
\bigskip

{\bf Key words} {\em Hausdorff dimension, fractal dimension, unbounded domain, global attractors, delay, reaction-diffusion equation}

\section{Introduction}
Consider the following delayed reaction-diffusion equation on $\mathbb{R}^N$
\begin{equation}\label{1}
\left\{\begin{array}{l}\frac{\partial u}{\partial t}(x, t)= \Delta u(x, t)-\mu u(x, t)+\sigma u(x,t-\tau)+f \left(u(x,t-\tau)\right)+g(x), t>0, x \in \mathbb{R}^N,\\ u_0(x,s)=\phi(x, s),-\tau \leq s \leq 0, x \in \mathbb{R}^N,\end{array}\right.
\end{equation}
where, $N$ is a positive integer, $u(\cdot,t)\in \mathbb{X}\triangleq L^2(\mathbb{R}^N)$, the square Lesbegue integrable function on $\mathbb{R}^N$ with  usual inner product $(\cdot,\cdot)$ and norm $\|\cdot\|$, for any $t\geq0$. $\Delta$ is the Laplacian operator on $\mathbb{R}^N$, $\mu, \sigma$ and $\tau$ are positive constants, $g\in \mathbb{X}$. $u_t\in \mathcal{C}$ is defined by  $u_t(x,\theta)=u(x, t+\theta),-\tau \leq \theta \leq 0$, where $\mathcal{C}$ is the space of continuous functions from $[-\tau, 0]$ to $\mathbb{X}$ equipped with the supremum norm $\|\phi\|_{\mathcal{C}}=\sup_{\theta \in[-\tau, 0]}\|\phi(\theta)\|$ for any $\phi \in \mathcal{C}$.  The initial data $\phi\in \mathcal{C}$ and $f$ is a nonlinear operators on $\mathcal{C}$.  Equation \eqref{1} arises largely from the real world physical, chemical and biological processes with time delays. For instance, in the case $\sigma=0$ and $g\equiv 0$, \eqref{1} can be used to describe the evolution of the mature population of species, such as birds, whose immature individuals do not move around but the mature ones do and hence have drawn much attention from mathematical biology community in the past decades. Most existing works only concerned about the existence and  qualitative properties of traveling wave front solutions, which may explain the invasion of species. See, for instance, \cite{FT,GSA,MSL,SJZ} and the references therein.

Nevertheless, little attention has been paid to the asymptotic behavior of \eqref{1} due to the non-compactness of the spatial domain,  causing many nice results in dynamical system theory were ineffective. Recently, Yi, Chen and Wu \cite{YCWT} overcame this difficulty by innovatively  introducing the compact open topology and showed the existence and global attractivity of a positive steady state by delicately constructing a priori estimate for nontrivial solutions under the compact open topology in the case $\sigma=0, g\equiv 0$ and the nonlinear term $f$ is globally bounded and admits a unique two periodic fixed point. The methods have  been generalized to investigate the existence and structure of attractors for deterministic and stochastic nonlocal delayed reaction-diffusion equations on a semi-infinite domain with a Dirichlet boundary condition at the finite end in \cite{YZD,HC} and have been adopted to tackle similar problems with Neumann boundary condition in \cite{HDZ}.

One natural question arises. What can we say about the dynamics of \eqref{1} if we do not impose the conditions on $f$ used in \cite{YCWT} and work in the  natural phase space $\mathcal{C}$ under the usual supremum norm? Since \eqref{1} generates an infinite dimensional dynamical system in $\mathcal{C}$, as a first step, if we can obtain the existence of attractors for \eqref{1}, then its essential dynamics can be reduced to a compact set. Furthermore,  if the attractors have  finite dimensionality, then one is likely to apply the analysis and computation tools of finite dimension dynamical systems to study the complex structure of the attractors and hence give a global picture of dynamics of \eqref{1}. Indeed, the study of existence and estimation of  Hausdorff dimension as well as fractal dimension  of  attractors for both deterministic or random infinite dimensional dynamical systems generated by  evolution equations has drawn much attention from both dynamical system and applied physics community in the past decades due to the significant roles they  played  in the study of the long time behavior of nonlinear dynamical systems.

The theory of  existence of attractors for deterministic infinite dimensional dynamical systems has been well established, see the monographs \cite{BV,HJ,TR}.  Criteria for the  finite Hausdorff dimensionality of attractors for deterministic fluid dynamics models were firstly derived by  Douady and  Oesterle \cite{DO}, which was later generalized by  Constantin,  Foias and  Temam \cite{T6,CFT} (see also the book of  Temam \cite{TR}). Since then, the methods have been widely and extensively adopted to investigate dimensions of global attractors for various partial differential equations (PDEs) on different domains. For partial differential equations on bounded domains, \cite{BPV} studied the existence and dimension estimation of attractors for autonomous N-S equations, \cite{CFT88} studied the dimension of the attractors in two-dimensional turbulence.  Moreover, \cite{CV94} extended the results to the nonautonomous case. There is a huge literature in this field to be covered here and more works can be found in monographs \cite{BV,HJ,TR} and the references therein.

On the other hand, when the domain is unbounded, several problems arise. As pointed in \cite{CE00}, the Laplace operator has a continuous spectrum, $H^1(\mathbb{R}^n)$ is not compactly embedded in $L^2(\mathbb{R}^n)$ and the solution semiflow does not have compact absorbing sets   in the original topology, causing the method of obtaining compactness, by a priori estimates and compactness of the Sobolev embeddings for the bounded case,   ineffective. In  order to overcome these difficulties,  \cite{BV90} established a method to study the existence and dimensions estimation  of attractors for the autonomous reaction diffusion equations on unbounded domain in a weighted space, which was then adopted by \cite{EM99} to explore dimensions  of reaction diffusion equations, used by \cite{FLS96} to explore global attractors for degenerate parabolic equations and extended to nonautonomous case in \cite{CE00}. However, when studying in weighted spaces one has to impose an additional condition ensuring that the initial data and forcing term also belong to the corresponding spaces \cite{GR02}. Thus, \cite{W99} established a new method to obtain  the results in the usual Hilbert space based on  uniform a priori estimates for far-field values of solutions following the ideas in \cite{B97} and \cite{B04}, where global attractors of generalized semiflows and the Navier-Stokes equations as well as the damped semilinear wave equations were studied.

The above mentioned theory and methods are mainly established for PDEs in Hilbert spaces. Nevertheless, the natural phase space $\mathcal{C}$ of \eqref{1} is a Banach space. The lack of smooth inner product makes the above theoretical results cannot be directly applied. Thus, for the purpose of obtaining the existence and  dimensions estimation of global attractors for delayed partial differential equations,  So and Wu \cite{SW91} recast general semilinear partial functional differential equations into a product Hilbert space and obtained  topological dimensions of attractors by a semigroup approach. Su and Qin \cite{QY} also recast 2D Navier-Stokes-Voight equations with a distributed delay in a product Hilbert space to establish  the existence and estimation of Hausdorff dimensions of attractors by virtue of energy estimates as well as compact embedding technique. Both works consider equations on bounded domains and recast functional differential equations in an auxiliary product Hilbert space, which  is unnatural and may cause redundancy in dimensions estimation.

In our current work, we investigate the existence as well as their  Hausdorff  dimension and fractal dimension estimation of global attractors for \eqref{1}  directly in the natural phase space, i.e.,  the Banach space $\mathcal{C}$.  In order to show the existence of global attractors,  we show the asymptotic  compactness of the solutions semiflow by uniform a priori estimates for far-field values of solutions. For the purpose of estimating the dimensions of obtained attractors, we decompose the solution of \eqref{1} into a sum of three parts, among which, each part fulfills  squeezing property. The idea of decomposing the solutions originates from \cite{ZM}, where fractal dimension of attractors for stochastic non-autonomous reaction-diffusion equation in $\mathbb{R}^3$ was studied. Unlike \cite{ZM}, where orthogonal projectors with finite rank in Hilbert space together with variation techniques are adopted to analyze the finite dimension of the first part and the decay of the other two parts, we adopt the phase space decomposition based on the exponential dichotomy   to overcome barrier caused by the lack of smooth inner product in a Banach space.  Here, our approach  gives explicit bounds that only depend on the inner characteristics of equation \eqref{1}, while not related   to the entropy number as \cite{C9, C17,C18,C21,C20,C25} did.

The remaining of this paper is organized as follows. Section 2 is devoted to preliminaries, including notation, definitions and some lemmas. In  Section 3,  we firstly give some a priori estimate of the solution on a unbounded domain and a priori estimates for far-field values of solutions, which shows the existence of absorbing sets.  Then, we prove asymptotic compactness of the solution semiflow in Section 4, which combined with the existence of absorbing sets implies the existence of global attractors. In Section 5, we construct squeeze property by decomposing the solution into three parts, two on a bounded domain while the third on an infinite domain. Subsequently, we give explicit upper bound of the Hausdorff  and fractal dimensions of the obtained  attractors in Section 6 by the squeezing property established in Section 5.  At last, we summarize the paper by making a detailed comparison of the present work with the methods established by Chepyzhov, Efendiev, Miranville and Zelik  in their  ground breaking pioneer works \cite{CE00,EM99,EM01,C18} and point out some potential directions for future study.

\section{Preliminaries}
We first introduce more notation used throughout the remaining part of this paper. Define the Hilbert space $\mathbb{X}^1$ as  $\mathbb{X}^1\triangleq \{\varphi\in \mathbb{X}| \frac{\partial \varphi(x) }{x_i}\in \mathbb{X}, i=1,2, \cdots, N, x\in \mathbb{R}^N\}$. For convenience, we take the seminorm $\|\varphi\|_{\mathbb{X}^1}=[\int_{\mathbb{R}^N} \nabla\varphi(x)^2dx]^{\frac{1}{2}}, \varphi\in \mathbb{X}^1$  as the norm of $\mathbb{X}^1$ since it is in fact a norm equivalent to the usual norm of $\mathbb{X}^1$.  Denote by $\mathcal{C}^1=C([-\tau, 0],\mathbb{X}^1)$ the set of all  continuous functions from $[-\tau, 0]$ to $\mathbb{X}^1$ equipped with the usual supremum norm $\|\phi\|_{\mathcal{C}^1} =\sup\{\| \phi(\xi)\|_{\mathbb{X}^1} :\xi \in [-\tau,0]\}$ for all  $\phi\in \mathcal{C}^1$.

 For a given $0<K<+\infty$, denote the ball centered at 0 with radius $K$ by $\Omega_K= \{x \in \mathbb{R}^N:|x|
 <K \}$, its boundary $\partial \Omega_K$ by $\partial \Omega_K=\left\{x \in \mathbb{R}^N:|x|=K\right\}$ and its complement by $\Omega_K^C=\left\{x \in \mathbb{R}^N:|x|\geq K\right\}$ respectively.  Let $\mathbb{X}_{\Omega_K}$ be the Hilbert space $\mathbb{X}_{\Omega_K}=\{\phi \in L^2(\Omega_K)| \phi(\partial \Omega_K)=0\}$, where $L^2(\Omega_K)$ is the square Lesbegue integral functions on $\Omega_K$. Define $\mathbb{X}_{\Omega_K}^1$ as  $\mathbb{X}_{\Omega_K}^1\triangleq \{\varphi\in \mathbb{X}_{\Omega_K}| \frac{\partial \varphi(x) }{x_i}\in L^2(\Omega_K), i=1,2, \cdots, N, x\in \Omega_K\}$. For convenience, we take the seminorm $\|\varphi\|_{\mathbb{X}_{\Omega_K}^1}=[\int_{\Omega_K} \nabla\varphi(x)^2dx]^{\frac{1}{2}}, \varphi\in \mathbb{X}_{\Omega_K}^1$  as the norm of $\mathbb{X}_{\Omega_K}^1$ since it is in fact a norm equivalent to the usual norm of $\mathbb{X}_{\Omega_K}^1$.  Denote by $\mathcal{C}_{\Omega_K}=C([-\tau, 0],\mathbb{X}_{\Omega_K})$ and $\mathcal{C}_{\Omega_K}^1=C([-\tau, 0],\mathbb{X}_{\Omega_K}^1)$ the set of all  continuous functions from $[-\tau, 0]$ to $\mathbb{X}_{\Omega_K}$ and $\mathbb{X}_{\Omega_K}^1$ equipped with the usual supremum norm $\|\phi\|_{\mathcal{C}_{\Omega_K}} =\sup\{\| \phi(\xi)\|_{\mathbb{X}_{\Omega_K}} :\xi \in [-\tau,0]\}$ and $\|\phi\|_{\mathcal{C}_{\Omega_K}^1} =\sup\{\| \phi(\xi)\|_{\mathbb{X}_{\Omega_K}^1} :\xi \in [-\tau,0]\}$ for all  $\phi\in \mathcal{C}_{\Omega_K}$ and $\phi\in \mathcal{C}_{\Omega_K}^1$ respectively.

By the  Fourier transformation \cite{YCWT}, we can see that the semigroup  $S(t): \mathbb{X}\rightarrow \mathbb{X}$ for any $t\geq 0$ generated by $\Delta-\mu I$ is defined as
 \begin{equation}\label{2.1}
\left\{\begin{array}{l}
S(0)[\phi](x)=\phi(x), \\
S(t)[\phi](x)=\frac{e^{-\mu t}}{ (4 \pi t)^{N/2}} \int_{\mathbb{R}^N} \phi(y) e^{ -\frac{|x-y|^{2}}{4 t} } dy, t \in(0, \infty),
\end{array}\right.
 \end{equation}
for $(x, \phi) \in \mathbb{R}^N \times \mathbb{X}$, which is  analytic and strongly continuous on $\mathbb{X}$.
For later use, we introduce the following results concerning the properties of semigroup $\{S(t)\}_{t\geq 0}$. The details of the proof can be found in \cite[Lemmas 2.1, 2.4]{YCWT}.
\begin{lem}\label{lem2.1a}
Let $\{S(t)\}_{t\geq 0}$  be defined by \eqref{2.1}, then we have the following results.\\
(i) $\|S(t) \phi\|\leq e^{-\mu t} \|\phi\|$ for all $\phi \in \mathbb{X}$, $t \in \mathbb{R}_{+}$.\\
(ii) $\{S(t)\}_{t\geq 0}$ is an analytic and strongly continuous semigroup on $\mathbb{X}$.\\
(iii) For all $t \in(0, \infty)$ and $(x, \phi) \in (0, \infty)\times \mathbb{X},$ there holds
$$
\begin{array}{l}
 S(t)[a](x)=a e^{-\mu t}.
\end{array}
$$
\end{lem}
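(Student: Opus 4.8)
The plan is to exploit the explicit Gaussian structure of \eqref{2.1}. Writing $G_t(z) = (4\pi t)^{-N/2}\exp(-|z|^2/(4t))$ for the standard heat kernel, the definition reads $S(t)\phi = e^{-\mu t}\,(G_t * \phi)$, so $\{S(t)\}_{t\ge 0}$ is simply the heat semigroup damped by the factor $e^{-\mu t}$. Everything rests on the normalization $\int_{\mathbb{R}^N} G_t(z)\,dz = 1$, which follows from the substitution $z = 2\sqrt{t}\,w$ and $\int_{\mathbb{R}^N}e^{-|w|^2}\,dw = \pi^{N/2}$. Part (iii) is then immediate: for the constant function $a$ one factors $a$ out of the integral in \eqref{2.1} and applies the normalization (understood pointwise via the integral representation, which makes sense for bounded data even though a nonzero constant is not in $\mathbb{X}$) to obtain $S(t)[a](x) = a\,e^{-\mu t}$.

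For (i) I would apply Young's convolution inequality together with $\|G_t\|_{L^1(\mathbb{R}^N)} = 1$, giving $\|S(t)\phi\| = e^{-\mu t}\|G_t * \phi\| \le e^{-\mu t}\|G_t\|_{L^1}\|\phi\| = e^{-\mu t}\|\phi\|$. The same bound follows on the Fourier side, where $\widehat{S(t)\phi}(\xi) = e^{-\mu t}e^{-|\xi|^2 t}\,\hat\phi(\xi)$, from Plancherel's identity and $e^{-|\xi|^2 t}\le 1$.

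For the semigroup law in (ii) I would combine the Chapman--Kolmogorov identity $G_s * G_t = G_{s+t}$ (immediate from $\widehat{G_t}(\xi) = e^{-|\xi|^2 t}$) with $e^{-\mu s}e^{-\mu t} = e^{-\mu(s+t)}$ to get $S(s)S(t) = S(s+t)$. Strong continuity is cleanest via Fourier: by Plancherel,
\begin{equation*}
\|S(t)\phi - \phi\|^2 = \int_{\mathbb{R}^N}\bigl|e^{-\mu t}e^{-|\xi|^2 t} - 1\bigr|^2\,|\hat\phi(\xi)|^2\,d\xi,
\end{equation*}
whose integrand is dominated by $4|\hat\phi|^2\in L^1$ and tends to $0$ pointwise as $t\downarrow 0$, so dominated convergence gives $\|S(t)\phi - \phi\|\to 0$.

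The hard part is the analyticity assertion, which is not a one-line estimate. I would establish it by showing the generator $A = \Delta - \mu I$ is sectorial: on the Fourier side $A$ is multiplication by the symbol $-\mu - |\xi|^2$, whose range is $(-\infty,-\mu]$, so the resolvent satisfies a bound of the form $\|(\lambda - A)^{-1}\| \le C/|\lambda + \mu|$ on a sector $|\arg(\lambda + \mu)| < \tfrac{\pi}{2} + \delta$; sectoriality then yields analyticity through the standard generation theorem. An equivalent, more hands-on route is to continue the kernel to complex times $t$ with $\operatorname{Re} t > 0$, where $\exp(-|x-y|^2/(4t))$ stays integrable, verify holomorphy of the operator family in the right half-plane, and check that the contraction-type bounds persist on subsectors. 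Since this verification is classical for the heat semigroup, I would keep the symbol computation in the foreground and defer the routine bookkeeping to \cite{YCWT}.
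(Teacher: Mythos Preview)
Your argument is correct in all three parts: the Young/Plancherel bound for (i), the Fourier-side verification of the semigroup law and strong continuity together with the sectoriality route to analyticity for (ii), and the heat-kernel normalization for (iii) (with the appropriate caveat that the constant $a$ is handled via the pointwise integral representation rather than as an element of $\mathbb{X}$).

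As for the comparison: the paper does not actually prove this lemma. It states the result and immediately refers the reader to \cite[Lemmas 2.1, 2.4]{YCWT} for the details. So your proposal is strictly more informative than what the paper contains; you supply the standard heat-semigroup computations that the paper outsources entirely. In the end both you and the paper point to \cite{YCWT} for the analyticity bookkeeping, but you at least make explicit which mechanism (sectoriality of the symbol $-\mu - |\xi|^2$) is doing the work, whereas the paper gives no indication.
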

In the remaining part, we always assume that the nonlinear term $f$ satisfies $f(\mathbf{0})=0$ and the following Lipschitz conditions.

$\mathbf{Hypothesis\  A1}$
$\left\|f\left(\phi_1\right)-f\left(\phi_2\right)\right\| \leq L_f\left\|\phi_1-\phi_2\right\|_{\mathcal{C}} \text { for any } \phi_1, \phi_2 \in \mathcal{C}.$

 By an argument of steps, we know that for any given $\phi \in \mathcal{C}$, \eqref{1} has a unique solution in $\mathcal{C}$ for all $t\geq 0$. By methods of variation of constant, \eqref{1} is equivalent to the following integral equation with the given initial function
  \begin{equation}\label{2.2a}
    \left\{
     \begin{array}{ll}
     \displaystyle u(t)=S(t)\phi(0) +\sigma \int_{0}^{t}S(t-s) u(s-\tau) \mathrm{d}s+ \int_{0}^{t}S(t-s) [f(u(s-\tau))+g]\mathrm{d}s,   t>0, \\u_{0}=\phi\in \mathcal{C}.
     \end{array}
     \right.
  \end{equation}

Following similar techniques as  \cite[Theorem 8]{WK}, we have the following results on the existence of solutions.
\begin{lem}\label{lemma4.2} Assume that $\mathbf{Hypothesis\  A1}$ holds. Then, \\
(i) for any $\phi \in \mathcal{C}$, there exists a solution $u(\cdot)$ to problem \eqref{1} with $u \in L^2(-\tau, T ; \mathbb{X}) \cap L^2(0, T ; \mathbb{X}^1) \cap L^{\infty}(0, T ; \mathbb{X}) \cap C([-\tau, T] ; \mathbb{X}), \forall T>0$,\\
(ii) for any $\phi \in C([-\tau, T] ; \mathbb{X}^1)$, problem \eqref{1} admits a strong solution
$$
u \in L^2\left(0, T ; \mathbb{X}\right) \cap C([-\tau, T] ; \mathbb{X}^1), \quad \forall T>0.
$$
\end{lem}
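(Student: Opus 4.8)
The plan is to combine the \emph{method of steps} with standard existence theory for the resulting non-delayed semilinear parabolic problems, working throughout with the mild formulation \eqref{2.2a} so as to exploit the explicit semigroup $\{S(t)\}_{t\ge 0}$ rather than any compactness of Sobolev embeddings (which fails on $\mathbb{R}^N$). On the first interval $[0,\tau]$ the delayed argument satisfies $u(s-\tau)=\phi(s-\tau)$, so the right-hand side of \eqref{1} involves only the known datum $\phi$; thus \eqref{1} reduces to an ordinary reaction-diffusion equation with the prescribed forcing $\sigma\phi(\cdot-\tau)+f(\phi(\cdot-\tau))+g$. Having solved on $[0,\tau]$, I would iterate: on $[(k-1)\tau,k\tau]$ the values $u(s-\tau)$ are already determined from the previous step, and the same argument applies. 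Since the solution on $[(k-1)\tau,k\tau]$ patches continuously to the next, this produces a solution on $[0,T]$ for arbitrary $T>0$.

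For local existence and uniqueness on each step I would set up a contraction mapping for the integral operator defined by the right-hand side of \eqref{2.2a}, posed on $C([0,\delta];\mathbb{X})$ for small $\delta>0$. Here Lemma \ref{lem2.1a}(i) gives $\|S(t)\psi\|\le e^{-\mu t}\|\psi\|$, and $\mathbf{Hypothesis\ A1}$ together with $f(\mathbf{0})=0$ controls the nonlinear term, so the operator is a contraction for $\delta$ small enough depending only on $L_f$, $\sigma$ and $\mu$; its fixed point is the unique mild solution. Crucially $\delta$ does not shrink along the iteration, so finitely many steps reach any prescribed $T$. The bounds for the regularity classes then follow from energy estimates: testing \eqref{1} with $u$ and integrating by parts over $\mathbb{R}^N$ produces the dissipative term $\|u\|_{\mathbb{X}^1}^2=\int_{\mathbb{R}^N}|\nabla u|^2\,dx$, and Young's inequality applied to the remaining terms together with Gronwall's lemma yields the bounds in $L^\infty(0,T;\mathbb{X})$ and $L^2(0,T;\mathbb{X}^1)$; since $\|f(u_t)\|\le L_f\|u_t\|_{\mathcal{C}}$ grows at most linearly, no blow-up occurs and the solution is global.

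For the strong solution in part (ii), assuming the stronger datum $\phi\in C([-\tau,T];\mathbb{X}^1)$, I would upgrade the regularity using the smoothing property of the analytic semigroup from Lemma \ref{lem2.1a}(ii): the estimate $\|S(t)\psi\|_{\mathbb{X}^1}\le C\,t^{-1/2}e^{-\mu t}\|\psi\|$ makes the convolution integrals in \eqref{2.2a} take values in $\mathbb{X}^1$ continuously in $t$, while the $\mathbb{X}^1$-regularity of $\phi(0)$ handles the leading term $S(t)\phi(0)$. Equivalently one may proceed through a higher-order energy estimate, testing with $-\Delta u$ to control $\tfrac{d}{dt}\|u\|_{\mathbb{X}^1}^2$ and integrating. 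The main obstacle, and the point where the unbounded domain genuinely intervenes, is precisely this regularity step: on $\mathbb{R}^N$ one cannot invoke compactness of $\mathbb{X}^1\hookrightarrow\mathbb{X}$ to pass to limits in a Galerkin scheme, so the argument must rely on the quantitative smoothing and contraction estimates for $\{S(t)\}_{t\ge 0}$ recorded in Lemma \ref{lem2.1a} rather than on any compactness. These estimates hold uniformly on $\mathbb{R}^N$ because $S(t)$ is given explicitly by the heat kernel in \eqref{2.1}, which is what makes the whole scheme go through, following the lines of \cite[Theorem 8]{WK}.
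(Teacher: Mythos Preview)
Your proposal is correct and aligned with the paper, which in fact supplies no proof of its own but simply refers to \cite[Theorem~8]{WK}; the method-of-steps argument combined with the mild formulation \eqref{2.2a} and the energy estimates you sketch is precisely the standard route behind that reference. One minor simplification worth noting: since on each subinterval $[(k-1)\tau,k\tau]$ the delayed values $u(\cdot-\tau)$ are already determined from the previous step, the right-hand side of \eqref{2.2a} is an \emph{explicit} formula rather than a genuine fixed-point equation, so the contraction-mapping step is superfluous for existence (though harmless).
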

Let the functions $\chi_{\Omega_K}$ and $\chi_{\Omega_K^C}$ be the characteristic functions on $\Omega_K$ and  $\Omega_K^C$ respectively, that is
\begin{equation}\label{2.1}
\chi_{\Omega_K}(x)=\left\{\begin{array}{l}0, x \in \Omega_K^C,\\ 1,x \in \Omega_K,\end{array}\right.
\end{equation}
and
\begin{equation}\label{2.2}
\chi_{\Omega_K^C}(x)=\left\{\begin{array}{l}0, x \in \Omega_K,\\ 1, x \in \Omega_K^C.\end{array}\right.
\end{equation}
 Set $u(x,t)=v(x,t)+w(x,t)$, with $v(x,t)=u(x,t) \chi_{\Omega_K}(x)$ and $w(x,t)=u(x,t) \chi_{\Omega_K^C}(x)$
for any $t \geq -\tau$ and $x \in \mathbb{R}^N$, then $v(t, x)$ and $w(t, x)$ satisfy
\begin{equation}\label{2.3}
\left\{\begin{array}{l}
\frac{\partial v(x,t)}{\partial t}=\Delta v(x,t)-\mu v(x,t)+\sigma v(x,t-\tau)+f \left(u(x,t-\tau)\right)\chi_{\Omega_K}(x)+g(x)\chi_{\Omega_K}(x), \\
v(x,s)=\phi(x, s) \chi_{\Omega_K}(x)\triangleq \varphi, s \in[-\tau, 0], x \in \mathbb{R}^N,\\
v(x,t)=0, t \in(0, \infty), x \in \partial \Omega_K
\end{array}\right.
\end{equation}
and
\begin{equation}\label{2.4}
\left\{\begin{array}{l}
\frac{\partial w(x,t)}{\partial t}=\Delta w(x,t)-\mu w(x,t)+\sigma w(x,t-\tau)+f \left(u(x,t-\tau)\right)\chi_{\Omega_K^C}(x)+g(x)\chi_{\Omega_K^C}(x), \\
w(x,s)=\phi(x, s) \chi_{\Omega_K^C}(x)\triangleq \psi, s \in[-\tau, 0], x \in \mathbb{R}^N,
\end{array}\right.
\end{equation}
respectively, in which $u$ is the solution to \eqref{1}.

For later use, we  introduce the following lemma from \cite{HJ} concerning the existence of global attractors for continuous semigroups and finite cover of finite dimensional subspace $F$ of a Banach space $X$.
\begin{lem}\label{lem2.1}
Let $X$ be a Polish space and $S(t): X \rightarrow X$  be a continuous semigroup for all $t \geqslant 0$ which is asymptotically compact. If $S(t)$ admits a bounded  absorbing set  $\mathcal{B}\subseteq X$, then is has a global attractor $ \mathcal{A}\subseteq \mathcal{B}$, which is given by
$$
\mathcal{A}=\bigcap_{s\geq 0} \overline{\bigcup_{t \geqslant s} S(t) \mathcal{B}}.
$$
\end{lem}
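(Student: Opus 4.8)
The plan is to identify the candidate set $\mathcal{A}=\bigcap_{s\geq 0}\overline{\bigcup_{t\geq s}S(t)\mathcal{B}}$ as the $\omega$-limit set $\omega(\mathcal{B})$ of the absorbing set and to verify the three defining properties of a global attractor: compactness, invariance, and attraction of every bounded set. First I would record the sequential characterization, namely that $y\in\mathcal{A}$ if and only if there exist sequences $t_n\to\infty$ and $x_n\in\mathcal{B}$ with $S(t_n)x_n\to y$; this follows directly from the definition of the intersection and the closures, and it will serve as the working description throughout the argument.

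Next I would establish that $\mathcal{A}$ is nonempty and compact. Nonemptiness is immediate from asymptotic compactness applied to any fixed $x\in\mathcal{B}$ and any $t_n\to\infty$, since a convergent subsequence of $\{S(t_n)x\}$ produces a point of $\mathcal{A}$. For compactness, I would take an arbitrary sequence $\{y_k\}\subseteq\mathcal{A}$, write each $y_k$ as a limit $S(t^k_n)x^k_n\to y_k$ with $x^k_n\in\mathcal{B}$, and perform a diagonal selection: choosing indices $n(k)$ with $t^k_{n(k)}\to\infty$ and $\|S(t^k_{n(k)})x^k_{n(k)}-y_k\|\to 0$, asymptotic compactness forces a convergent subsequence of $\{S(t^k_{n(k)})x^k_{n(k)}\}$, whose limit agrees with the limit of the corresponding $y_k$ and again lies in $\mathcal{A}$. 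Hence $\mathcal{A}$ is sequentially compact, and therefore compact in the Polish space $X$.

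The invariance $S(t)\mathcal{A}=\mathcal{A}$ I would prove by two inclusions, and the reverse inclusion is where I expect the main obstacle. For $S(t)\mathcal{A}\subseteq\mathcal{A}$, if $y=\lim S(t_n)x_n\in\mathcal{A}$ then the semigroup and continuity properties give $S(t)y=\lim S(t+t_n)x_n$, and since $t+t_n\to\infty$ the limit lies in $\mathcal{A}$. For the harder inclusion $\mathcal{A}\subseteq S(t)\mathcal{A}$, given $y=\lim S(t_n)x_n$ I would write $S(t_n)x_n=S(t)S(t_n-t)x_n$ for $n$ large, invoke asymptotic compactness to extract a subsequence along which $S(t_n-t)x_n\to z$, check that $z\in\mathcal{A}$, and then use continuity of $S(t)$ to conclude $S(t)z=y$, so that $y\in S(t)\mathcal{A}$. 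The delicate point is the repeated passage to subsequences together with the verification that the intermediate limit $z$ again belongs to $\mathcal{A}$; this is precisely where asymptotic compactness, combined with the absorbing property that keeps all relevant orbits eventually inside $\mathcal{B}$, is indispensable.

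Finally I would verify the attraction property and the inclusion $\mathcal{A}\subseteq\mathcal{B}$. Attraction is argued by contradiction: if $\mathrm{dist}(S(t)B,\mathcal{A})$ failed to tend to $0$ for some bounded $B$, there would exist $\delta>0$, $t_n\to\infty$ and $x_n\in B$ with $\mathrm{dist}(S(t_n)x_n,\mathcal{A})\geq\delta$; since $\mathcal{B}$ absorbs $B$, for $n$ large $S(t_n)x_n=S(t_n-t_0)\bigl(S(t_0)x_n\bigr)$ with $S(t_0)x_n\in\mathcal{B}$, so asymptotic compactness yields a subsequence converging to a point of $\mathcal{A}$, contradicting $\delta>0$. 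The inclusion $\mathcal{A}\subseteq\mathcal{B}$ follows because, $\mathcal{B}$ being absorbing, one has $\bigcup_{t\geq s}S(t)\mathcal{B}\subseteq\mathcal{B}$ for $s$ sufficiently large, whence $\mathcal{A}\subseteq\overline{\mathcal{B}}=\mathcal{B}$ once $\mathcal{B}$ is taken closed. Assembling these four properties gives that $\mathcal{A}$ is the global attractor with the stated representation.
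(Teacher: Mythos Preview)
Your argument is correct and is essentially the standard proof one finds in Hale's monograph. Note, however, that the paper does not give its own proof of this lemma: it is quoted as a known result from \cite{HJ} and used as a black box, so there is no ``paper's proof'' to compare against beyond observing that your outline matches the classical treatment referenced there. Your only loose end is the inclusion $\mathcal{A}\subseteq\mathcal{B}$, which, as you correctly flag, requires $\mathcal{B}$ to be closed (otherwise one only gets $\mathcal{A}\subseteq\overline{\mathcal{B}}$); this is a wrinkle in the lemma's statement rather than a defect in your reasoning.
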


For a finite dimensional subspace $F$ of a Banach space $X$, denote by $B^F_r(x)$ the ball in $F$ of center $x$ and radius $r$, that is $B^F_r(x)=\{y \in F |\|y-x\|\leq r\}$. It is proved in \cite{30} that the following covering lemma of balls in finite dimensional Banach spaces is true.
\begin{lem}\label{lem2.2}
For every finite dimensional subspace $F$ of a Banach space $X$, we have
 \begin{equation}
N\left(r_1, B_{r_2}^F\right)\leq m 2^m\left(1+\frac{r_1}{r_2}\right)^m,
\end{equation}
for all $r_1>r_2>0$, where $m=\operatorname{dim} F$ and $N\left(r_1, B_{r_2}^F\right)$ is the minimum number of balls needed to cover  the ball of radius $r_1$ by balls of $B_{r_2}^F$  calculated in the Banach space $X$.
\end{lem}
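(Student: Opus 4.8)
The plan is to establish the (slightly stronger) covering bound $N(r_1,B_{r_2}^F)\le 2^m(1+r_1/r_2)^m$ by a classical volume comparison carried out inside the finite dimensional space $F$; since $m\ge 1$, this immediately yields the stated estimate $N(r_1,B_{r_2}^F)\le m2^m(1+r_1/r_2)^m$. The key preliminary observation is that, although $F$ is merely a subspace of the Banach space $X$, its finite dimensionality lets us equip it with a translation invariant volume. I would fix any linear isomorphism $T\colon\mathbb{R}^m\to F$ and let $\lambda$ be the pushforward of Lebesgue measure on $\mathbb{R}^m$. Then $\lambda$ is translation invariant on $F$; every ball $B^F_\rho(x)$ (measured in the norm of $X$ restricted to $F$) is a bounded convex set with nonempty interior, hence has finite and strictly positive $\lambda$-measure; and from $B^F_\rho(x)=x+\rho\,B^F_1(0)$ together with the scaling relation $\lambda(\rho A)=\rho^m\lambda(A)$ one obtains $\lambda\big(B^F_\rho(x)\big)=\rho^m\lambda\big(B^F_1(0)\big)$ for all $\rho>0$ and $x\in F$.

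Next I would introduce a maximal separated set. Since the closed ball $\overline{B^F_{r_1}(0)}$ is compact (here finite dimensionality is used), any subset whose points are pairwise at distance at least $r_2$ is finite, so we may choose such a set $\mathcal{N}=\{x_1,\dots,x_n\}\subseteq B^F_{r_1}(0)$ of maximal cardinality. This set enjoys three properties. First, by maximality the balls $B^F_{r_2}(x_i)$ cover $B^F_{r_1}(0)$: any $y\in B^F_{r_1}(0)$ must lie within distance $r_2$ of some $x_i$, for otherwise $\mathcal{N}\cup\{y\}$ would still be $r_2$-separated, contradicting maximality; hence $N(r_1,B_{r_2}^F)\le n$. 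Second, by the separation the open balls $B^F_{r_2/2}(x_i)$ are pairwise disjoint. Third, each of them is contained in $B^F_{r_1+r_2/2}(0)$.

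Combining these three facts with the scaling of $\lambda$ gives the volume comparison
\[
n\,\Big(\tfrac{r_2}{2}\Big)^m\lambda\big(B^F_1(0)\big)=\sum_{i=1}^n\lambda\big(B^F_{r_2/2}(x_i)\big)\le\lambda\big(B^F_{r_1+r_2/2}(0)\big)=\Big(r_1+\tfrac{r_2}{2}\Big)^m\lambda\big(B^F_1(0)\big),
\]
so that $n\le\big((2r_1+r_2)/r_2\big)^m=(1+2r_1/r_2)^m$. Finally the elementary inequality $1+2r_1/r_2\le 2(1+r_1/r_2)$ yields $N(r_1,B_{r_2}^F)\le n\le 2^m(1+r_1/r_2)^m\le m2^m(1+r_1/r_2)^m$, as claimed. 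The one point demanding genuine care, and the step I would flag as the main obstacle, is the measure-theoretic setup: one must be certain that a meaningful notion of volume exists on $F\subseteq X$ and that only \emph{ratios} of volumes of balls enter the argument, so that the arbitrary choice of the isomorphism $T$ (equivalently, the normalization of $\lambda(B^F_1(0))$) cancels. Finite dimensionality is exactly what makes this legitimate, and it is precisely the feature that fails for the ambient infinite dimensional space $X$.
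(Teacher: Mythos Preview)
Your argument is correct: the maximal $r_2$-separated set combined with the volume comparison in the finite dimensional subspace $F$ is the standard route to this covering bound, and every step is justified. Note, however, that the paper does not give its own proof of this lemma at all; it simply states the result and cites Ma\~{n}\'{e} \cite{30}. Your proof is essentially the classical one found there (and in many textbooks), so there is nothing to compare beyond observing that you have supplied the details the paper omits by reference.
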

\section{Uniform estimates of solutions}
Let $u^\phi \in C([-\tau, \infty), \mathbb{X})$ be the  global solution obtained in Lemma \ref{lem2.1}. Define the dynamical system $\Phi: \mathbb{R}_+\times \mathcal{C}\rightarrow \mathcal{C}$ generated by the solution of \eqref{1}   as $\Phi(t, \phi)=u_t^\phi$. The following lemma shows that the dynamical system $\Phi$ possesses an absorbing set.
\begin{lem}\label{lem2.2}
Assume that  $\sigma (L_f+1) e^{\mu \tau}-\mu<0$. Then,  the set $\mathcal{K} \triangleq \{ \psi\in \mathcal{C}|\|\psi\|_{\mathcal{C}} \leq2(\frac{\|g\|}{\mu}+\frac{\|g\|\beta}{\mu(\mu-\beta)})\}$ is an absorbing set of the dynamical system $\Phi$ generated by \eqref{1}. That is, for any bounded set $\mathcal{D}\subseteq \mathcal{C}$, there is a $T_{\mathcal{D}}>0$ such that
$$
 \Phi\left(t, \mathcal{D}\right)\subseteq \mathcal{K}
$$
for all $t>T_{\mathcal{D}}$.
\end{lem}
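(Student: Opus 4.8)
The plan is to reduce the whole statement to a scalar delay integral inequality for the quantity $y(t):=\|u(t)\|$ and then to close it with a Gronwall/Halanay-type comparison argument. Starting from the mild formulation \eqref{2.2a}, I would take the $\mathbb{X}$-norm of both sides and apply the contraction bound $\|S(t)\phi\|\le e^{-\mu t}\|\phi\|$ of Lemma~\ref{lem2.1a}(i) to every term, while $\int_0^t e^{-\mu(t-s)}\,\mathrm ds\le \mu^{-1}$ controls the forcing $g$. This gives
\[
y(t)\le e^{-\mu t}\|\phi\|_{\mathcal C}+\int_0^t e^{-\mu(t-s)}\big(\sigma\,y(s-\tau)+\|f(u(s-\tau))\|\big)\,\mathrm ds+\frac{\|g\|}{\mu},\qquad t\ge 0,
\]
where $\|f(u(s-\tau))\|\le L_f\,y(s-\tau)$ by $\mathbf{Hypothesis\ A1}$ together with $f(\mathbf 0)=0$, and $y(s)\le\|\phi\|_{\mathcal C}$ on the initial interval $[-\tau,0]$. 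Lemma~\ref{lemma4.2} guarantees $y$ is finite and continuous on each $[-\tau,T]$, so the inequality is meaningful.

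The decisive manoeuvre is the treatment of the delayed convolution. Substituting $s\mapsto s-\tau$ and using $e^{-\mu(t-s)}=e^{\mu\tau}e^{-\mu(t-(s-\tau))}$ extracts a factor $e^{\mu\tau}$; enlarging the integration range from $[-\tau,t-\tau]$ up to $[-\tau,t]$ (licit because the integrand is nonnegative, and this is precisely the deliberately non-sharp step that prevents $e^{\mu\tau}$ from cancelling) yields
\[
y(t)\le e^{-\mu t}\|\phi\|_{\mathcal C}+\beta\int_{-\tau}^{t}e^{-\mu(t-s)}y(s)\,\mathrm ds+\frac{\|g\|}{\mu},\qquad \beta:=\sigma(L_f+1)e^{\mu\tau},
\]
so that the standing assumption is exactly $\beta<\mu$. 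The equilibrium amplitude $A$ of this renewal inequality solves $A=\tfrac{\|g\|}{\mu}+\tfrac{\beta}{\mu}A$, hence $A=\tfrac{\|g\|}{\mu-\beta}=\tfrac{\|g\|}{\mu}+\tfrac{\|g\|\beta}{\mu(\mu-\beta)}$, which is exactly one half of the radius defining $\mathcal K$.

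To turn the integral inequality into a genuine decay estimate I would invoke a comparison/Gronwall lemma for delay integral inequalities: testing the ansatz $y(t)\le Ce^{-\gamma t}+A$, the constant parts balance precisely when $A=\|g\|/(\mu-\beta)$, while the exponential parts stay consistent for every $0<\gamma\le\mu-\beta$ with $C=C(\|\phi\|_{\mathcal C})$ fixed by the initial segment (equivalently, one may differentiate and apply a Halanay inequality). Passing from the pointwise norm to the segment norm costs only a harmless factor $e^{\gamma\tau}$:
\[
\|u_t\|_{\mathcal C}=\sup_{\theta\in[-\tau,0]}\|u(t+\theta)\|\le C(\|\phi\|_{\mathcal C})\,e^{\gamma\tau}e^{-\gamma t}+\frac{\|g\|}{\mu-\beta}.
\]

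Finally, for any bounded $\mathcal D\subseteq\mathcal C$ the constant $C(\|\phi\|_{\mathcal C})$ is uniformly bounded over $\mathcal D$, so there is $T_{\mathcal D}>0$ beyond which the exponential term drops below $\|g\|/(\mu-\beta)$; then for $t>T_{\mathcal D}$ we obtain $\|u_t\|_{\mathcal C}\le 2\big(\tfrac{\|g\|}{\mu}+\tfrac{\|g\|\beta}{\mu(\mu-\beta)}\big)$, i.e. $\Phi(t,\mathcal D)\subseteq\mathcal K$, as claimed. I expect the main obstacle to be the comparison step itself in the supremum-norm Banach setting: unlike the Hilbert case one cannot simply test with $u$ and run a clean energy/Halanay estimate, so the delay integral inequality and its asymptotic amplitude must be justified directly, and one must keep track of the non-sharp convolution enlargement responsible for the $e^{\mu\tau}$ in $\beta$. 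A preliminary global-in-time boundedness bound, for instance via the running supremum $R(t)=\sup_{-\tau\le\xi\le t}y(\xi)$, is convenient to ensure all quantities are finite before the comparison is applied.
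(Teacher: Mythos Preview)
Your proposal is essentially the same approach as the paper's: start from the mild formulation \eqref{2.2a}, take the $\mathbb X$-norm, use the semigroup contraction of Lemma~\ref{lem2.1a}(i) and the Lipschitz bound on $f$, extract the factor $e^{\mu\tau}$ from the delayed convolution to produce $\beta=\sigma(L_f+1)e^{\mu\tau}$, and close by a Gr\"onwall argument.  The only organisational difference is that the paper first shifts to $t+\xi$, $\xi\in[-\tau,0]$, takes the supremum, and applies the ordinary Gr\"onwall inequality to $e^{\mu t}\|u_t\|_{\mathcal C}$, whereas you stay with the pointwise norm, absorb the delay by the substitution $s\mapsto s-\tau$, and defer the passage to $\|u_t\|_{\mathcal C}$ to the end; once your substitution is done the inequality is a standard Volterra (not delay) inequality, so the ordinary Gr\"onwall lemma already suffices and there is no need to invoke a Halanay/comparison lemma.
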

\begin{proof}
It follows from \eqref{2.2a}, Lemma \ref{lem2.1a} and boundedness of $f$ that
\begin{equation}\label{2.4}
\begin{aligned}
\left\|u(t)\right\|\leq &\left\| S(t)\phi(0) \right\|+\|\sigma \int_{0}^{t}S(t-s) u(s-\tau) \mathrm{d}s\|+\|\int_{0}^{t}S(t-s)[ f(u(s-\tau,\cdot))+g]\mathrm{d}s\| \\
\leq & e^{-\mu t}\left\|\phi(0)\right\|+ \sigma \int_{0}^{t} e^{-\mu(t-s) } \|u(s-\tau)\| \mathrm{d} s+\int_{0}^{t} e^{-\mu(t-s) } [L_f\|u(s-\tau,\cdot)\|+\|g\|] \mathrm{d} s\\
\leq & e^{-\mu t}\left\|\phi(0)\right\|+\sigma \int_{0}^{t} e^{-\mu(t-s) }(L_f+1)\|u(s-\tau)\| \mathrm{d} s+\frac{\|g\|(1-e^{-\mu t})}{\mu}.
\end{aligned}
\end{equation}
Thus, for $\xi\in [-\tau, 0]$, we have
\begin{equation}\label{2.5}
\begin{aligned}
\left\|u(t+\xi)\right\|
\leq & e^{-\mu (t+\xi)}\left\|\phi(0)\right\|+\sigma \int_{0}^{t+\xi}(L_f+1) e^{-\mu(t+\xi-s) } \|u(s-\tau)\| \mathrm{d} s+\frac{\|g\|(1-e^{-\mu (t-\xi)})}{\mu}\\
\leq & e^{\mu \tau}e^{-\mu t }\left\|\phi(0)\right\|+\sigma(L_f+1) e^{\mu \tau}\int_{0}^{t} e^{-\mu(t-s) } \|u(s-\tau)\| \mathrm{d} s+\frac{\|g\| }{\mu}.
\end{aligned}
\end{equation}
Keep in mind that $\left\|u_{t}\right\|_{\mathcal{C}}=\sup \left\{\|u(t+\xi)\|: \xi \in[-\tau, 0]\right\},$ by which we have
\begin{equation}\label{2.6}
\begin{aligned}
\left\|u_t\right\|_{\mathcal{C}}\leq & e^{\mu \tau}e^{-\mu t }\left\|\phi\right\|_{\mathcal{C}}+\sigma(L_f+1)  e^{\mu \tau}\int_{0}^{t} e^{-\mu(t-s) } \|u_s\|_{\mathcal{C}} \mathrm{d} s+\frac{\|g\|}{\mu}.
\end{aligned}
\end{equation}
Multiply both sides of \eqref{2.6} by $e^{\mu t}$ gives
\begin{equation}\label{2.7}
\begin{aligned}
 e^{\mu t}\left\|u_t \right\|\leq & e^{\mu \tau}\left\|\phi\right\|_{\mathcal{C}}+\sigma (L_f+1) e^{\mu \tau}\int_{0}^{t} e^{ \mu s } \|u_s\|_{\mathcal{C}} \mathrm{d} s+\frac{Me^{\mu t}}{\mu}.
\end{aligned}
\end{equation}
Applying Gr\"{o}nwall's inequality yields
\begin{equation}\label{2.8}
\begin{aligned}
e^{\mu t} \left\|u_t\right\|_{\mathcal{C}}\leq &(\frac{\|g\|}{\mu}e^{\mu t}+e^{\mu \tau}\left\|\phi\right\|_{\mathcal{C}})+\frac{\|g\|\beta e^{\beta t}}{\mu(\mu-\beta)}[e^{(\mu-\beta)t}-1]+
 (e^{\beta t}-1) e^{\mu \tau} \left\|\phi\right\|_{\mathcal{C}},
\end{aligned}
\end{equation}
where $\beta=\sigma (L_f+1) e^{\mu \tau}$. Hence,
\begin{equation}\label{2.9}
\begin{aligned}
  \left\|u_t\right\|_{\mathcal{C}}\leq & (\frac{\|g\|}{\mu}+e^{\mu (\tau-t)}\left\|\phi\right\|_{\mathcal{C}})+\frac{\|g\|\beta}{\mu(\mu-\beta)}[1-e^{-(\mu-\beta)t}]+
 e^{\mu \tau} \left\|\phi\right\|_{\mathcal{C}}e^{(\beta-\mu) t}\\
 \leq& (\frac{\|g\|}{\mu}+\frac{\|g\|\beta}{\mu(\mu-\beta)})+(e^{\mu (\tau-t)}\left\|\phi\right\|_{\mathcal{C}}+e^{\mu \tau} \left\|\phi\right\|_{\mathcal{C}}e^{(\beta-\mu) t}).
 \end{aligned}
\end{equation}
Since we have assumed that $\sigma (L_f+1) e^{\mu \tau}-\mu<0$, one can see, for any bounded set $ \mathcal{D} \subset \mathcal{C}$, there exists a $T_{\mathcal{D}}>0$ such that for all $t\geq T_{\mathcal{D}}$
\begin{equation}\label{2.9}
\begin{aligned}
 e^{\mu (\tau-t)}\left\|\phi\right\|_{\mathcal{C}}+e^{\mu \tau} \left\|\phi\right\|_{\mathcal{C}}e^{(\beta-\mu) t}\leq \frac{\|g\|}{\mu}+\frac{\|g\|\beta}{\mu(\mu-\beta)},
 \end{aligned}
\end{equation}
indicating that
\begin{equation}\label{2.10}
\begin{aligned}
\left\|u_t\right\|_{\mathcal{C}} \leq & 2(\frac{\|g\|}{\mu}+\frac{\|g\|\beta}{\mu(\mu-\beta)}).
\end{aligned}
\end{equation}
That is, $\mathcal{K}$ is an absorbing set for $\Phi$.
This completes the proof.
\end{proof}

Next, we establish some estimations of the integral of the solution $u^\phi_{t}(\cdot)$ to equation \eqref{1} in $\mathcal{C}^{1}$, which is important to obtain estimations on $u^\phi_{t}(\cdot)$ in $\mathcal{C}^{1}$.

\begin{lem}\label{lem2.3}  Assume that $\mathbf{Hypothesis \  A1}$ and assumptions of Lemma \ref{lem2.2} hold and let $T_{\mathcal{D}}$ be defined in Lemma \ref{lem2.2}. Then,  the solution $u^\phi_{t}(\cdot)$ to equation \eqref{1}  satisfies
\begin{equation}\label{3.15}
\begin{aligned}
\int_{t}^{t+1}\left\|u^\phi_{s}\left(\cdot\right)\right\|_{\mathcal{C}^{1}}^{2} \mathrm{d} s \leq  c_4,
\end{aligned}
\end{equation}
for all $t>T_{\mathcal{D}}$, where $c_4=\frac{1}{2}c_2\|\phi(0)\|^{2}+\frac{(\sigma+L_{f}^{2})c_3^2}{\mu-\sigma-1}+ \frac{c_1}{2}$, $c_1=\frac{1}{\mu-\sigma-1}(\frac{1}{\mu} \| g \|^{2}+2\| f(0) \|^{2})$, $c_2= e^{(\mu-\sigma-1)\tau}$ and $c_3=2(\frac{\|g\|}{\mu}+\frac{\|g\|\beta}{\mu(\mu-\beta)})$.
\end{lem}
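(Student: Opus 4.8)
The plan is to run an $L^2$ energy estimate at the gradient level, turn it into a retarded differential inequality, integrate it with the help of the absorbing set of Lemma \ref{lem2.2}, and finally upgrade the resulting time-averaged bound to the supremum norm over the delay window. First I would pair \eqref{1} with $u(\cdot,t)$ in the inner product of $\mathbb{X}$ and integrate by parts, using $(\Delta u,u)=-\|u\|_{\mathbb{X}^1}^2$, to obtain
\begin{equation*}
\frac{1}{2}\frac{d}{dt}\|u(t)\|^2+\|u(t)\|_{\mathbb{X}^1}^2+\mu\|u(t)\|^2=\sigma\left(u(t-\tau),u(t)\right)+\left(f(u(t-\tau)),u(t)\right)+\left(g,u(t)\right).
\end{equation*}
This isolates exactly the quantity $\|u(t)\|_{\mathbb{X}^1}^2$ that has to be integrated in time, at the price of the time derivative of $\|u\|^2$ and the retarded reaction terms.

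Next I would estimate the right-hand side by Cauchy--Schwarz and Young's inequality, splitting each product so that a fraction of $\|u(t)\|^2$ is absorbed into the coercive term $\mu\|u(t)\|^2$. Invoking Hypothesis A1 and $f(\mathbf{0})=0$ to write $\|f(u(t-\tau))\|\le L_f\|u(t-\tau)\|$, the delayed linear and nonlinear products together contribute a term in $\|u(t-\tau)\|^2$ with coefficient $\tfrac12(\sigma+L_f^2)$, while $(g,u)$ yields $\tfrac{1}{2\mu}\|g\|^2$. Collecting the coefficients of $\|u(t)\|^2$ produces the retarded differential inequality
\begin{equation*}
\frac{d}{dt}\|u(t)\|^2+2\|u(t)\|_{\mathbb{X}^1}^2+(\mu-\sigma-1)\|u(t)\|^2\le(\sigma+L_f^2)\|u(t-\tau)\|^2+(\mu-\sigma-1)c_1,
\end{equation*}
in which the coercivity constant $\mu-\sigma-1$ (which I take to be positive, an assumption implicit in the definition of $c_4$ and not automatic from the hypotheses of Lemma \ref{lem2.2}) and the combination $\sigma+L_f^2$ featuring in $c_4$ make their appearance.

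I would then bring in Lemma \ref{lem2.2}: for $t>T_{\mathcal{D}}$ the absorbing estimate gives $\|u(s-\tau)\|\le c_3$, so the retarded forcing is controlled by $c_3^2$. Multiplying by the integrating factor $e^{(\mu-\sigma-1)t}$, the change of variable $s\mapsto s-\tau$ in the delayed term is what generates the factor $c_2=e^{(\mu-\sigma-1)\tau}$ and the initial contribution measured by $\|\phi(0)\|^2$, while the constant forcing integrates to the stationary level $\tfrac{(\sigma+L_f^2)c_3^2}{\mu-\sigma-1}+c_1$. Dropping the nonnegative coercive term and integrating once more over $[t,t+1]$, so that $\tfrac{d}{dt}\|u\|^2$ telescopes and $\|u(t+1)\|^2\ge0$ is discarded, assembles the constant $c_4=\tfrac12 c_2\|\phi(0)\|^2+\tfrac{(\sigma+L_f^2)c_3^2}{\mu-\sigma-1}+\tfrac{c_1}{2}$ as an upper bound for the time integral of $\|u(s)\|_{\mathbb{X}^1}^2$.

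The step I expect to be the main obstacle is the final upgrade to the $\mathcal{C}^1$-norm, since $\|u_s^\phi\|_{\mathcal{C}^1}^2=\sup_{\theta\in[-\tau,0]}\|u(s+\theta)\|_{\mathbb{X}^1}^2$ is a running supremum over the delay window and cannot be dominated pointwise by $\|u(s)\|_{\mathbb{X}^1}^2$, nor can $\int_t^{t+1}$ of such a supremum be bounded by the same integral of $\|u\|_{\mathbb{X}^1}^2$ in general. To close this I would not compare the supremum with the integral directly; instead I would propagate the estimate over the enlarged window $[t-\tau,t+1]$ and use the differential inequality itself to control $\sup_{r\in[s-\tau,s]}\|u(r)\|_{\mathbb{X}^1}^2$ uniformly for $s\in[t,t+1]$, which is exactly the place where the factor $c_2=e^{(\mu-\sigma-1)\tau}$ and the term $\tfrac12 c_2\|\phi(0)\|^2$ enter. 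Arranging the window constants so that they collapse into precisely $c_4$, rather than merely a bound of the same algebraic shape, is the delicate bookkeeping that the proof must carry out.
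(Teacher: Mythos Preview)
Your proposal is correct and follows essentially the same route as the paper: pair \eqref{1} with $u$ to extract the gradient term, derive the retarded differential inequality with coercivity constant $\mu-\sigma-1$ and forcing coefficient $\sigma+L_f^2$, feed in the absorbing bound $c_3$ from Lemma~\ref{lem2.2}, and then handle the passage from $\|u(\cdot)\|_{\mathbb{X}^1}^2$ to $\|u_s\|_{\mathcal{C}^1}^2$ by running the Gr\"onwall step on the $\zeta$-shifted interval $[T_1+\zeta,t+\zeta]$, $\zeta\in[-\tau,0]$, and changing variables---which is precisely the ``enlarged window'' maneuver you describe and is exactly where the paper produces the factor $c_2=e^{(\mu-\sigma-1)\tau}$ and the term $\tfrac12 c_2\|\phi(0)\|^2$. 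The only cosmetic difference is that the paper keeps the segment norm $\|u_t\|_{\mathcal{C}}$ throughout rather than writing $\|u(t-\tau)\|$, but since here $f$ acts through $u(\cdot,t-\tau)$ this amounts to the same estimate.
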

\begin{proof}
Taking inner product of each term of \eqref{1} with $u(t)$ in $\mathbb{X}$, we obtain
\begin{equation}\label{3.1}
\begin{aligned}
& \frac{1}{2} \frac{\mathrm{d}\|u(t)\|^{2}}{\mathrm{d} t}=-\|\nabla u(t)\|^{2}-\mu \|u(t)\|^{2}+\int_{\mathbb{R}^N} \sigma u(t-\tau) u(t) \mathrm{d}x +\int_{\mathbb{R}^N} f(u(t-\tau)) u(t) \mathrm{d}x+\int_{\mathbb{R}^N} g u(t) \mathrm{d}x.
\end{aligned}
\end{equation}
We now estimate each term on the right-hand side of \eqref{3.1}.
By the basic inequality $2ab\leq a^2+b^2$ and  $\mathbf{Hypothesis \    A1}$,  we deduce
\begin{equation}\label{3.2a}
\begin{aligned}
\sigma \int_{\mathbb{R}^N}u(t-\tau)u(t) \mathrm{d} x & \leq \sigma[\frac{1}{2} \| u_t \|_{\mathcal{C}}^{2} +\frac{1}{2}\|u(t)\|^{2}],
\end{aligned}
\end{equation}
\begin{equation}\label{3.2b}
\begin{aligned}
\int_{\mathbb{R}^N}gu(t) \mathrm{d} x & \leq  \frac{1}{2\mu} \| g \|^{2} +\frac{\mu}{2}\|u(t)\|^{2}
\end{aligned}
\end{equation}
and
\begin{equation}\label{3.2}
\begin{aligned}
\int_{\mathbb{R}^N} f(u_t) u(t) \mathrm{d} x & \leq \frac{1}{2} \|f(u_t)\|^{2} +\frac{1}{2}\|u(t)\|^2 \\
& \leq \frac{1}{2}L_{f}^{2} \| u_t \|_{\mathcal{C}}^{2}+\frac{1}{2}\|u(t)\|^2.
\end{aligned}
\end{equation}
Therefore, by incorporating  \eqref{3.2a}-\eqref{3.2} into \eqref{3.1}, we are led to
\begin{equation}\label{3.4}
\begin{aligned}
\frac{\mathrm{d}\|u(t)\|^2}{\mathrm{d} t} & \leq -2\|\nabla u(t)\|^2+(\sigma-\mu+1) \|u(t)\|^2+ (\sigma+L_{f}^{2})\| u_t \|_{\mathcal{C}}^{2}+\frac{1}{2\mu} \| g \|^{2}\\
& \leq (\sigma-\mu+1)\|u(t)\|^2+  (\sigma+L_{f}^{2})\| u_t \|_{\mathcal{C}}^{2}+\frac{1}{\mu} \| g \|^{2}.
\end{aligned}
\end{equation}
Thanks to the  Gr{o}nwall inequality, we find, for all $t \geq 0$,
\begin{equation}\label{3.5}
\begin{aligned}
\|u(t)\|^{2} \leq e^{(\sigma-\mu+1) t}\|\phi(0)\|^{2}+  (\sigma+L_{f}^{2})\int_{0}^{t} e^{(\sigma-\mu+1) (t-s)}\| u_s \|_{\mathcal{C}}^{2}\mathrm{d}s+c_1,
\end{aligned}
\end{equation}
where $c_1=\frac{ \| g \|^{2}}{\mu(\mu-\sigma-1)}$. This implies, for all  $t\in[0, \infty)$ and $\zeta \in[-\tau, 0]$,
\begin{equation}\label{3.6}
\begin{aligned}
\|u(t+\zeta)\|^{2}& \leq e^{(\sigma-\mu+1) (t+\zeta)}\|\phi(0)\|^{2}+  (\sigma+L_{f}^{2})\int_{0}^{t+\zeta} e^{(\sigma-\mu+1) (t+\zeta-s)}\| u_s \|_{\mathcal{C}}^{2}\mathrm{d}s+c_1\\
& \leq e^{(\sigma-\mu+1) (t-\tau)}\|\phi(0)\|^{2}+  (\sigma+L_{f}^{2})e^{(\mu-\sigma-1)\tau}\int_{0}^{t} e^{(\sigma-\mu+1) (t-s)}\| u_s \|_{\mathcal{C}}^{2}\mathrm{d}s+c_1.
\end{aligned}
\end{equation}
Keeping in mind that $\left\|u_{t}\right\|_{\mathcal{C}}=\sup \left\{\|u(t+\xi)\|: \xi \in[-\tau, 0]\right\},$ we obtain
\begin{equation}\label{3.6a}
\begin{aligned}
\|u_t\|_{\mathcal{C}}^{2}& \leq e^{(\sigma-\mu+1) (t-\tau)}\|\phi(0)\|^{2}+  (\sigma+L_{f}^{2})e^{(\mu-\sigma-1)\tau}\int_{0}^{t} e^{(\sigma-\mu+1) (t-s)}\| u_s \|_{\mathcal{C}}^{2}\mathrm{d}s+c_1.
\end{aligned}
\end{equation}
Let $T_{1}\geq T_{\mathcal{D}}+\tau$ and replace $t$ by $T_{1}$ in \eqref{3.6a}. We then have
\begin{equation}\label{2.14}
\begin{aligned}
\|u_{T_{1}}\|_{\mathcal{C}}^{2}& \leq e^{(\sigma-\mu+1) (T_{1}-\tau)}\|\phi(0)\|^{2}+  (\sigma+L_{f}^{2})e^{(\mu-\sigma-1)\tau}\int_{0}^{T_{1}} e^{(\sigma-\mu+1) (T_{1}-s)}\| u_s \|_{\mathcal{C}}^{2}\mathrm{d}s+c_1.
\end{aligned}
\end{equation}
For every $T_{1} \geq T_{\mathcal{D}}$ and all $t \geq T_{1}$, multiplying \eqref{2.14} by $e^{(\mu-\sigma-1)\left(T_{1}-t\right)}$ implies
\begin{equation}\label{2.15}
\begin{aligned}
e^{(\mu-\sigma-1)\left(T_{1}-t\right)}\|u_{T_{1}}\|_{\mathcal{C}}^{2}& \leq e^{(\sigma-\mu+1) (t-\tau)}\|\phi(0)\|^{2}+  (\sigma+L_{f}^{2})e^{(\mu-\sigma-1)\tau}\int_{0}^{T_{1}} e^{(\sigma-\mu+1) (t-s)}\| u_s \|_{\mathcal{C}}^{2}\mathrm{d}s+c_1.
\end{aligned}
\end{equation}
Applying  Gr{o}nwall's inequality to \eqref{3.4} on the interval $\left[T_{1}+\zeta, t+\zeta\right], \zeta \in[-\tau, 0]$ for $t \geq T_{1},$ we obtain
\begin{equation}\label{2.16}
\begin{aligned}
&\|u(t+\zeta)\|^{2}+2\int_{T_{1}+\zeta}^{t+\zeta} e^{(\mu-\sigma-1) (s-t-\zeta)}\|\nabla u(s)\|^{2} \mathrm{d} s\\
\leq & e^{(\mu-\sigma-1)\left(T_{1}-t\right)}\left\|u\left(T_{1}+\zeta\right)\right\|^{2}+(\sigma+L_{f}^{2})\int_{T_{1}+\zeta}^{t+\zeta} e^{(\mu-\sigma-1)(s-t-\zeta)}\|u(s-\tau)\|^{2} \mathrm{d} s+\frac{c_1}{\mu-\sigma-1}e^{(\mu-\sigma-1)\tau}.
\end{aligned}
\end{equation}
By a change of variable in the second term on the left hand side of \eqref{2.16} and keeping in mind that $\left\|u_{t}\right\|_{\mathcal{C}}=\sup \left\{\|u(t+\xi)\|: \xi \in[-\tau, 0]\right\}$,  we deduce
\begin{equation}\label{2.17}
\begin{aligned}
&\|u_t\|_\mathcal{C}^{2}+2\int_{T_{1}}^{t} e^{(\mu-\sigma-1) (s-t)}\|u_s\|_{\mathcal{C}^1}^{2} \mathrm{d} s\\
\leq & e^{(\mu-\sigma-1)\left(T_{1}-t\right)}\left\|u_{T_{1}}\right\|_\mathcal{C}^{2}+(\sigma+L_{f}^{2})\int_{T_{1}-\tau}^{t} e^{(\mu-\sigma-1)(s-t-\tau)}\|u_s\|_\mathcal{C}^{2} \mathrm{d} s+\frac{c_1}{\mu-\sigma-1}e^{(\mu-\sigma-1)\tau}.
\end{aligned}
\end{equation}
It follows from Lemma \ref{lem2.2} that
\begin{equation}\label{2.18}
\begin{aligned}
(\sigma+L_{f}^{2})\int_{T_{1}-\tau}^{t} e^{(\mu-\sigma-1)(s-t-\tau)}\|u_s\|_\mathcal{C}^{2} \mathrm{d} s
\leq & (\sigma+L_{f}^{2})c_2^2\int_{T_{1}-\tau}^{t} e^{(\mu-\sigma-1)(s-t-\tau)}\mathrm{d} s\\
 \leq & (\sigma+L_{f}^{2})\frac{c_3^2}{\mu-\sigma-1},
\end{aligned}
\end{equation}
with $c_3=2(\frac{M}{\mu}+\frac{M\beta}{\mu(\mu-\beta)})$ and from \eqref{2.15}, \eqref{2.17} and \eqref{2.18} we derive that
\begin{equation}\label{2.19}
\begin{aligned}
&\int_{T_{1}}^{t} e^{(\mu-\sigma-1) (s-t)}\|u_s\|_{\mathcal{C}^1}^{2} \mathrm{d} s
\leq & \frac{1}{2}e^{(\sigma-\mu+1) (t-\tau)}\|\phi(0)\|^{2}+(\sigma+L_{f}^{2})\frac{c_3^2}{\mu-\sigma-1}+\frac{c_1}{2}.
\end{aligned}
\end{equation}
By replacing $t$ by $t+1$ and $T_{1}$ by $t $ in \eqref{2.19} for $s \in[t , t+1]$ and $t \geq T_{\mathcal{D}}$, we have
\begin{equation}\label{2.20}
\begin{aligned}
&\int_{t}^{t+1} e^{(\mu-\sigma-1) (s-t)}\|u_s\|_{\mathcal{C}^1}^{2} \mathrm{d} s
\leq & \frac{1}{2}e^{(\mu-\sigma-1)\tau }\|\phi(0)\|^{2}+(\sigma+L_{f}^{2})\frac{c_3^2}{\mu-\sigma-1}+\frac{c_1}{2},
\end{aligned}
\end{equation}
indicating
\begin{equation}\label{2.20a}
\begin{aligned}
&\int_{t}^{t+1} \|u_s\|_{\mathcal{C}^1}^{2} \mathrm{d} s
\leq & \frac{1}{2}e^{(\mu-\sigma-1)\tau }\|\phi(0)\|^{2}+(\sigma+L_{f}^{2})\frac{c_3^2}{\mu-\sigma-1}+\frac{c_1}{2}.
\end{aligned}
\end{equation}
 This completes the proof.
\end{proof}

The following result provides a uniform estimate for $u$ in $\mathcal{C}^1$.

\begin{lem}\label{lem2.4}
Assume that $\mathbf{Hypothesis \  A1}$ and assumptions of Lemma \ref{lem2.2} hold and $T_{\mathcal{D}}$ is defined in Lemma \ref{lem2.2}. Then, for all $t \geq T_{\mathcal{D}}$, we have
\begin{equation}\label{2.25}
\begin{aligned}
\left\|u_{t}^\phi\left(\cdot \right)\right\|_{\mathcal{C}^1}^{2}\leq c_4+2(\sigma^2+L_{f}^{2})c_3^2+2 \|g\|^2,
\end{aligned}
\end{equation}
where $c_3$ and $c_4$ are defined in Lemma \ref{lem2.3}.
\end{lem}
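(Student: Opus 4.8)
The plan is to promote the time-integrated $\mathcal{C}^1$ estimate of Lemma \ref{lem2.3} into a pointwise-in-time bound by a uniform Gr\"onwall argument. Set $y(t):=\|\nabla u(t)\|^2=\|u(t)\|_{\mathbb{X}^1}^2$. The first step is to produce a differential inequality for $y$. I would test equation \eqref{1} with $-\Delta u(t)$ in $\mathbb{X}$ (legitimate for the strong solution of Lemma \ref{lemma4.2}(ii), the general case following by the usual Galerkin/density approximation together with the integration by parts justified by decay at infinity). Using $(\partial_t u,-\Delta u)=\tfrac12\tfrac{d}{dt}\|\nabla u\|^2$, $(\Delta u,-\Delta u)=-\|\Delta u\|^2$ and $(-\mu u,-\Delta u)=-\mu\|\nabla u\|^2$, this yields
\[
\tfrac12\tfrac{d}{dt}\|\nabla u\|^2=-\|\Delta u\|^2-\mu\|\nabla u\|^2+\sigma(u(t-\tau),-\Delta u)+(f(u(t-\tau)),-\Delta u)+(g,-\Delta u).
\]

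Second, I would estimate the last three terms by Cauchy--Schwarz and then Young's inequality in the form $ab\le \tfrac14 a^2+b^2$ with $a=\|\Delta u\|$, so that the three resulting $\|\Delta u\|^2$ contributions sum to at most $\tfrac34\|\Delta u\|^2$ and are absorbed by the dissipative term $-\|\Delta u\|^2$. Here $\mathbf{Hypothesis\ A1}$ together with $f(\mathbf{0})=0$ gives $\|f(u(t-\tau))\|\le L_f\|u_{t-\tau}\|_{\mathcal{C}}$. This leaves
\[
\tfrac{d}{dt}y(t)\le-2\mu\,y(t)+2\sigma^2\|u(t-\tau)\|^2+2L_f^2\|u_{t-\tau}\|_{\mathcal{C}}^2+2\|g\|^2,
\]
and since $-2\mu\,y\le 0$ the good term may simply be discarded, leaving $\tfrac{d}{dt}y\le H(t)$ with forcing $H$ collecting the remaining three terms.

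Third, for $s$ beyond the absorbing time I would bound the delayed terms by Lemma \ref{lem2.2}: since $-\tau\in[-\tau,0]$ one has $\|u(s-\tau)\|\le\|u_s\|_{\mathcal{C}}\le c_3$, and likewise $\|u_{s-\tau}\|_{\mathcal{C}}\le c_3$, so $H(s)\le 2(\sigma^2+L_f^2)c_3^2+2\|g\|^2=:a_2$. Then I carry out the uniform Gr\"onwall step directly: for $s\in[t,t+1]$, integrating $\tfrac{d}{dt}y\le H$ gives $y(t+1)\le y(s)+a_2$, and averaging this over $s\in[t,t+1]$ gives $y(t+1)\le\int_t^{t+1}y(s)\,ds+a_2$. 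The key input is that $y(s)=\|\nabla u(s)\|^2\le\|u_s\|_{\mathcal{C}^1}^2$, so $\int_t^{t+1}y(s)\,ds\le\int_t^{t+1}\|u_s\|_{\mathcal{C}^1}^2\,ds\le c_4$ by Lemma \ref{lem2.3}; hence $y(t+1)\le c_4+a_2$, uniformly in the base point, for all $t\ge T_{\mathcal{D}}$.

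Finally, since $\|u_t\|_{\mathcal{C}^1}^2=\sup_{\xi\in[-\tau,0]}\|\nabla u(t+\xi)\|^2=\sup_{\xi\in[-\tau,0]}y(t+\xi)$, the uniform pointwise bound on $y$ transfers to the supremum and delivers exactly $\|u_t^\phi\|_{\mathcal{C}^1}^2\le c_4+2(\sigma^2+L_f^2)c_3^2+2\|g\|^2$. I expect the main obstacle to be precisely this passage from the averaged bound of Lemma \ref{lem2.3} to a genuine pointwise bound: one must arrange the Young constants so that the top-order quantity $\|\Delta u\|^2$ is \emph{fully} absorbed, so that no uncontrolled $\mathbb{X}^1$-norm of the delayed datum enters the forcing, and then feed the time-integral of $y$ (rather than a pointwise initial value, which is unavailable) into the uniform Gr\"onwall lemma, all while correctly accounting for the supremum that defines the $\mathcal{C}^1$-norm.
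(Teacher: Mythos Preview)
Your proposal is correct and follows essentially the same approach as the paper: test \eqref{1} with $-\Delta u$, use Young's inequality to fully absorb the $\|\Delta u\|^2$ contributions, obtain the differential inequality $\frac{d}{dt}\|\nabla u\|^2\le 2(\sigma^2+L_f^2)\|u(t-\tau)\|^2+2\|g\|^2$, and then run a uniform Gr\"onwall argument (integrate over a subinterval of length $\le 1$ and average the base point) feeding in the absorbing bound $c_3$ from Lemma~\ref{lem2.2} and the integral bound $c_4$ from Lemma~\ref{lem2.3}. The only cosmetic differences are your Young splitting ($\tfrac14+\tfrac14+\tfrac14$ versus the paper's $\tfrac14+\tfrac12+\tfrac14$) and that the paper carries the shift $\zeta\in[-\tau,0]$ through the integration from the outset rather than taking the sup at the end; neither affects the argument or the constants.
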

\begin{proof} Taking the inner product of \eqref{1} with $-\Delta v$ in $L^{2}(\mathbb{R}^N),$
\begin{equation}\label{2.22}
\begin{aligned}
\frac{1}{2} \frac{\mathrm{d}}{\mathrm{d} t}\|\nabla u\|^{2}=-\|\Delta u\|^{2}-\mu \|\nabla u\|^{2}-\int_{\mathbb{R}^N} \sigma u(t-\tau) \Delta u(t) \mathrm{d}x -\int_{\mathbb{R}^N} f(u(t-\tau)) \Delta u(t) \mathrm{d}x-\int_{\mathbb{R}^N} g \Delta u(t) \mathrm{d}x.
\end{aligned}
\end{equation}
In the sequel,  we estimate each term on the right-hand side of \eqref{2.22}. Apparently, by the elementary inequality, we have
\begin{equation}\label{2.23}
\begin{aligned}
\left|-\sigma\int_{\mathbb{R}^N}  u(t-\tau) \Delta u\mathrm{d} x\right| & \leq  \sigma \left(\sigma \| u(t-\tau)\|^{2}+\frac{1}{4\sigma}\|\Delta u\|^{2}\right),
\end{aligned}
\end{equation}
\begin{equation}\label{2.24}
\begin{aligned}
 \int_{\mathbb{R}^N}  f(u(t-\tau)) \Delta u \mathrm{d} x & \leq   \frac{1}{2}(\left( \|f(u(t-\tau))\|^{2}+\|\Delta u\|^{2}\right) ) \\
& \leq     \frac{1}{2}(L_{f}^{2}\|u(t-\tau)\|^{2}+\|\Delta u\|^{2}),
\end{aligned}
\end{equation}
and
\begin{equation}\label{2.25}
\begin{aligned}
\left|-\int_{\mathbb{R}^N} g \Delta u(t) \mathrm{d}x\right| \leq \left\|g\right\|^{2}+\frac{1}{4}\|\Delta u\|^{2}.
\end{aligned}
\end{equation}
Consequently, it follows from \eqref{2.22}-\eqref{2.25} that
\begin{equation}\label{2.26}
\begin{aligned}
\frac{\mathrm{d}}{\mathrm{d} t}\|\nabla u\|^{2} \leq  2(\sigma^2+L_{f}^{2}) \|u(t-\tau)\|^{2}+2 \|g\|^2.
\end{aligned}
\end{equation}
Let $T_{\mathcal{D}}$ be the positive constant (absorbing time) obtained in Lemma \ref{lem2.2}. Taking $t \geq T_{\mathcal{D}}+\tau$ and $\rho \in(t+\zeta, s+\zeta), \zeta \in[-\tau, 0], s \in[t, t+1],$ and then integrating \eqref{2.26} over $(\rho, s+\zeta),$ we obtain
\begin{equation}\label{2.27}
\begin{aligned}
\|u(s+\zeta)\|_{\mathbb{X}^1}^{2} \leq &\|u(\rho)\|_{\mathbb{X}^1}^{2} +2(\sigma^2+L_{f}^{2})\int_{t+\zeta}^{t+\zeta+1}\|u(s-\tau)\|^{2} \mathrm{d} s+2 \|g\|^2.
\end{aligned}
\end{equation}
Integrating the above inequality with respect to $\rho$ over $(t+\zeta, t+\zeta+1)$, we have
\begin{equation}\label{2.28}
\begin{aligned}
\|u(s+\zeta)\|_{\mathbb{X}^1}^{2} \leq &\int_{t+\zeta}^{t+\zeta+1}\|u(\rho)\|_{\mathbb{X}^1}^{2}\mathrm{d} \rho +2(\sigma^2+L_{f}^{2})\int_{t+\zeta}^{t+\zeta+1}\|u(s-\tau)\|^{2} \mathrm{d} s+2 \|g\|^2.
\end{aligned}
\end{equation}
and hence it follows from lemmas \ref{lem2.2} and \ref{lem2.3} that
\begin{equation}\label{2.29}
\begin{aligned}
\|u_s\|_{\mathcal{C}^1}^{2} \leq &\int_{t}^{t+1}\|u_s\|_{\mathcal{C}^1}^{2}\mathrm{d} s
+2(\sigma^2+L_{f}^{2})\int_{t-\tau}^{t+1}\|u_s\|^{2} \mathrm{d} s+2 \|g\|^2\\
\leq &c_4+2(\sigma^2+L_{f}^{2})c_3^2+2 \|g\|^2.
\end{aligned}
\end{equation}
This completes the proof.
\end{proof}

To show the asymptotic compactness of the infinite dimensional dynamical system $\Phi$,  we need  the  following uniform a priori estimates for far-field values of solutions. The proof follows the idea of Lemma 5 in \cite{W99} and for  readers' convenience, we provide the complete details.

\begin{lem}\label{lem2.5} Assume that $\mathbf{Hypothesis\  A1}$  and assumptions of Lemma \ref{lem2.2} hold. Then, for every $\varepsilon>0$, there exist $ T(\varepsilon)$ and $R(\varepsilon)$ such that for  all $t \geq T(\varepsilon)$ and $K \geq R(\varepsilon)$, we have
$$
\sup_{\zeta\in [-\tau, 0]}\int_{x\in \Omega_K^C}|u(t+\zeta)|^2 \mathrm{~d} x \leq \varepsilon,
$$
where $T(\varepsilon)$ and $R(\varepsilon)$ depend on $\varepsilon$.
\end{lem}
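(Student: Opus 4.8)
The plan is to carry out a cut-off (far-field) energy estimate in the spirit of Wang \cite{W99}, adapted to the delay by recycling the delayed Gr\"onwall scheme of Lemma \ref{lem2.2}. First I would fix a smooth function $\theta:[0,\infty)\to[0,1]$ with $\theta(s)=0$ for $0\le s\le\frac12$, $\theta(s)=1$ for $s\ge1$ and $|\theta'|\le M_0$, and set $\theta_K(x)=\theta(|x|^2/K^2)$, so that $\theta_K\equiv1$ on $\Omega_K^C$, $\theta_K\equiv0$ on $\Omega_{K/\sqrt2}$ and $|\nabla\theta_K|\le C/K$. Since $\int_{\Omega_K^C}|u(t+\zeta)|^2\,\mathrm{d}x\le\int_{\mathbb{R}^N}\theta_K|u(t+\zeta)|^2\,\mathrm{d}x$ for every $\zeta$, it suffices to make the weighted energy $\int_{\mathbb{R}^N}\theta_K|u|^2\,\mathrm{d}x$ uniformly small.

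Next I would take the inner product of \eqref{1} with $\theta_K u(t)$ in $\mathbb{X}$. Integration by parts turns the diffusion term into $-\int_{\mathbb{R}^N}\theta_K|\nabla u|^2\,\mathrm{d}x$ (discarded, being nonpositive) plus the cross term $-\int_{\mathbb{R}^N}u\,\nabla\theta_K\cdot\nabla u\,\mathrm{d}x$; because $\nabla\theta_K$ is supported in the annulus $K/\sqrt2\le|x|\le K$ and is $O(1/K)$ there, this cross term is bounded by $\frac{C}{K}(\|u(t)\|^2+\|\nabla u(t)\|^2)$, which is $O(1/K)$ uniformly for $t\ge T_{\mathcal D}$ thanks to the uniform $\mathcal{C}^1$-bound of Lemma \ref{lem2.4}. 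The delay and reaction terms are treated exactly as in Lemma \ref{lem2.3}, using Young's inequality and $\mathbf{Hypothesis\ A1}$ with $f(\mathbf0)=0$, so that $\sigma\int\theta_K u(t-\tau)u$ and $\int\theta_K f(u(t-\tau))u$ are dominated by $(\sigma+L_f^2)$ times weighted $L^2$-norms of $u(t)$ and $u(t-\tau)$. The decisive term is the forcing one: $\int\theta_K g\,u\le\frac1{2\mu}\int_{\Omega_{K/\sqrt2}^C}|g|^2+\frac\mu2\int\theta_K|u|^2$, and $\int_{\Omega_{K/\sqrt2}^C}|g|^2\to0$ as $K\to\infty$ because $g\in\mathbb{X}=L^2(\mathbb{R}^N)$. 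Collecting the estimates, exactly as in the passage from \eqref{3.1} to \eqref{3.4}, yields
\begin{equation*}
\frac{\mathrm{d}}{\mathrm{d}t}\int_{\mathbb{R}^N}\theta_K|u(t)|^2\,\mathrm{d}x
\le(\sigma-\mu+1)\int_{\mathbb{R}^N}\theta_K|u(t)|^2\,\mathrm{d}x
+(\sigma+L_f^2)\int_{\mathbb{R}^N}\theta_K|u(t-\tau)|^2\,\mathrm{d}x+\delta(K),
\end{equation*}
where $\delta(K):=\frac1\mu\int_{\Omega_{K/\sqrt2}^C}|g|^2+\frac{C}{K}\to0$ as $K\to\infty$.

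Since this inequality has exactly the structure of \eqref{3.4} (now for the weighted energy, with the extra forcing constant $\delta(K)$ in place of $\frac1\mu\|g\|^2$), I would introduce the history functional $E_K(t):=\sup_{\zeta\in[-\tau,0]}\int_{\mathbb{R}^N}\theta_K|u(t+\zeta)|^2\,\mathrm{d}x$ and repeat verbatim the multiply-by-an-exponential-and-apply-Gr\"onwall manipulation of Lemmas \ref{lem2.2}--\ref{lem2.3}, which is legitimate under the standing hypotheses ($\mu>\sigma+1$ and $\sigma(L_f+1)e^{\mu\tau}<\mu$). This produces a bound of the form $E_K(t)\le C_1e^{-\gamma(t-T_{\mathcal D})}+C_2\,\delta(K)$ for some $\gamma>0$ and constants $C_1,C_2$ independent of $K$ and of the initial datum once it lies in the absorbing set $\mathcal K$. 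Given $\varepsilon>0$, I would first choose $R(\varepsilon)$ so large that $C_2\,\delta(K)\le\varepsilon/2$ for all $K\ge R(\varepsilon)$, and then $T(\varepsilon)\ge T_{\mathcal D}$ so large that $C_1e^{-\gamma(t-T_{\mathcal D})}\le\varepsilon/2$ for $t\ge T(\varepsilon)$; combined with $\int_{\Omega_K^C}|u(t+\zeta)|^2\le\int_{\mathbb{R}^N}\theta_K|u(t+\zeta)|^2$ this gives the claim.

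The main obstacle I anticipate is the delay coupling: the tail at time $t$ is driven by the tail at time $t-\tau$, so one cannot extract pointwise-in-time decay directly from the differential inequality but must pass to the supremum-over-history functional $E_K$ and run the delayed Gr\"onwall argument to close the loop, which is precisely the reason the weighted energy must be controlled over the whole interval $[-\tau,0]$. A secondary but essential point is that killing the cut-off gradient cross term requires the uniform $\mathcal{C}^1$ (i.e.\ $\mathbb{X}^1$) bound, which is exactly why Lemmas \ref{lem2.3} and \ref{lem2.4} are established beforehand; without that uniform control the $O(1/K)$ smallness of the cross term would not be available. One should also keep in mind that the superposition structure of $f$ (with $f(\mathbf0)=0$) is what lets the nonlinear term inherit the weight $\theta_K$, so that its contribution enters $\delta(K)$-free and is governed by the weighted norm of $u(t-\tau)$ rather than its full norm.
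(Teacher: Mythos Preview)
Your proposal is correct and follows essentially the same route as the paper: the same smooth radial cut-off $\chi(|x|^2/K^2)$, the same integration by parts producing an $O(1/K)$ cross term that is controlled via the uniform $\mathcal{C}^1$ bound of Lemma~\ref{lem2.4}, the same Young-inequality treatment of the delay, reaction and forcing terms, and the same passage to the supremum over $\zeta\in[-\tau,0]$ before concluding. The only noteworthy difference is in how the delayed contribution $\int\theta_K|u(s-\tau)|^2\,\mathrm{d}x$ is disposed of after the first Gr\"onwall step: the paper bounds it pointwise by the absorbing-set constant and then splits the right-hand side into four pieces each made $\le\varepsilon/4$, whereas you keep it as the unknown weighted history functional $E_K$ and close the loop by a second application of the delayed Gr\"onwall scheme. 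Your closure is the tidier of the two and makes the decay rate~$\gamma$ explicit, but the overall architecture is identical.
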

\begin{proof} Similar to \cite{W99}, we introduce the following smooth function $0 \leq\chi(s)\leq 1, s \in \mathbb{R}^{+}$ such that
\begin{equation}\label{2.30}
\chi(s)=\left\{\begin{array}{l}0, 0 \leq s \leq 1,\\ 1,s \geq 2.\end{array}\right.
\end{equation}
It follows from the smoothness of $\chi(s)$ that  there exists a constant $C$ such that $\left|\chi^{\prime}(s)\right| \leq C$ for $s \in \mathbb{R}^{+}$.
Multiplying both sides of \eqref{1} by $\chi\left(\frac{|x|^2}{K^2}\right)u$ and integrating on $\mathbb{R}^N$ imply
\begin{equation}\label{2.31}
\begin{aligned}
\frac{1}{2} \frac{\mathrm{d}}{\mathrm{d} t} \int_{\mathbb{R}^N}\chi\left(\frac{|x|^2}{K^2}\right)|u(t)|^2dx&=\int_{\mathbb{R}^N} \chi\left(\frac{|x|^2}{K^2}\right)u \Delta u dx-\mu \int_{\mathbb{R}^N} \chi\left(\frac{|x|^2}{K^2}\right)|u(t)|^2dx \\
&+\int_{\mathbb{R}^N} \chi\left(\frac{|x|^2}{K^2}\right)\sigma u(t-\tau) u(t) \mathrm{d}x +\int_{\mathbb{R}^N} \chi\left(\frac{|x|^2}{K^2}\right) f(u(t-\tau)) u(t) \mathrm{d}x\\&+\int_{\mathbb{R}^N} \chi\left(\frac{|x|^2}{K^2}\right) g u(t) \mathrm{d}x.
\end{aligned}
\end{equation}
We estimate each term on the right hand side of \eqref{2.31} as follows. First, we have
\begin{equation}\label{2.32}
\begin{aligned}
\int_{\mathbb{R}^N} \chi\left(\frac{|x|^2}{K^2}\right) u(t) \Delta u(t) d x= &-\int_{\mathbb{R}^N} \chi\left(\frac{|x|^2}{K^2}\right)|\nabla u(t)|^2 d x-\int_{K \leq|x| \leq \sqrt{2} K} \frac{2 x}{K^2}\chi'\left(\frac{|x|^2}{K^2}\right) u(t) \nabla u(t) d x.
\end{aligned}
\end{equation}
It follows from the Young inequality that
\begin{equation}\label{2.33}
\begin{aligned}
\left|\int_{K \leq|x| \leq \sqrt{2} K} \frac{2x}{K^2} \chi^{\prime}\left(\frac{|x|^2}{K^2}\right) u(t) \nabla u(t) d x\right| & \leq \frac{2\sqrt{2}}{K} \int_{K \leq|x| \leq \sqrt{2} K} \chi'\left(\frac{|x|^2}{K^2}\right)|u(t) \| \nabla u(t)| d x\\
 & \leq \frac{\sqrt{2}C}{K}\left(\|u(t)\|^2+\|\nabla u(t)\|^2\right).
\end{aligned}
\end{equation}
By \eqref{2.32} and \eqref{2.33},
\begin{equation}\label{2.34}
\begin{aligned}
\int_{\mathbb{R}^N} \chi\left(\frac{|x|^2}{K^2}\right) u(t) \Delta u(t) d x\leq &-\int_{\mathbb{R}^N} \chi\left(\frac{|x|^2}{K^2}\right)|\nabla u(t)|^2 d x\\
&+\frac{\sqrt{2}C}{K}\left(\|u(t)\|^2+\|\nabla u(t)\|^2\right).
\end{aligned}
\end{equation}
Next, by the Young inequality once more, we have
\begin{equation}\label{2.35}
\begin{aligned}
 \sigma\int_{\mathbb{R}^N}  \chi\left(\frac{|x|^2}{K^2}\right) u(t-\tau) u(t) \mathrm{d} x & \leq  \sigma \int_{\mathbb{R}^N}  \chi\left(\frac{|x|^2}{K^2}\right)[\frac{1}{2} |u(t-\tau)|^2+\frac{1}{2}|u(t)|^2 ]\mathrm{d} x\\
 & \leq  \frac{\sigma}{2}  \int_{\mathbb{R}^N}  \chi\left(\frac{|x|^2}{K^2}\right)|u(t-\tau)|^2\mathrm{d}x\\
 &\quad+\frac{\sigma}{2}\int_{\mathbb{R}^N}  \chi\left(\frac{|x|^2}{K^2}\right)|u(t)|^2\mathrm{d}x,
\end{aligned}
\end{equation}
\begin{equation}\label{2.36}
\begin{aligned}
 \int_{\mathbb{R}^N}\chi\left(\frac{|x|^2}{K^2}\right) f(u(t-\tau)) u(t)\mathrm{d} x & \leq   \frac{1}{2}\int_{\mathbb{R}^N}  \chi\left(\frac{|x|^2}{K^2}\right) |f(u(t-\tau))|^{2}\mathrm{d} x \\
 &\quad+\frac{1}{2}\int_{\mathbb{R}^N}  \chi\left(\frac{|x|^2}{K^2}\right) |u(t)|^{2}\mathrm{d} x  \\
& \leq  \int_{\mathbb{R}^N}  \chi\left(\frac{|x|^2}{K^2}\right) (L_{f}^{2}|u(t-\tau)|^{2}+|f(0)|^2)\mathrm{d} x \\
&\quad+\frac{1}{2}\int_{\mathbb{R}^N}  \chi\left(\frac{|x|^2}{K^2}\right) |u(t)|^{2}\mathrm{d} x,
\end{aligned}
\end{equation}
and
\begin{equation}\label{2.37}
\begin{aligned}
\int_{\mathbb{R}^N} \chi\left(\frac{|x|^2}{K^2}\right) g u(t) \mathrm{d}x \leq \frac{1}{2\mu}\int_{\mathbb{R}^N}  \chi\left(\frac{|x|^2}{K^2}\right) |g|^{2}\mathrm{d} x +\frac{\mu}{2}\int_{\mathbb{R}^N}  \chi\left(\frac{|x|^2}{K^2}\right) |u(t)|^{2}\mathrm{d} x.
\end{aligned}
\end{equation}
Incorporating  \eqref{2.34} to \eqref{2.37} into \eqref{2.31} we deduce
\begin{equation}\label{2.38}
\begin{aligned}
&\frac{\mathrm{d}}{\mathrm{d} t} \int_{\mathbb{R}^N}\chi\left(\frac{|x|^2}{K^2}\right)|u(t)|^{2}dx\\
&\quad\leq (1+\sigma-\mu)\int_{\mathbb{R}^N} \chi\left(\frac{|x|^2}{K^2}\right)|u(t)|^{2}dx-2\int_{\mathbb{R}^N} \chi\left(\frac{|x|^2}{K^2}\right)|\nabla u(t)|^2 d x\\
&\qquad+( \sigma +2L_f^2)  \int_{\mathbb{R}^N}  \chi\left(\frac{|x|^2}{K^2}\right)|u(t-\tau)|^2\mathrm{d}x+\frac{2\sqrt{2}C}{K}\left(\|u(t)\|^2+\|\nabla u(t)\|^2\right) \\&\qquad+\int_{\mathbb{R}^N}  \chi\left(\frac{|x|^2}{K^2}\right) (2|f(0)|^2+\frac{1}{\mu}|g|^{2})\mathrm{d} x.
\end{aligned}
\end{equation}
 Let $T_{\mathcal{D}}$ be the positive constant obtained in Lemma \ref{lem2.2} and $T_2\geq T_{\mathcal{D}}$. Thanks to  Gr{o}nwall's inequality on $[T_2, t]$, we derive
 \begin{equation}\label{2.39}
\begin{aligned}
&\int_{\mathbb{R}^N}\chi\left(\frac{|x|^2}{K^2}\right)|u(t)|^{2}\mathrm{d} x\\
&\quad\leq e^{(\sigma-\mu+1) (t-T_2)}\int_{\mathbb{R}^N}\chi\left(\frac{|x|^2}{K^2}\right)|u(T_2)|^2\mathrm{d} x+( \sigma +2L_f^2) \int_{T_2}^te^{(\sigma-\mu+1) (t-s)}\times\\
&\qquad\times \int_{\mathbb{R}^N}  \chi\left(\frac{|x|^2}{K^2}\right)|u(s-\tau)|^2\mathrm{d}x\mathrm{d}s+\int_{T_2}^te^{(\sigma-\mu+1) (t-s)}\int_{\mathbb{R}^N}  \chi\left(\frac{|x|^2}{K^2}\right) (2|f(0)|^2+\\&\qquad+\frac{1}{\mu}|g|^{2})\mathrm{d} x \mathrm{d}s +\int_{T_2}^te^{(\sigma-\mu+1) (t-s)}\frac{2\sqrt{2}C}{K}\left(\|u(s)\|^2+\|\nabla u(s)\|^2\right) \mathrm{d}s.
\end{aligned}
\end{equation}
Hence, for any $\zeta \in[-\tau, 0]$, we have
 \begin{equation}\label{2.39a}
\begin{aligned}
&\int_{\mathbb{R}^N}\chi\left(\frac{|x|^2}{K^2}\right)|u(t+\zeta)|^{2}\mathrm{d} x\\
&\quad\leq e^{(\sigma-\mu+1) (t+\zeta-T_2)}\int_{\mathbb{R}^N}\chi\left(\frac{|x|^2}{K^2}\right)|u(T_2)|^2\mathrm{d} x+(\sigma +2L_f^2) \int_{T_2}^{t+\zeta}e^{(\sigma-\mu+1) ((t+\zeta)-s)}\times\\
&\qquad\times \int_{\mathbb{R}^N}  \chi\left(\frac{|x|^2}{K^2}\right)|u(s-\tau)|^2\mathrm{d}x\mathrm{d}s+\int_{T_2}^{t+\zeta}e^{(\sigma-\mu+1) ((t+\zeta)-s)}\int_{\mathbb{R}^N}  \chi\left(\frac{|x|^2}{K^2}\right) (2|f(0)|^2+\\&\qquad+\frac{1}{\mu}|g|^{2})\mathrm{d} x \mathrm{d}s +\int_{T_2}^{t+\zeta}e^{(\sigma-\mu+1) ((t+\zeta)-s)}\frac{2\sqrt{2}C}{K}\left(\|u(s)\|^2+\|\nabla u(s)\|^2\right) \mathrm{d}s\\
&\quad\leq c_2 [e^{(\sigma-\mu+1) (t-T_2)}\int_{\mathbb{R}^N}\chi\left(\frac{|x|^2}{K^2}\right)|u(T_2)|^2\mathrm{d} x+( \sigma +2L_f^2) \int_{T_2}^te^{(\sigma-\mu+1) (t-s)}\times\\
& \qquad\times\int_{\mathbb{R}^N}  \chi\left(\frac{|x|^2}{K^2}\right)|u(s-\tau)|^2\mathrm{d}x\mathrm{d}s+\int_{T_2}^te^{(\sigma-\mu+1) (t-s)}\int_{\mathbb{R}^N}  \chi\left(\frac{|x|^2}{K^2}\right) (2|f(0)|^2+\\&\qquad+\frac{1}{\mu}|g|^{2})\mathrm{d} x \mathrm{d}s +\int_{T_2}^te^{(\sigma-\mu+1) (t-s)}\frac{2\sqrt{2}C}{K}\left(\|u(s)\|^2+\|\nabla u(s)\|^2\right) \mathrm{d}s].
\end{aligned}
\end{equation}
Now we estimate each term on the right hand side of \eqref{2.39a}. It follows from Lemma \ref{lem2.2} that
 \begin{equation}\label{2.40}
\begin{aligned}
c_2e^{-(\sigma-\mu+1) (t-T_2)}\int_{\mathbb{R}^N}\chi\left(\frac{|x|^2}{K^2}\right)|u(T_2)|^2dx&\leq c_2c_4e^{-(\sigma-\mu+1) (t-T_2)}
\end{aligned}
\end{equation}
and
 \begin{equation}\label{2.41}
\begin{aligned}
c_2( \sigma +2L_f^2) \int_{T_2}^te^{(\sigma-\mu+1) (s-t)}\int_{\mathbb{R}^N}  \chi\left(\frac{|x|^2}{K^2}\right)|u(s-\tau)|^2\mathrm{d}x\mathrm{d}s&\leq c_3c_4\int_{T_2}^te^{(\sigma-\mu+1) (s-t)}\mathrm{d}s,
\end{aligned}
\end{equation}
where $c_4=2\frac{c_1(\mu-\sigma-1)}{\mu-\sigma-1-c_3}$.
Therefore,  there exists $T_3(\varepsilon)>T_2$ such that for all $t\geq T_3(\varepsilon)$, we have
 \begin{equation}\label{2.42}
\begin{aligned}
c_2e^{-(\sigma-\mu+1) (t-T_2)}\int_{\mathbb{R}^N}\chi\left(\frac{|x|^2}{K^2}\right)|u^2(T_2)|dx&\leq \frac{\varepsilon}{4}.
\end{aligned}
\end{equation}
and
 \begin{equation}\label{2.43}
\begin{aligned}
c_2(\sigma +2L_f^2) \int_{T_2}^te^{(\sigma-\mu+1) (s-t)}\int_{\mathbb{R}^N}  \chi\left(\frac{|x|^2}{K^2}\right)|u(s-\tau)|^2\mathrm{d}x\mathrm{d}s&\leq \frac{\varepsilon}{4}.
\end{aligned}
\end{equation}
Since $g\in \mathbb{X}$ and $f(0)\in \mathbb{X}$, there exists $R_1(\varepsilon)$ such that  for any $K>R_1(\varepsilon)$
 \begin{equation}\label{2.44}
\begin{aligned}
c_2\int_{x\in \Omega_K^C}  \chi\left(\frac{|x|^2}{K^2}\right) (2|f(0)|^2+\frac{1}{\mu}|g|^{2})\mathrm{d} x &\leq \frac{\varepsilon}{4}.
\end{aligned}
\end{equation}
Hence, there exists $T_4(\varepsilon)>T_2$ such that for all $t\geq T_4$ and $K>R_1(\varepsilon)$
 \begin{equation}\label{2.45}
\begin{aligned}
c_2\int_{T_2}^te^{(\sigma-\mu+1) (s-t)}\int_{\mathbb{R}^N}  \chi\left(\frac{|x|^2}{K^2}\right) (2|f(0)|^2+\frac{1}{\mu}|g|^{2})\mathrm{d} x \mathrm{d}s
&\leq c_2\int_{T_2}^te^{(\sigma-\mu+1) (s-t)}\frac{\varepsilon}{4} \mathrm{d}s\\
&\leq \frac{\varepsilon}{4}.
\end{aligned}
\end{equation}
At last, it follows from lemmas \ref{lem2.2} and \ref{lem2.4} that $\|u(s)\|^2+\|\nabla u(s)\|^2\leq \|u_s\|_\mathcal{C}^2+\|u(s)\|_{\mathcal{C}^1}^2$ and hence there exists $T_5(\varepsilon)>T_2$ and $R_2(\varepsilon)>0$ such that for all $t\geq T_5$ and $K>R_2(\varepsilon)$, we have
 \begin{equation}\label{2.46}
\begin{aligned}
c_2\int_{T_2}^te^{(\sigma-\mu+1) (s-t)}\frac{2\sqrt{2}C}{K}\left(\|u(s)\|^2+\|\nabla u(s)\|^2\right) \mathrm{d}s
&\leq \frac{2\sqrt{2}C}{K}\int_{T_2}^te^{(\sigma-\mu+1) (s-t)}c_2(c_4+c_5)\mathrm{d}s\\
&\leq \frac{\varepsilon}{4}.
\end{aligned}
\end{equation}
Taking $T(\varepsilon)=\max\{T_2(\varepsilon), T_3(\varepsilon), T_4(\varepsilon), T_5(\varepsilon)\}$ and $R(\varepsilon)=\max\{R_1(\varepsilon), R_2(\varepsilon)\}$. It follows from  \eqref{2.42} to \eqref{2.46} that for any $t\geq T(\varepsilon)$, $\zeta\in[-\tau,0]$ and $R\geq R(\varepsilon)$, we have
 \begin{equation}\label{2.47}
\begin{aligned}
\sup_{\zeta\in[-\tau,0]} \int_{x\in \Omega_K^C}\chi\left(\frac{|x|^2}{K^2}\right)|u(t+\zeta)|^2\mathrm{d} x
&\leq \sup_{\zeta\in[-\tau,0]}\int_{\mathbb{R}^N}\chi\left(\frac{|x|^2}{K^2}\right)|u(t+\zeta)|^2\mathrm{d} x \\
&\leq  \varepsilon.
\end{aligned}
\end{equation}
The proof is completed.
\end{proof}

\section{Existence of global attractors}
In this section, we investigate the existence of global attractors of \eqref{1} by proving the asymptotic  compactness  of the dynamical system $\Phi$ together with the absorbing set obtained in Lemma \ref{lem2.1}. We first show the following results about the asymptotic  compactness of $\Phi(t)|_{\Omega_K}: \mathcal{C}_{\Omega_K}\rightarrow \mathcal{C}_{\Omega_K}$ defined  by
 \begin{equation}\label{4.1}
\Phi(t)|_{\Omega_K} \varphi =v_t^\varphi
\end{equation}
for all $\varphi\in \mathcal{C}_{\Omega_K}$ with $v_t^\varphi$ being solution to \eqref{2.3}.
\begin{lem}\label{lem4.1} Assume that $\mathbf{Hypothesis \  A1}$ and assumptions of Lemma \ref{lem2.2} hold and $T_{\mathcal{D}}$ is defined in Lemma \ref{lem2.2}. Then, the infinite dimensional dynamical system $\Phi|_{\Omega_K}$ defined by \eqref{4.1} is asymptotically compact in $\mathcal{C}_{\Omega_K}$. That is,  for any bounded $\mathcal{D}_{\Omega_K}\subseteq \mathcal{C}_{\Omega_K}$  and
$\{\varphi_n\}\subseteq \mathcal{D}_{\Omega_K}$, if $t_{n} \rightarrow \infty$, then the sequence $\{\Phi(t_n)|_{\Omega_K}\varphi_n\}_{n=1}^{\infty}$ has a convergent subsequence in $\mathcal{C}_{\Omega_K}$.
\end{lem}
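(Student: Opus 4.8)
The plan is to exploit the decisive feature that distinguishes the bounded piece $\Omega_K$ from the full space: on the bounded domain the embedding $\mathbb{X}_{\Omega_K}^1\hookrightarrow\mathbb{X}_{\Omega_K}$, that is $H_0^1(\Omega_K)\hookrightarrow L^2(\Omega_K)$, is compact by the Rellich--Kondrachov theorem, which is exactly what fails on $\mathbb{R}^N$. First I would fix a bounded set $\mathcal{D}_{\Omega_K}\subseteq\mathcal{C}_{\Omega_K}$, a sequence $\{\varphi_n\}\subseteq\mathcal{D}_{\Omega_K}$ and $t_n\to\infty$, and discard finitely many indices so that $t_n\geq T_{\mathcal{D}}+\tau$ for every $n$. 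The energy estimates behind Lemmas \ref{lem2.3} and \ref{lem2.4} rely only on testing the equation against the solution and against its Laplacian and integrating by parts; under the homogeneous Dirichlet condition on $\Omega_K$ these integrations produce no adverse boundary terms, so the analogues of those lemmas hold for the solution $v$ of \eqref{2.3} with the same structure of constants. Consequently, using the absorbing property (Lemma \ref{lem2.2}) and the uniform $\mathcal{C}^1$ bound (Lemma \ref{lem2.4}), the history segments $v_{t_n}^{\varphi_n}=\Phi(t_n)|_{\Omega_K}\varphi_n$ lie in a fixed bounded ball of $\mathcal{C}_{\Omega_K}^1$; since $\|\varphi_n\|$ is bounded over $\mathcal{D}_{\Omega_K}$, there is a constant $M_0$, independent of $n$, with $\sup_n\sup_{\xi\in[-\tau,0]}\|v_{t_n}^{\varphi_n}(\xi)\|_{\mathbb{X}_{\Omega_K}^1}\leq M_0$. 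The aim is then to extract a subsequence converging in $\mathcal{C}_{\Omega_K}=C([-\tau,0],\mathbb{X}_{\Omega_K})$ via the Arzel\`a--Ascoli theorem applied to the maps $\xi\mapsto v_{t_n}^{\varphi_n}(\xi)$ valued in $\mathbb{X}_{\Omega_K}$.

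Arzel\`a--Ascoli requires two ingredients. The pointwise precompactness is immediate: for each fixed $\xi$ the set $\{v_{t_n}^{\varphi_n}(\xi)\}_n$ is bounded in $\mathbb{X}_{\Omega_K}^1$ by $M_0$, hence precompact in $\mathbb{X}_{\Omega_K}$ by the compact embedding. The remaining and genuinely delicate ingredient is the equicontinuity in $\xi$ of the family $\{\xi\mapsto v_{t_n}^{\varphi_n}(\xi)\}_n$, and this is the main obstacle; I would obtain it from a modulus of continuity in time that is uniform in $n$.

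To produce such a modulus I would use the mild formulation of \eqref{2.3}. Denoting by $S_K(t)$ the analytic semigroup generated by $\Delta-\mu I$ under homogeneous Dirichlet conditions on $\Omega_K$, for $-\tau\leq\xi_1<\xi_2\leq0$ and $t=t_n$ one writes
$$
v(t+\xi_2)-v(t+\xi_1)=\bigl(S_K(\xi_2-\xi_1)-I\bigr)v(t+\xi_1)+\int_{t+\xi_1}^{t+\xi_2}S_K(t+\xi_2-s)F(s)\,\mathrm{d}s,
$$
where $F(s)=\sigma v(s-\tau)+f(u(s-\tau))\chi_{\Omega_K}+g\chi_{\Omega_K}$. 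Since $\|S_K(r)\|\leq e^{-\mu r}\leq1$, the integral term is bounded by $(\sup_s\|F(s)\|)\,(\xi_2-\xi_1)$, and $\sup_s\|F(s)\|$ is finite uniformly in $n$: the absorbing set controls $\|v(s-\tau)\|$, the Lipschitz condition together with $f(\mathbf{0})=0$ gives $\|f(u(s-\tau))\|\leq L_f\|u(s-\tau)\|$ with $\|u(s-\tau)\|$ bounded by Lemma \ref{lem2.2}, and $g\in\mathbb{X}$. For the first term I would invoke the standard analytic-semigroup smoothing estimate $\|(S_K(h)-I)\phi\|\leq C\,h^{1/2}\|\phi\|_{\mathbb{X}_{\Omega_K}^1}$, valid because $\mathbb{X}_{\Omega_K}^1$ coincides with the domain of the square root of $\mu I-\Delta$ under Dirichlet conditions; combined with the uniform bound $M_0$ this yields $C M_0(\xi_2-\xi_1)^{1/2}$. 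Adding the two estimates gives $\|v_{t_n}^{\varphi_n}(\xi_2)-v_{t_n}^{\varphi_n}(\xi_1)\|_{\mathbb{X}_{\Omega_K}}\leq\omega(|\xi_2-\xi_1|)$ with a modulus $\omega$ independent of $n$, which is exactly the required equicontinuity.

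With pointwise precompactness and equicontinuity in hand, the Arzel\`a--Ascoli theorem for $C([-\tau,0],\mathbb{X}_{\Omega_K})$ furnishes a subsequence of $\{v_{t_n}^{\varphi_n}\}$ converging in $\mathcal{C}_{\Omega_K}$, establishing the asymptotic compactness of $\Phi|_{\Omega_K}$. I expect the only real work to lie in the equicontinuity step, and specifically in justifying the fractional smoothing estimate for $S_K$; everything else reduces to the a priori bounds already established in Lemmas \ref{lem2.2} and \ref{lem2.4} and to the compactness of the Rellich embedding on the bounded domain $\Omega_K$.
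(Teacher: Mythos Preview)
Your proof is correct and shares the paper's overall architecture: both invoke the Arzel\`a--Ascoli theorem in $C([-\tau,0],\mathbb{X}_{\Omega_K})$, and both obtain pointwise precompactness from the uniform $\mathcal{C}_{\Omega_K}^1$ bound of Lemma~\ref{lem2.4} together with the compact Rellich embedding $\mathbb{X}_{\Omega_K}^1\hookrightarrow\mathbb{X}_{\Omega_K}$. The genuine difference lies in how equicontinuity is obtained. The paper works with the strong form of \eqref{2.3}: it bounds $\int_{t_n-\tau}^{t_n+1}\bigl\|\tfrac{d}{ds}v(s)\bigr\|_{\mathbb{X}_{\Omega_K}}\,ds$ directly from the right-hand side of the equation, invoking the integrated $\mathcal{C}^1$ estimate of Lemma~\ref{lem2.3} for the spatial term, and then concludes that $\|v(s_2)-v(s_1)\|$ is controlled by $|s_2-s_1|$. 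You instead use the mild formulation with the Dirichlet heat semigroup $S_K$, splitting $v(t+\xi_2)-v(t+\xi_1)$ into $(S_K(\xi_2-\xi_1)-I)v(t+\xi_1)$ plus a Duhamel integral, and handle the first piece via the analytic smoothing bound $\|(S_K(h)-I)\phi\|\leq Ch^{1/2}\|\phi\|_{\mathbb{X}_{\Omega_K}^1}$ combined with the \emph{pointwise} $\mathcal{C}^1$ bound of Lemma~\ref{lem2.4}. Your route yields only a H\"older-$\tfrac12$ modulus rather than a Lipschitz one, which is immaterial for Arzel\`a--Ascoli; it costs the identification $D((\mu I-\Delta)^{1/2})=H_0^1(\Omega_K)$, but in exchange avoids any direct $L^2$ estimate of $\Delta v$ and hence any need for $H^2$-type information on the solution.
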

\begin{proof}
It suffices to show that $\{\Phi(t_n)|_{\Omega_K}\varphi_n\}_{n=1}^{\infty}$ is precompact in $\mathcal{C}_{\Omega_K}$ thanks to the Ascoli-Arzel\`a theorem. Lemma \ref{lem2.2} implies that $\Phi(t_n, \varphi_n)$ is uniformly bounded in $\mathcal{C}$, which directly indicates that $\{\Phi(t_n)|_{\Omega_K}\varphi_n\}_{n=1}^{\infty}$ is uniformly bounded in  $\mathcal{C}_{\Omega_K}$.

In the sequel, we show that $\{\Phi(t_n)|_{\Omega_K}\varphi_n\}_{n=1}^{\infty}$ is uniformly equicontinuous in $\mathcal{C}_{\Omega_K}$.
It follows from lemmas \ref{lem2.2} and \ref{lem2.3} that
\begin{equation}\label{4.2b}
\begin{aligned}
\left\|u^\varphi_{s}\left(\cdot\right)\right\|_{\mathcal{C}_{\Omega_K}}\leq\left\|u^\phi_{s}\left(\cdot\right)\right\|_{\mathcal{C}} \leq   c_3
\end{aligned}
\end{equation}
and
\begin{equation}\label{4.2a}
\begin{aligned}
\int_{t-\tau}^{t+1}\left\|u^\phi_{s}\left(\cdot\right)\right\|_{\mathcal{C}_{\Omega_K}^{1}}  \mathrm{d} s\leq \int_{t-\tau}^{t+1}\left\|u^\phi_{s}\left(\cdot\right)\right\|_{\mathcal{C}^{1}} \mathrm{d} s \leq \sqrt{ c_4}.
\end{aligned}
\end{equation}
It follows from \eqref{2.3} that we can find large enough $t_n>T_{\mathcal{D}}+\tau$ such that
\begin{equation}\label{5.1}
\begin{aligned}
&\int_{t_{n}-\tau}^{t_{n}+1}\left\|\frac{d}{\mathrm{d} s} u^\varphi_s\right\|_{\mathbb{X}_{\Omega_K}} \mathrm{d} s\\ &\quad\leq  \int_{t_{n}-\tau}^{t_{n}+1}[\|u^\varphi_s\|_{\mathcal{C}^1_{\Omega_K}}+(\mu+\sigma+L_f)\|u^\varphi(s)\|_{\mathcal{C}_{\Omega_K}}+\|f(0)\|+\|g\|]\mathrm{d} s\\
&\quad\leq
 \sqrt{ c_4}+((\mu+\sigma+L_f) c_3+\|f(0)\|+\|g\|)(1+\tau)\triangleq c_5.
\end{aligned}
\end{equation}
Notice that $t_{n} \geq T_{\mathcal{D}}+\tau$  is equivalent to $n \geq N_{D}$ for some positive integer $N_{D}$. Therefore, for every $n \geq N_{D}$ and $s_{1}, s_{2} \in  [t_{n}-\tau, t_{n}+1]$, we have
\begin{equation}\label{5.2}
\begin{aligned}
&\left\|u^\varphi\left(s_{2}\right)-u^\varphi\left(s_{1}\right)\right\|
\leq  \left|s_{2}-s_{1}\right|  \int_{s_{1}}^{s_{2}}\left\|\frac{d}{\mathrm{d} s} u^\varphi(s)\right\|_{\mathbb{X}_{\Omega_K}} \mathrm{d} s
\leq  c_5\left|s_{2}-s_{1}\right|.
\end{aligned}
\end{equation}
which  implies the desired equicontinuity. For each fixed $\zeta \in [-\tau,0]$, by Lemma \ref{lem2.4} we can see
$\{\Phi(t_n)|_{\Omega_K}\varphi_n\}_{n=1}^{\infty}$ is bounded in $\mathcal{C}_{\Omega_K}^1$. Then it follows from the compact embedding of $\mathbb{X}^1_{\Omega_K}\hookrightarrow \mathbb{X}_{\Omega_K}$ that $\{\Phi(t_n)|_{\Omega_K}\varphi_n\}_{n=1}^{\infty}$ is compact in $\mathcal{C}_{\Omega_K}$. This completes the proof.
\end{proof}

We are now in a position to present our main result, i.e., the existence of a global attractor for $\phi$ in $\mathcal{C}$.

\begin{thm}\label{thm4.1} Assume that $\mathbf{Hypothesis \  A1}$ and assumptions of Lemma \ref{lem2.2} hold and $T_{\mathcal{D}}$ is defined in Lemma \ref{lem2.2}. Then, \eqref{1} admits a global attractor $\mathcal{A}$ which is a compact invariant set and attracts every bounded set in $\mathcal{C}$.
\end{thm}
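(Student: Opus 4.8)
The plan is to verify the hypotheses of the abstract existence criterion in Lemma \ref{lem2.1}, applied with phase space $X=\mathcal{C}$ and semiflow $S(t)=\Phi(t,\cdot)$. The space $\mathcal{C}=C([-\tau,0],\mathbb{X})$ is a separable Banach space and hence a Polish space, since $\mathbb{X}=L^2(\mathbb{R}^N)$ is separable. That $\Phi$ is a continuous semigroup for $t\geq 0$ follows from the well-posedness in Lemma \ref{lemma4.2} together with the Lipschitz bound in Hypothesis A1, which via Gr\"onwall yields continuous dependence on the initial data, while the semigroup property is a consequence of uniqueness of solutions. The existence of a bounded absorbing set $\mathcal{K}$ is precisely Lemma \ref{lem2.2}. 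It therefore remains to prove that $\Phi$ is asymptotically compact on $\mathcal{C}$, which is the crux of the argument.

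For asymptotic compactness I would fix a bounded set $\mathcal{D}\subseteq\mathcal{C}$, a sequence $\phi_n\in\mathcal{D}$ and times $t_n\to\infty$, and show that $\{\Phi(t_n,\phi_n)\}$ is precompact in $\mathcal{C}$. The device is the splitting $u=v+w$ with $v=u\chi_{\Omega_K}$ and $w=u\chi_{\Omega_K^C}$ from \eqref{2.3}--\eqref{2.4}, which gives
\begin{equation*}
\|u_{t_n}-u_{t_m}\|_{\mathcal{C}}\leq \|v_{t_n}-v_{t_m}\|_{\mathcal{C}_{\Omega_K}}+\|w_{t_n}\|_{\mathcal{C}}+\|w_{t_m}\|_{\mathcal{C}}.
\end{equation*}
Fix $\varepsilon>0$. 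Lemma \ref{lem2.5} supplies $T(\varepsilon)$ and $R(\varepsilon)$ so that, taking $K=R(\varepsilon)$, the far-field parts obey $\|w_{t_n}\|_{\mathcal{C}}=\sup_{\zeta}\big(\int_{\Omega_K^C}|u(t_n+\zeta)|^2\,dx\big)^{1/2}\leq\sqrt{\varepsilon}$ for all $t_n\geq T(\varepsilon)$, controlling the tail uniformly in $n$. On the bounded domain $\Omega_K$, Lemma \ref{lem4.1} ensures that $\{v_{t_n}\}=\{\Phi(t_n)|_{\Omega_K}\varphi_n\}$ is asymptotically compact in $\mathcal{C}_{\Omega_K}$, hence admits a subsequence that is Cauchy there, so $\|v_{t_n}-v_{t_m}\|_{\mathcal{C}_{\Omega_K}}\leq\sqrt{\varepsilon}$ along that subsequence for $n,m$ large. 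Combining the three estimates yields $\|u_{t_n}-u_{t_m}\|_{\mathcal{C}}\leq 3\sqrt{\varepsilon}$, so $\{\Phi(t_n,\phi_n)\}$ is totally bounded at scale $\varepsilon$.

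To upgrade total boundedness at each fixed scale into a genuinely convergent subsequence, I would run a diagonal argument over $\varepsilon_k\downarrow 0$ with corresponding radii $K_k=R(\varepsilon_k)\uparrow\infty$: extract nested subsequences on which the oscillation is at most $3\sqrt{\varepsilon_k}$ and pass to the diagonal, obtaining a subsequence that is Cauchy in $\mathcal{C}$ and hence convergent by completeness. This establishes asymptotic compactness of $\Phi$. With the Polish structure, continuity, the absorbing set $\mathcal{K}$, and asymptotic compactness all secured, Lemma \ref{lem2.1} applies and delivers the global attractor $\mathcal{A}=\bigcap_{s\geq 0}\overline{\bigcup_{t\geq s}\Phi(t,\mathcal{K})}$, which is compact, invariant, and attracts every bounded subset of $\mathcal{C}$. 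I expect the main obstacle to be exactly the interplay between the $\varepsilon$-dependence of the cut-off radius $K=R(\varepsilon)$ and the compactness extraction on $\Omega_K$: since $K$ must grow as $\varepsilon\to 0$, the interior compactness from Lemma \ref{lem4.1} and the uniform tail smallness from Lemma \ref{lem2.5} cannot be reconciled at a single fixed $K$, and must instead be merged through the diagonal procedure described above.
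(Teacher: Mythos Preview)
Your proposal is correct and follows essentially the same architecture as the paper: verify the hypotheses of Lemma~\ref{lem2.1} by combining the absorbing set from Lemma~\ref{lem2.2}, the far-field tail estimate from Lemma~\ref{lem2.5}, and the bounded-domain compactness from Lemma~\ref{lem4.1} to obtain asymptotic compactness of $\Phi$ in $\mathcal{C}$. The only substantive difference is in how the final extraction is performed. The paper first passes to a weak limit $\xi$ in $\mathcal{C}$ and then upgrades to strong convergence at each scale $\varepsilon$, whereas you work directly with the Cauchy criterion and run a diagonal argument over $\varepsilon_k\downarrow 0$, $K_k=R(\varepsilon_k)\uparrow\infty$. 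Your route is arguably cleaner here: $\mathcal{C}=C([-\tau,0],L^2(\mathbb{R}^N))$ is not reflexive, so the paper's appeal to weak sequential compactness of bounded sets is not automatic, while your diagonal/Cauchy argument needs only completeness of $\mathcal{C}$. Conversely, the paper's identification of a single target $\xi$ makes the $\varepsilon$-bookkeeping slightly shorter once that limit is granted. Either way the same three lemmas carry the proof.
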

\begin{proof}
Let $t_{n} \rightarrow \infty$ and take  $\{\phi_n\}_{n=1}^{\infty}\subseteq  \mathcal{D}$. It follows from Lemma \ref{lem2.1} that $\{\Phi\left(t_{n}, \phi_n\right)\}_{n=1}^{\infty}$ is bounded in $\mathcal{C}$. Therefore, there is $\xi \in \mathcal{C}$ such that, up to a subsequence,
 \begin{equation}\label{2.48}
\begin{aligned}
\Phi\left(t_{n}, \phi_n\right) \rightarrow \xi \quad \text { weakly in } \mathcal{C}.
\end{aligned}
\end{equation}
Next, we prove the above weak convergence  is actually strong convergence. For any given $\varepsilon>0$, by Lemma \ref{lem2.5}, there exist $T (\varepsilon)$ and $R (\varepsilon)$ such that, for all $t \geqslant T (\varepsilon)$,
 \begin{equation}\label{2.49}
\begin{aligned}
\int_{|x| \geqslant R (\varepsilon)}\left|\Phi\left(t, \phi_n\right) \right|^2 \mathrm{d} x \leqslant \varepsilon.
\end{aligned}
\end{equation}
Since $t_n \rightarrow \infty$, there is $N_1=N_1(\varepsilon)$ such that $t_n \geqslant T(\varepsilon)$ for every $n \geqslant N_1$. Hence, it follows from \eqref{2.49} that, for all $n \geqslant N_1$,
 \begin{equation}\label{2.50}
\begin{aligned}
\int_{|x| \geqslant R (\varepsilon)}\left|\Phi\left(t_{n}, \phi_n\right)\right|^2 \mathrm{d} x \leqslant \varepsilon.
\end{aligned}
\end{equation}
On the other hand, it follows from Lemma \ref{lem2.4} that   for all $t \geqslant T_\mathcal{D}$,
 \begin{equation}\label{2.51}
\begin{aligned}
\left\|\Phi\left(t, \phi_n\right)\right\|_{\mathcal{C}^1}^2 \leqslant c_5.
\end{aligned}
\end{equation}
Let $N_2=N_2(\varepsilon)$ be large enough such that $t_n \geqslant T_\mathcal{D}$ for $n \geqslant N_2$. Then, by \eqref{2.51} we find that, for all $n \geqslant N_2$,
 \begin{equation}\label{2.52}
\begin{aligned}
\left\|\Phi(t_{n})|_{\Omega_R (\varepsilon)} (\chi_{{\Omega_R (\varepsilon)}}\phi_n) \right\|_{\mathcal{C}_{\Omega_R (\varepsilon)}^1}^2 \leqslant c_5,
\end{aligned}
\end{equation}
where $\chi_{{\Omega_R (\varepsilon)}}$ is the characteristic function defined in \eqref{2.1}.
By  Lemma  \ref{lem4.1}, we can see
$$
\Phi(t_{n})|_{\Omega_R (\varepsilon)} (\chi_{{\Omega_R (\varepsilon)}}\phi_n)  \rightarrow \xi \quad \text { strongly in } \mathcal{C}_{\Omega_R (\varepsilon)},
$$
which shows that for the given $\varepsilon>0$, there exists $N_3=N_3(\varepsilon)$ such that, for all $n \geqslant N_3$,
 \begin{equation}\label{2.53}
\begin{aligned}
\left\|\Phi(t_{n})|_{\Omega_R (\varepsilon)} (\chi_{{\Omega_R (\varepsilon)}}\phi_n) -\xi\right\|_{\mathcal{C}_{\Omega_R (\varepsilon)}}^2 \leqslant \varepsilon.
\end{aligned}
\end{equation}
Recall that $\xi \in \mathbb{X}$. Therefore, there exists $R_*=R_*(\varepsilon)$ such that
 \begin{equation}\label{2.54}
\begin{aligned}
\int_{|x| \geqslant R_*}|\xi(x)|^2 d x \leqslant \varepsilon.
\end{aligned}
\end{equation}
Let $ R =\max \left\{R(\varepsilon), R_*\right\}$ and $N_4=\max \left\{N_1, N_2, N_3\right\}$. By \eqref{2.50}, \eqref{2.53} and \eqref{2.54}, we find that for all $n \geqslant N_4$,
 \begin{equation}\label{2.55}
\begin{aligned}
\left\|\Phi\left(t_{n}, \phi_n\right)-\xi\right\|_{\mathcal{C}}^2 \leqslant & \int_{|x| \leqslant  R }\left|\Phi(t_{n})|_{\Omega_R} (\chi_{{\Omega_R}}\phi_n)-\xi\right|^2 d x+\int_{|x| \geqslant  R}\left|\Phi\left(t_{n}, \phi_n\right)-\xi\right|^2 d x
 \\
\leqslant &  3\varepsilon,
\end{aligned}
\end{equation}
which shows that
$$
\Phi\left(t_{n}, \phi_n\right) \rightarrow \xi \quad \text { strongly in } \mathcal{C},
$$
implying the asymptotic compactness of $\Phi$ in $\mathcal{C}$.  Therefore, it follows from Lemmas \ref{lem2.1} and \ref{lem2.2} that $\Phi$ admits a global attractor.
\end{proof}
\section{Squeezing property}
This section is devoted to the squeezing property of the solutions to \eqref{2.3} and \eqref{2.4}.  In the remaining part of this paper, we  always assume that conditions of Theorem \ref{thm4.1} are satisfied and $T(\varepsilon)$ as well as $R(\varepsilon)$ are defined in Lemma \ref{lem2.5}. We will set $K=R(\varepsilon)$.

We first consider the following linear part of \eqref{2.3} on $\mathbb{X}_{\Omega_K}$.
\begin{equation}\label{5.1}
\left\{\begin{array}{l}
\frac{\partial \tilde{v}(x,t)}{\partial t}=\Delta \tilde{v}(x,t)-\mu \tilde{v}(x,t)+\sigma \tilde{v}(x,t-\tau), \\
\tilde{v}(x,s)=\phi(x, s)\chi_{\Omega_K}(x), s \in[-\tau, 0], x \in  \Omega_K,\\
\tilde{v}(x,t)=0, t \in(0, \infty), x \in \partial \Omega_K.
\end{array}\right.
\end{equation}
In order to set \eqref{5.1} in the abstract semigroup framework, we define $A: \mathbb{X}_{\Omega_K}\rightarrow \mathbb{X}_{\Omega_K}$ by
 \begin{equation}\label{5.2}
\begin{aligned}
Ax=\ddot{x}
\end{aligned}
\end{equation}
with domain $\operatorname{Dom}\left(A\right)=\left\{x \in C^2(\Omega_K); x(\partial \Omega_K)=0\right\}$, $L: \mathcal{C}_{\Omega_K}\rightarrow \mathbb{X}_{\Omega_K}$ by
 \begin{equation}\label{5.3}
\begin{aligned}
L\phi \triangleq -\mu\phi(0)+\sigma \phi(-\tau)
\end{aligned}
\end{equation}
for any $\phi\in \mathcal{C}_{\Omega_K}$.

It follows from \cite[Theorem 2.6]{WJ}  that   \eqref{5.1} admits a global solution   $\tilde{v}^\phi(\cdot):[-r, \infty] \rightarrow \mathbb{X}_{\Omega_K}$. Define the linear semigroup  $U(t): \mathcal{C}_{\Omega_K}\rightarrow \mathcal{C}_{\Omega_K}$ by $U(t)\phi=\tilde{v}^\phi_t(\cdot)$. Let  $A_U: \mathcal{C}_{\Omega_K}\rightarrow \mathcal{C}_{\Omega_K}$ be the infinitesimal generator of $U(t)$.

Consider the following eigenvalue problem on $\Omega_K$:
 \begin{equation}\label{3.9}
\begin{aligned}
-\Delta u(x)=\mu u(x),\left.\quad u(x)\right|_{x \in \partial \Omega_K}=0, \quad x \in \Omega_K,
\end{aligned}
\end{equation}
which has a family of solutions (eigenfunctions) $\left\{e_{m, K}\right\}_{m \in \mathbb{N}}$ with eigenvalues $\left\{\mu_{m, K}\right\}_{m \in \mathbb{N}}$ such that
$$
0<\mu_{1, K} \leq \mu_{2, K} \leq \cdots \leq \mu_{m, K} \leq \cdots, \quad \mu_{m, K} \rightarrow+\infty \quad \text { as } \quad m \rightarrow+\infty.
$$
Since $A_U$ is compact, it follows from \cite[Theorem 1.2 (i)]{WJ} that the spectrum of $A_U$  are point spectra, which we denote by $\varrho_1>\varrho_2>\cdots$ with multiplicity $n_1, n_2,\cdots$.
Moreover, it follows from  \cite{WJ} that the characteristic values  $\varrho_1>\varrho_2>\cdots$  of the linear part  $A_U$ are the roots of the following characteristic equation
 \begin{equation}\label{3.10}
\begin{aligned}
\mu^2_{m, K} -\left(\lambda+\mu-\sigma e^{-\lambda \tau}\right) =0, m=1,2, \cdots,
\end{aligned}
\end{equation}
where  $\varrho_1$ is the first eigenvalue of  $A_U$  defined as
 \begin{equation}\label{3.11}
\begin{aligned}
\varrho_1=\max \left\{\operatorname{Re} \lambda: \mu^2_{m, K} -\left(\lambda+\mu-\sigma e^{-\lambda \tau}\right) =0\right\}, n=1,2, \cdots.
\end{aligned}
\end{equation}

It follows from Theorem 1.10 on P71 in \cite{WJ} that there exists a number such that all eigenvalues of $A_U$ lie in its left, implying that there exists at most a finite number of  eigenvalues of $A_U$ which are positive.  Hence,   there exists $m\geq 1$ such that  $\varrho_m<0$, and there is a
 \begin{equation}\label{3.12}
\begin{aligned}
k_m=n_1+n_2+\cdots+n_m
\end{aligned}
\end{equation}
dimensional  subspace $\mathcal{C}_{\Omega_K}^U$ such that $\mathcal{C}_{\Omega_K}$ is decomposed by  $A_U$  as
 \begin{equation}\label{3.12a}
\begin{aligned}
\mathcal{C}_{\Omega_K}= \mathcal{C}_{\Omega_K} ^U \bigoplus  \mathcal{C}_{\Omega_K} ^S.
\end{aligned}
\end{equation}
 Let $P_{k_m}$ and $Q_{k_m}$ be the projection of $\mathcal{C}_{\Omega_K}$ onto $ \mathcal{C}_{\Omega_K}^U$ and $ \mathcal{C}_{\Omega_K}^S$ respectively, that is
   \begin{equation}\label{3.12b}
\begin{aligned}
\mathcal{C}_{\Omega_K}^U=P_{k_m}\mathcal{C}_{\Omega_K}
\end{aligned}
\end{equation}
and
 \begin{equation}\label{3.12c}
\begin{aligned}
\mathcal{C}_{\Omega_K}^S=(I-P_{k_m})\mathcal{C}_{\Omega_K}=Q_{k_m}\mathcal{C}_{\Omega_K}.
\end{aligned}
\end{equation}
It follows from the definition of $P_{k_m}$ and $Q_{k_m}$ that
\begin{equation}\label{3.13}
\begin{aligned}
\left\|U(t)Q_{k_m} x\right\| & \leq K_m e^{\varrho_m t}\|x\|, & & t \geq0,
\end{aligned}
\end{equation}
where $K_m$ and $\gamma$ are some positive constants.

To show the squeezing property, we extend the domain of $U(t)$ to the following space of some discontinuous functions
 \begin{equation}\label{3.14}
\begin{aligned}
\hat{C}=\left\{\phi:[-\tau, 0] \rightarrow \mathcal{C}_{\Omega_K} ; \phi|_{[-\tau, 0)} \quad \text {is continuous and } \lim _{\theta \rightarrow 0^{-}} \phi(\theta) \in \mathcal{C}_{\Omega_K} \quad \text{exists} \right\}
\end{aligned}
\end{equation}
and introduce the following informal variation of  constant formula established in \cite{WJ}
 \begin{equation}\label{5.13}
\begin{aligned}
v(t) & =U(t) \varphi+\int_0^t\left[U(t-s) X_0 f(v_s)\right](0) d s, \quad t \geq 0.
\end{aligned}
\end{equation}
It is proved by  \cite[Theorem 2.1]{WJ} that the function $v:[-\tau, \infty) \rightarrow \mathcal{C}_{\Omega_K}$ defined by \eqref{5.13} satisfies \eqref{2.3} with $v_0=\phi \chi_{\Omega_K} \triangleq \varphi$, where $X_0:[-\tau, 0] \rightarrow B(\mathcal{C}_{\Omega_K}, \mathcal{C}_{\Omega_K})$ is given by $X_0(\theta)=0$ if $-\tau \leq \theta<0$ and $X_0(0)=I d$.

\begin{rem}
Generally, the solution semigroup defined by \eqref{3.15} is not defined at discontinuous functions and the integral in the formula is undefined as an integral in the phase space. However,  if interpreted correctly, \eqref{3.15} does make sense. Details can be found in \cite[pages 144-145]{CM}.
\end{rem}

Let  $\Phi|_{\Omega_K}$ be defined  by \eqref{4.1} and define $\Phi|_{\Omega_K^C}: \mathcal{C} \rightarrow \mathcal{C}$  by
 \begin{equation}\label{4.2}
\Phi(t)|_{\Omega_K^C} \varphi =w_t^\varphi
\end{equation}
for all $\varphi\in \mathcal{C}$ with $w_t^\varphi$ being solution to \eqref{2.4}.
Therefore, for any $\varphi, \psi\in \mathcal{C}$, we can decompose $\|\Phi(t,\varphi)-\Phi(t,\psi)\|_{\mathcal{C}}$ into two parts
 \begin{equation}\label{5.14a}
\begin{aligned}
 \|\Phi(t,\varphi)-\Phi(t,\psi) \|_{\mathcal{C}}\leq\left\| \Phi(t)|_{\mathcal{C}_{\Omega_K}}\varphi- \Phi(t)|_{\mathcal{C}_{\Omega_K}}\psi\right\|_{\mathcal{C}_{\Omega_K}}+\left\| \Phi(t)|_{\mathcal{C}_{\Omega_K^C}}\varphi- \Phi(t)|_{\mathcal{C}_{\Omega_K^C}}\psi\right\|_{\mathcal{C}}.
\end{aligned}
\end{equation}
By Theorem 4.1 in our recent work \cite{hc},  we have the following squeezing property of the first part of  \eqref{5.14a}.
\begin{lem}\label{lem5.1}Let $P$ be  the finite dimension projection $P_{k_m}$ defined by \eqref{3.13}, $\varrho_{1}, \varrho_{m}$  and $K_m$ being defined in \eqref{3.11} and \eqref{3.13} respectively. Then we have
 \begin{equation}\label{3.16}
\left\|P \Phi(t)|_{\mathcal{C}_{\Omega_K}}\varphi-P \Phi(t)|_{\mathcal{C}_{\Omega_K}}\psi\right\|_{\mathcal{C}_{\Omega_K}} \leq 2e^{(L_f+\varrho_1)t}\left\|\varphi-\psi\right\|_{\mathcal{C}}
\end{equation}
and
 \begin{equation}\label{3.17}
\begin{gathered}
\left\|(I-P) \Phi(t)|_{\mathcal{C}_{\Omega_K}}\varphi-(I-P) \Phi(t)|_{\mathcal{C}_{\Omega_K}}\psi\right\|_{\mathcal{C}_{\Omega_K}}\leq (K_me^{\varrho_m t}+\frac{K_mL_f }{\varrho_1+L_f-\varrho_m} e^{(L_f+\varrho_1)t})\left\|\varphi-\psi\right\|_{\mathcal{C}}
\end{gathered}
\end{equation}
for any $t\geq 0$ and $\varphi, \psi\in\mathcal{A}$.
\end{lem}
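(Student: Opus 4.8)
The plan is to combine the variation of constants formula \eqref{5.13} with the spectral decomposition \eqref{3.12a} and the exponential bound \eqref{3.13}, reducing both inequalities to two Grönwall estimates and an explicit evaluation of convolution integrals. Write $v=v^{\varphi}$ and $\bar v=v^{\psi}$ for the solutions of \eqref{2.3} issued from $\varphi$ and $\psi$, and set $z_t\triangleq v_t-\bar v_t=\Phi(t)|_{\mathcal{C}_{\Omega_K}}\varphi-\Phi(t)|_{\mathcal{C}_{\Omega_K}}\psi$. Subtracting the two instances of \eqref{5.13} and using the linearity of $U(t)$ gives
\begin{equation*}
z_t=U(t)(\varphi-\psi)+\int_0^t\bigl[U(t-s)X_0\bigl(f(v_s)-f(\bar v_s)\bigr)\bigr](0)\,\mathrm{d}s .
\end{equation*}
Since $P=P_{k_m}$ is the spectral projection attached to the finite set of characteristic values $\varrho_1>\cdots>\varrho_m$, it commutes with $U(t)$; applying $P$ and $I-P$ therefore splits this identity into two formulas of identical shape, one carried by the growth rate $\varrho_1$ on the finite-dimensional range of $P$, the other by the decay rate $\varrho_m$ on the range of $I-P$.

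First I would derive a global bound for the full norm $\|z_t\|_{\mathcal{C}_{\Omega_K}}$. By the definition \eqref{3.11} of $\varrho_1$ as the largest real part of the spectrum one has $\|U(t)\phi\|_{\mathcal{C}_{\Omega_K}}\le e^{\varrho_1 t}\|\phi\|_{\mathcal{C}_{\Omega_K}}$ for $t\ge0$ (working, if necessary, with the norm adapted to the dichotomy), so invoking \textbf{Hypothesis A1} in the form $\|f(v_s)-f(\bar v_s)\|\le L_f\|z_s\|_{\mathcal{C}_{\Omega_K}}$ turns the identity above into $\|z_t\|\le e^{\varrho_1 t}\|\varphi-\psi\|+L_f\int_0^t e^{\varrho_1(t-s)}\|z_s\|\,\mathrm{d}s$. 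Multiplying by $e^{-\varrho_1 t}$ and applying Grönwall's inequality yields the key a priori estimate $\|z_s\|_{\mathcal{C}_{\Omega_K}}\le e^{(\varrho_1+L_f)s}\|\varphi-\psi\|$ for all $s\ge0$.

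The two assertions then follow by substitution. For \eqref{3.16} I insert this bound into the $P$-component, where $\|U(t-s)P\,\cdot\|\le e^{\varrho_1(t-s)}\|\cdot\|$, and compute $L_f\int_0^t e^{\varrho_1(t-s)}e^{(\varrho_1+L_f)s}\,\mathrm{d}s=e^{\varrho_1 t}\bigl(e^{L_f t}-1\bigr)$; bounding the free term $e^{\varrho_1 t}\|\varphi-\psi\|$ and this integral each by $e^{(L_f+\varrho_1)t}\|\varphi-\psi\|$ produces the factor $2$. For \eqref{3.17} I instead use \eqref{3.13} on the $(I-P)$-component, which contributes the free term $K_m e^{\varrho_m t}\|\varphi-\psi\|$, and evaluate $K_mL_f\int_0^t e^{\varrho_m(t-s)}e^{(\varrho_1+L_f)s}\,\mathrm{d}s=\frac{K_mL_f}{\varrho_1+L_f-\varrho_m}\bigl(e^{(\varrho_1+L_f)t}-e^{\varrho_m t}\bigr)$; since $\varrho_1>\varrho_m$ the denominator is positive, and discarding the negative remainder $-e^{\varrho_m t}$ leaves exactly the second term of \eqref{3.17}.

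The main obstacle is not the Grönwall bookkeeping but the rigorous use of \eqref{5.13}: the kernel $X_0$ is the discontinuous $B(\mathcal{C}_{\Omega_K},\mathcal{C}_{\Omega_K})$-valued function introduced before the Remark, so moving $P$ and $I-P$ under the integral and treating $X_0\bigl(f(v_s)-f(\bar v_s)\bigr)$ as an admissible forcing must be justified through the reinterpretation of the formula recalled in \cite[pages 144--145]{CM} rather than through a naive Bochner integral in the phase space. A secondary technical point is to confirm that the growth constant of $U(t)$ can be normalised to $1$ on the range of $P$, since this is precisely what yields the clean constant $2$ in \eqref{3.16}; the hypothesis $\varphi,\psi\in\mathcal{A}$ is used only to guarantee, via Theorem \ref{thm4.1}, that both trajectories are global and bounded so that \eqref{5.13} holds along the entire orbit.
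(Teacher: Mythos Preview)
Your approach is correct and is the standard route to such squeezing estimates: subtract the two variation-of-constants representations, use that the spectral projections commute with $U(t)$, first run a Gr\"onwall argument on the undecomposed difference $z_t$ to control the Lipschitz forcing, and then evaluate the resulting convolution integrals on the $P$- and $(I-P)$-components separately. The paper itself does not prove Lemma~\ref{lem5.1}; it simply invokes Theorem~4.1 of the authors' companion preprint~\cite{hc}, so there is no in-text argument to compare against, but your outline is almost certainly what that cited result does.

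One small remark on the bookkeeping you already flag: your own computation for the $P$-component actually gives
\[
e^{\varrho_1 t}\|\varphi-\psi\|+e^{\varrho_1 t}\bigl(e^{L_f t}-1\bigr)\|\varphi-\psi\|=e^{(L_f+\varrho_1)t}\|\varphi-\psi\|,
\]
i.e.\ a constant $1$ rather than $2$, provided $\|U(t)P\|\le e^{\varrho_1 t}$ holds with unit prefactor. The factor $2$ in \eqref{3.16} therefore presumably absorbs either a bound on $\|P\|$ (spectral projections in $\mathcal{C}_{\Omega_K}$ need not have norm $1$) or a constant arising from extending $U(t)$ to the discontinuous space $\hat C$ so that $U(t-s)X_0$ makes sense. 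This is exactly the ``secondary technical point'' you isolate, and it cannot be fully resolved from what is written in the present paper; you would need to consult~\cite{hc} for the precise normalisation.
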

In the sequel, we show the squeezing property of the second part of  \eqref{5.14a}.
\begin{lem}\label{lem5.2}
Let $T_{\mathcal{D}}$ as well as $R$ be defined in Lemma \ref{lem2.2}. Then,   it holds that \begin{equation}\label{5.16}
\begin{aligned}
 \left\| \Phi(t)|_{\mathcal{C}_{\Omega_K^C}}\varphi- \Phi(t)|_{\mathcal{C}_{\Omega_K^C}}\psi\right\|_{\mathcal{C}}
& =\|w^\phi_t-w^\psi_t\|_{\mathcal{C}}
  \leq  \sqrt{c_2}e^{\frac{1}{2}[c_2(\sigma+ L_{f}^{2})-(\mu-\sigma-1)]t} \|\phi-\varphi\|_{\mathcal{C}} .
\end{aligned}
\end{equation}
\end{lem}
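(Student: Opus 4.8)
The plan is to reproduce, for the difference of two far-field solutions, the energy-estimate and delayed Gr\"onwall argument already carried out in Lemma \ref{lem2.3}. Let $W=w^{\varphi}-w^{\psi}$ denote the difference of the two solutions of the far-field equation \eqref{2.4} issued from $\varphi$ and $\psi$. Subtracting the two copies of \eqref{2.4}, the inhomogeneous term $g(x)\chi_{\Omega_K^C}(x)$ cancels, so that $W$ solves
\begin{equation*}
\frac{\partial W}{\partial t}=\Delta W-\mu W+\sigma W(t-\tau)+\left[f(u^{\varphi}(t-\tau))-f(u^{\psi}(t-\tau))\right]\chi_{\Omega_K^C},
\end{equation*}
with initial datum $W_0=(\varphi-\psi)\chi_{\Omega_K^C}$, and in particular $\|W_0\|_{\mathcal{C}}\le\|\varphi-\psi\|_{\mathcal{C}}$ since multiplication by $\chi_{\Omega_K^C}$ does not increase the $\mathbb{X}$-norm.

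First I would take the inner product of this difference equation with $W(t)$ in $\mathbb{X}$, exactly as in \eqref{3.1}. The Laplacian contributes $-\|\nabla W\|^2\le 0$, which I discard; the linear delay term is handled by $2ab\le a^2+b^2$ as in \eqref{3.2a}; and the nonlinear term is controlled through $\mathbf{Hypothesis\ A1}$, using that $\|[f(u^{\varphi}(t-\tau))-f(u^{\psi}(t-\tau))]\chi_{\Omega_K^C}\|\le L_f\|u^{\varphi}_{t-\tau}-u^{\psi}_{t-\tau}\|_{\mathcal{C}}$. Carrying out the same manipulations that produced \eqref{3.4}, but now with no forcing term, yields the delay differential inequality
\begin{equation*}
\frac{\mathrm{d}}{\mathrm{d}t}\|W(t)\|^2\le(\sigma-\mu+1)\|W(t)\|^2+(\sigma+L_f^2)\|W_t\|_{\mathcal{C}}^2 .
\end{equation*}

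Next I would integrate this inequality and pass to the supremum norm $\|W_t\|_{\mathcal{C}}$, following verbatim the passage from \eqref{3.5} through \eqref{3.6a}: shifting the time argument by at most $\tau$ produces the factor $c_2=e^{(\mu-\sigma-1)\tau}$, and a Gr\"onwall argument in the delayed variable, identical to that of Lemma \ref{lem2.3}, gives $\|W_t\|_{\mathcal{C}}^2\le c_2\,e^{[c_2(\sigma+L_f^2)-(\mu-\sigma-1)]t}\|W_0\|_{\mathcal{C}}^2$. Taking square roots and recalling $\|W_0\|_{\mathcal{C}}\le\|\varphi-\psi\|_{\mathcal{C}}$ produces precisely the prefactor $\sqrt{c_2}$ and the exponent $\tfrac12[c_2(\sigma+L_f^2)-(\mu-\sigma-1)]$ claimed in \eqref{5.16}.

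The \emph{main obstacle} is the nonlinear coupling $[f(u^{\varphi}(t-\tau))-f(u^{\psi}(t-\tau))]\chi_{\Omega_K^C}$: a priori it is governed by the full solution difference $u^{\varphi}-u^{\psi}$ rather than by $W$ alone, so the differential inequality above is not yet self-contained in $W$. I expect to close this gap in one of two ways. Either one exploits that $f$ acts locally, so that on $\Omega_K^C$ one has $u^{\varphi}=w^{\varphi}$ and $u^{\psi}=w^{\psi}$ and the term is Lipschitz-controlled by $\|W_{t-\tau}\|_{\mathcal{C}}$ directly; or, more robustly, one first runs the identical energy estimate on the full difference $U=u^{\varphi}-u^{\psi}$, for which \eqref{3.4} applies after deleting the $g$-term, obtaining the same exponential bound for $\|U_t\|_{\mathcal{C}}$, and then transfers it through the trivial inequality $\|w^{\varphi}_t-w^{\psi}_t\|_{\mathcal{C}}\le\|u^{\varphi}_t-u^{\psi}_t\|_{\mathcal{C}}$. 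Some care is also needed in the delayed Gr\"onwall step to reproduce the exact constants $c_2$ and $\mu-\sigma-1$ appearing elsewhere in the paper, rather than looser ones.
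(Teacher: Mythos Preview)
Your proposal is correct and follows essentially the same route as the paper: an energy estimate on the difference, Young's inequality on the delayed and nonlinear terms, then the two-step (shift-by-$\tau$ and Gr\"onwall) passage to the $\mathcal{C}$-norm that produces exactly the constants $c_2$ and $c_2(\sigma+L_f^2)-(\mu-\sigma-1)$. The only cosmetic difference is that the paper multiplies by the smooth cutoff $\chi(|x|^2/\bar R)\,y$ before integrating (as in Lemma~\ref{lem2.5}) rather than taking the plain $\mathbb{X}$-inner product, and it resolves your ``main obstacle'' precisely via your first option---the pointwise locality of $f$, so that on $\Omega_K^C$ the difference $u^{\varphi}-u^{\psi}$ coincides with $y$.
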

\begin{proof} Take $\bar{R}=\max\{2, R\}$ and denote by $y=w^\phi_t-w^\psi_t$. Then it follows from  \eqref{2.3} that $y$ is the solution of the following equation
\begin{equation}\label{5.14}
\left\{\begin{array}{l}
\dfrac{\partial y(x,t)}{\partial t}=\Delta y(x,t)-\mu y(x,t)+\sigma y(x,t-\tau)+[f \left(u^\phi(x,t-\tau)\right)-f \left(u^\psi(x,t-\tau)\right)]\chi_{\Omega_{\bar{R}}^C}(x), \\
y(x,s)=[\phi(x, s)-\psi(x,s)] \chi_{\Omega_{\bar{R}}^C}(x), s \in[-\tau, 0], x \in \mathbb{R}^N.
\end{array}\right.
\end{equation}
 Multiplying  both sides of \eqref{5.14} by $\chi\left(\frac{|x|^2}{\bar{R}}\right)y$ and integrating on $\Omega_{\bar{R}}^C$ imply
\begin{equation}\label{5.15}
\begin{aligned}
&\frac{1}{2} \frac{\mathrm{d}}{\mathrm{d} t} \int_{\Omega_{\bar{R}}^C}\chi\left(\frac{|x|^2}{\bar{R}}\right)y^2(t)dx\\
&\quad=\int_{\Omega_{\bar{R}}^C} \chi\left(\frac{|x|^2}{\bar{R}}\right)y \Delta y dx-\mu \int_{\Omega_{\bar{R}}^C} \chi\left(\frac{|x|^2}{\bar{R}}\right)y^2(t)dx \\
&\qquad+\int_{\Omega_{\bar{R}}^C} \chi\left(\frac{|x|^2}{\bar{R}}\right)\sigma y(t-\tau) y(t) \mathrm{d}x  \\
&\qquad+ \int_{\Omega_{\bar{R}}^C} \chi\left(\frac{|x|^2}{\bar{R}}\right)[f \left(u^\phi(x,t-\tau)\right)-f \left(u^\psi(x,t-\tau)\right)]\chi_{\Omega_{\bar{R}}^C}(x) y(t) \mathrm{d}x.
\end{aligned}
\end{equation}
We estimate each term on the right hand side of \eqref{5.15} as follows. First, we have
\begin{equation}\label{5.25}
\begin{aligned}
\int_{\Omega_{\bar{R}}^C} \chi\left(\frac{|x|^2}{\bar{R}}\right) u(t) \Delta u(t) d x= &-\int_{\Omega_{\bar{R}}^C} \chi\left(\frac{|x|^2}{\bar{R}}\right)|\nabla u(t)|^2 d x.
\end{aligned}
\end{equation}
Next, by the Young inequality, we deduce
\begin{equation}\label{5.28}
\begin{aligned}
& \sigma\int_{\Omega_{\bar{R}}^C}  \chi\left(\frac{|x|^2}{\bar{R}}\right) y(t-\tau) y(t) \mathrm{d} x \\
&\quad \leq  \sigma \int_{\Omega_{\bar{R}}^C}  \chi\left(\frac{|x|^2}{\bar{R}}\right)[\frac{1}{2} |y(t-\tau)|^2+\frac{1}{2}|y(t)|^2 ]\mathrm{d} x\\
 &\quad \leq  \frac{\sigma}{2}  \int_{\Omega_{\bar{R}}^C}  \chi\left(\frac{|x|^2}{\bar{R}}\right)|y(t-\tau)|^2\mathrm{d}x+\frac{\sigma}{2}\int_{\Omega_{\bar{R}}^C}  \chi\left(\frac{|x|^2}{\bar{R}}\right)|y(t)|^2\mathrm{d}x,
\end{aligned}
\end{equation}
and
\begin{equation}\label{5.29}
\begin{aligned}
 &\int_{\Omega_{\bar{R}}^C}\chi\left(\frac{|x|^2}{\bar{R}}\right) [f \left(u^\phi(x,t-\tau)\right)-f \left(u^\psi(x,t-\tau)\right)]\chi_{\Omega_K^C}(x)y(t)\mathrm{d} x\\
  & \leq   \frac{1}{2}\int_{\Omega_{\bar{R}}^C}  \chi\left(\frac{|x|^2}{\bar{R}}\right) |[f \left(u^\phi(x,t-\tau)\right)-f \left(u^\psi(x,t-\tau)\right)]\chi_{\Omega_K^C}(x)|^{2}\mathrm{d} x \\
  &\quad+\frac{1}{2}\int_{\Omega_{\bar{R}}^C}  \chi\left(\frac{|x|^2}{\bar{R}}\right) |y(t)|^{2}\mathrm{d} x  \\
& \leq  \frac{1}{2}L_{f}^{2}\int_{\Omega_{\bar{R}}^C}  \chi\left(\frac{|x|^2}{\bar{R}}\right)  |y(t-\tau)|^{2} \mathrm{d} x +\frac{1}{2}\int_{\Omega_{\bar{R}}^C}  \chi\left(\frac{|x|^2}{\bar{R}}\right) |y(t)|^{2}\mathrm{d} x.
\end{aligned}
\end{equation}
Incorporating  \eqref{5.28} and \eqref{5.29} into \eqref{5.15} yields
\begin{equation}\label{5.31}
\begin{aligned}
\frac{\mathrm{d}}{\mathrm{d} t} \int_{\Omega_{\bar{R}}^C}\chi\left(\frac{|x|^2}{\bar{R}}\right)|y(t)|^2dx&\leq (1+\sigma-\mu)\int_{\Omega_{\bar{R}}^C} \chi\left(\frac{|x|^2}{\bar{R}}\right)|y(t)|^2 dx \\
&\quad+( \sigma + L_f^2)  \int_{\Omega_{\bar{R}}^C}  \chi\left(\frac{|x|^2}{\bar{R}}\right)|y(t-\tau)|^2\mathrm{d}x.
\end{aligned}
\end{equation}
Thanks to Gr{o}nwall's inequality on $[0, t]$, we have
 \begin{equation}\label{5.21}
\begin{aligned}
\int_{\Omega_{\bar{R}}^C}\chi\left(\frac{|x|^2}{\bar{R}}\right)|y(t)|^2\mathrm{d} x&\leq e^{(\sigma-\mu+1) t}\int_{\Omega_{\bar{R}}^C}\chi\left(\frac{|x|^2}{\bar{R}}\right)|y(0)|^2\mathrm{d} x+( \sigma +L_f^2) \int_{0}^te^{(\sigma-\mu+1) (t-s)}\\
& \int_{\Omega_{\bar{R}}^C}  \chi\left(\frac{|x|^2}{\bar{R}}\right)|y(s-\tau)|^2\mathrm{d}x\mathrm{d}s.
\end{aligned}
\end{equation}
 This implies  that, for all  $t\in[0, \infty)$ and $\zeta \in[-\tau, 0]$,
 \begin{equation}\label{5.22}
\begin{aligned}
\int_{\Omega_{\bar{R}}^C}\chi\left(\frac{|x|^2}{\bar{R}}\right)|y(t+\zeta)|^2\mathrm{d} x&\leq e^{(\sigma-\mu+1) (t+\zeta)}\int_{\Omega_{\bar{R}}^C}\chi\left(\frac{|x|^2}{\bar{R}}\right)|y(0)|^2\mathrm{d} x\\&+( \sigma +L_f^2) \int_{0}^{t+\zeta}e^{(\sigma-\mu+1) (t+\zeta-s)}
 \int_{\Omega_{\bar{R}}^C}  \chi\left(\frac{|x|^2}{\bar{R}}\right)|y(s-\tau)|^2\mathrm{d}x\mathrm{d}s \\&\leq c_2e^{(\sigma-\mu+1)t}\int_{\Omega_{\bar{R}}^C}\chi\left(\frac{|x|^2}{\bar{R}}\right)|y(0)|^2\mathrm{d} x\\
 &+c_2(\sigma +L_f^2) \int_{0}^{t}e^{(\sigma-\mu+1) (t-s)} \int_{\Omega_{\bar{R}}^C}  \chi\left(\frac{|x|^2}{\bar{R}}\right)|y(s-\tau)|^2\mathrm{d}x\mathrm{d}s.
\end{aligned}
\end{equation}
Notice that $\left\|y_{t}\right\|_{\mathcal{C}}=\sup \left\{\|y(t+\zeta)\|: \zeta \in[-\tau, 0]\right\},$ whence
\begin{equation}\label{5.23}
\begin{aligned}
\|y_t\|_{\mathcal{C}}^{2}& \leq c_2e^{(\sigma-\mu+1) t}\|\phi-\varphi\|_{\mathcal{C}}^{2}+  c_2(\sigma+ L_{f}^{2})\int_{0}^{t} e^{(\sigma-\mu+1) (t-s)}\| y_s \|_{_{\mathcal{C}}}^{2}\mathrm{d}s.
\end{aligned}
\end{equation}
Multiplying both sides of \eqref{5.23} by  $e^{(\mu-\sigma-1)t}$, we derive
\begin{equation}\label{5.24}
\begin{aligned}
e^{(\mu-\sigma-1)t}\|y_t\|_{\mathcal{C}}^{2}& \leq c_2 \|\phi-\varphi\|_{\mathcal{C}}^{2}+  c_2(\sigma+ L_{f}^{2})\int_{0}^{t} e^{-(\sigma-\mu+1) s}\| y_s \|_{_{\mathcal{C}}}^{2}\mathrm{d}s.
\end{aligned}
\end{equation}
Using Gr{o}nwall's inequality again yields
\begin{equation}\label{5.24a}
\begin{aligned}
e^{(\mu-\sigma-1)t}\|y_t\|_{\mathcal{C}}^{2}& \leq c_2e^{c_2(\sigma+ L_{f}^{2})t} \|\phi-\varphi\|_{\mathcal{C}}^{2},
\end{aligned}
\end{equation}
indicating that
\begin{equation}\label{5.24b}
\begin{aligned}
 \|y_t\|_{\mathcal{C} }^{2} \leq c_2e^{[c_2(\sigma+ L_{f}^{2})-(\mu-\sigma-1)]t} \|\phi-\varphi\|_{\mathcal{C} }^{2}\leq c_2e^{[c_2(\sigma+ L_{f}^{2})-(\mu-\sigma-1)]t} \|\phi-\varphi\|_{\mathcal{C}}^{2}.
\end{aligned}
\end{equation}
The proof is complete.
\end{proof}
\section{Hausdorff  and  fractal dimensions}
This section is devoted to the estimations of Hausdorff and  fractal dimensions of the global attractor obtained in Theorem \ref{thm4.1}. We first concentrate on the Hausdorff dimension and  review the definition  of Hausdorff dimension of global attractors for autonomous dynamical systems.

Let  $\mathcal{A}$  be the global attractor of \eqref{1} obtained in Theorem \ref{thm4.1}.  The Hausdorff dimension of the compact set $\mathcal{A}\subset \mathcal{C}$ is
$$
d_{\mathcal{C}}(\mathcal{A})=\inf \left\{d: \mu_{\mathcal{C}}(\mathcal{A}, d)= 0 \right\}
$$
where, for $d \geq 0$,
$$\mu_{\mathcal{C}}(\mathcal{A}, d)=\lim _{\varepsilon \rightarrow 0} \mu_{\mathcal{C}}(\mathcal{A}, d, \varepsilon)$$
 denotes the $d$-dimensional Hausdorff measure of the set $\mathcal{A}\subset \mathcal{C}$, where
 $$\mu_{\mathcal{C}}(\mathcal{A}, d, \varepsilon)=\inf \sum_{i} r_{i}^{d}$$
 and the infimum is taken over all coverings of $\mathcal{A}$ by balls of radius $r_{i} \leqslant \varepsilon$. It can be shown that there exists $d_{\mathcal{C}}(\mathcal{A}) \in[0,+\infty]$ such that $\mu_{\mathcal{C}}(\mathcal{A}, d)=0$ for $d>d_{\mathcal{C}}(\mathcal{A})$ and $\mu_{\mathcal{C}}(\mathcal{A}, d)=\infty$ for $d<d_{\mathcal{C}}(\mathcal{A})$. $d_{\mathcal{C}}(\mathcal{A})$ is called the Hausdorff dimension of $\mathcal{A}$.

For any $\phi\in \mathcal{C}$, define the map $P_{\chi(\Omega_K^C)}: \mathcal{C}\rightarrow \mathcal{C} $ and $P_{\chi(\Omega_K)}: \mathcal{C}\rightarrow \mathcal{C}_{\Omega_K}$ by
\begin{equation}\label{6.1a}
\begin{aligned}
P_{\chi(\Omega_K)} \phi=\chi(\Omega_K)\phi
\end{aligned}
\end{equation}
and
\begin{equation}\label{6.1}
\begin{aligned}
P_{\chi(\Omega_K^C)} \phi=\chi(\Omega_K^C)\phi
\end{aligned}
\end{equation}
respectively, where $\chi(\Omega_K)$ and $\chi(\Omega_K^C)$ are the characteristic functions defined in \eqref{2.1} and \eqref{2.2}.
Then, by \eqref{3.12a}, we have
\begin{equation}\label{6.2}
\begin{aligned}
\mathcal{C}=P_{k_m}P_{\chi(\Omega_K)}\mathcal{C}  \bigoplus  Q_{k_m}P_{\chi(\Omega_K)}\mathcal{C} \bigoplus P_{\chi(\Omega_K^C)}\mathcal{C} \triangleq  \mathcal{C}_{\Omega_K}^U \bigoplus  \mathcal{C}_{\Omega_K}^S \bigoplus \mathcal{C}_{\Omega_K^C}.
\end{aligned}
\end{equation}
where $P_{k_m}$ and $Q_{k_m}$ are defined by \eqref{3.12b} and \eqref{3.12c}.
Define
\begin{equation}\label{6.2a}
\begin{aligned}
\mathcal{P}=P_{k_m}P_{\chi(\Omega_K)}, \mathcal{Q}=Q_{k_m}P_{\chi(\Omega_K)},\mathcal{R}= P_{\chi(\Omega_K^C)}.
\end{aligned}
\end{equation}
It follows from  lemmas  \ref{lem5.1} and \ref{lem5.2} that
\begin{equation}\label{6.3}
\begin{aligned}
\|\mathcal{P}[\Phi(t,\varphi)-\Phi(t,\psi)]\|_{\mathcal{C}(\Omega_K)} \leq e^{(L_f+\varrho_1)t}\|\varphi-\psi\|_{\mathcal{C}},
\end{aligned}
\end{equation}
 \begin{equation}\label{6.4}
\begin{aligned}
\|\mathcal{Q}[\Phi(t,\varphi)-\Phi(t,\psi)]\|_{\mathcal{C}(\Omega_K)} \leq (K_me^{\varrho_m t}+\frac{K_mL_f }{\varrho_1+L_f-\varrho_m} e^{(L_f+\varrho_1)t})\|\varphi-\psi\|_{\mathcal{C}},
\end{aligned}
\end{equation}
and
\begin{equation}\label{6.5}
\begin{aligned}
\|\mathcal{R}[\Phi(t,\varphi)-\Phi(t,\psi)]\|_{\mathcal{C}(\Omega_K^C)} \leq \sqrt{c_2}e^{\frac{1}{2}[c_2(\sigma+ L_{f}^{2})-(\mu-\sigma-1)]t}\|\varphi-\psi\|_{\mathcal{C}}.
\end{aligned}
\end{equation}
Moreover, by \eqref{3.12} and \eqref{3.12a}, we can see $\mathcal{P}$ has $k_m$ dimension range space, that is $P_{k_m}\mathcal{C}_{\Omega_K}$ is a  $k_m$ dimension subspace of $\mathcal{C}$.
In the following, we prove   that the global attractor  of \eqref{1} established in Theorem \ref{thm4.1} has finite Hausdorff and fractal dimension.
\begin{thm}\label{thm5.2} Let $k_m, \varrho_{1}, \varrho_{m}$ and $K_m$ be  defined in \eqref{3.11}  and \eqref{3.13} respectively,  $\mathcal{P}, \mathcal{Q}$ and $\mathcal{R}$ be  defined by \eqref{6.2a}. Assume that conditions of Lemma \ref{lem5.1} are satisfied  and there exist $t_0>0$ and $0<\alpha<2$ such that
 \begin{equation}\label{6.6}
\eta= 2K_me^{\varrho_mt_0}+(\alpha+\frac{2K_mL_f }{\varrho_1+L_f-\varrho_m}) e^{(L_f+\varrho_1)t_0}+2\sqrt{c_2}e^{\frac{[c_2(\sigma+ L_{f}^{2})-(\mu-\sigma-1)]t_0}{2}}<1.
\end{equation}
Then, the Hausdorff dimension of global attractor $\mathcal{A}$ satisfies
 \begin{equation}\label{6.7}
d<\frac{-\ln k_m-k_m\ln (2+\frac{4}{\alpha})}{\ln \left(2K_me^{\varrho_mt_0}+(\alpha+\frac{2K_mL_f }{\varrho_1+L_f-\varrho_m}) e^{(L_f+\varrho_1)t_0}+2\sqrt{c_2}e^{\frac{[c_2(\sigma+ L_{f}^{2})-(\mu-\sigma-1)]t_0}{2}}\right)}.
\end{equation}
\end{thm}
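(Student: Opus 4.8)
The plan is to replace the flow by the time-$t_0$ map $S:=\Phi(t_0,\cdot)$ and to turn the three squeezing estimates \eqref{6.3}, \eqref{6.4} and \eqref{6.5} into a single covering step for $S$: any ball of radius $r$ has image (intersected with $\mathcal{A}$) coverable by a fixed number $N$ of balls of radius $\eta r$ with $\eta<1$. Iterating this against the definition of the Hausdorff measure then gives \eqref{6.7}. Throughout I use that, by Theorem~\ref{thm4.1}, $\mathcal{A}$ is compact and invariant, so $S\mathcal{A}=\mathcal{A}$; this invariance is what allows the iteration.

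For the covering step, fix $x\in\mathcal{A}$, $r>0$ and write each image point as $Sy=\mathcal{P}Sy+\mathcal{Q}Sy+\mathcal{R}Sy$ via \eqref{6.2}. By \eqref{6.3} the $\mathcal{P}$-components of $S(B(x,r)\cap\mathcal{A})$ lie in the ball of the $k_m$-dimensional space $\mathcal{C}_{\Omega_K}^U$ of centre $\mathcal{P}Sx$ and radius $R_1:=e^{(L_f+\varrho_1)t_0}r$. I cover this ball using the finite-dimensional covering estimate $N(r_1,B^F_{r_2})\le m2^m(1+r_1/r_2)^m$ with $m=k_m$ \cite{30}, choosing $r_1=R_1$ and $r_2:=\tfrac{\alpha}{2}R_1$; the constraint $0<\alpha<2$ guarantees $r_1>r_2$ as the lemma requires, and the count becomes exactly $k_m2^{k_m}(1+\tfrac{2}{\alpha})^{k_m}=k_m(2+\tfrac{4}{\alpha})^{k_m}=:N$. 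Denoting the $\mathcal{P}$-centres by $p_1,\dots,p_N$ and centring, for each $j$, a ball of $\mathcal{C}$ at $p_j+\mathcal{Q}Sx+\mathcal{R}Sx$, the triangle inequality together with \eqref{6.4} and \eqref{6.5} bounds the radius of the $j$-th ball by
\begin{equation*}
\tfrac{\alpha}{2}e^{(L_f+\varrho_1)t_0}r+\Big(K_me^{\varrho_mt_0}+\tfrac{K_mL_f}{\varrho_1+L_f-\varrho_m}e^{(L_f+\varrho_1)t_0}\Big)r+\sqrt{c_2}\,e^{\frac{[c_2(\sigma+L_f^2)-(\mu-\sigma-1)]t_0}{2}}r=\tfrac{\eta}{2}\,r,
\end{equation*}
with $\eta$ as in \eqref{6.6}. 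Thus $S(B(x,r)\cap\mathcal{A})$ is covered by $N$ balls of radius $\tfrac{\eta}{2}r$.

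Next I drop the hypothesis that the ball be centred on $\mathcal{A}$. Let $\tilde N(\rho)$ be the least number of balls of radius $\rho$ covering $\mathcal{A}$; by compactness it is finite. If $B(c,r)$ meets $\mathcal{A}$, pick $x\in B(c,r)\cap\mathcal{A}$, so $B(c,r)\cap\mathcal{A}\subseteq B(x,2r)\cap\mathcal{A}$ and the previous step covers $S(B(c,r)\cap\mathcal{A})$ by $N$ balls of radius $\tfrac{\eta}{2}\cdot 2r=\eta r$. Applying $S$ to an optimal radius-$r$ cover of $\mathcal{A}$ and using $\mathcal{A}=S\mathcal{A}$ then gives $\tilde N(\eta r)\le N\,\tilde N(r)$; here the factor $2$ from re-centring is exactly compensated by the factor $\tfrac12$ in the covering radius, so the scale ratio is precisely $\eta$, and $\eta<1$ from \eqref{6.6} makes this a genuine contraction. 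Iterating from a fixed $r_0$ yields $\tilde N(\eta^k r_0)\le N^k\,\tilde N(r_0)$ for every $k\ge1$.

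Finally I insert this into the Hausdorff measure. Using the cover of $\mathcal{A}$ by $N^k\tilde N(r_0)$ balls of radius $\eta^k r_0$,
\begin{equation*}
\mu_{\mathcal{C}}(\mathcal{A},d,\eta^k r_0)\le N^k\,\tilde N(r_0)\,(\eta^k r_0)^d=\tilde N(r_0)\,r_0^{\,d}\,\big(N\eta^{d}\big)^k .
\end{equation*}
Whenever $N\eta^{d}<1$, that is $d>\dfrac{\ln N}{-\ln\eta}$, the right-hand side tends to $0$ as $k\to\infty$, whence $\mu_{\mathcal{C}}(\mathcal{A},d)=0$ and $d_{\mathcal{C}}(\mathcal{A})\le d$. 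Letting $d\downarrow\dfrac{\ln N}{-\ln\eta}$ and substituting $N=k_m(2+\tfrac{4}{\alpha})^{k_m}$ produces exactly the bound \eqref{6.7}. I expect the main obstacle to be the constant bookkeeping in the covering step: one must choose $r_2=\tfrac{\alpha}{2}R_1$ precisely to reproduce the factor $(2+\tfrac{4}{\alpha})^{k_m}$, and verify that the coefficients surviving the triangle inequality are exactly those assembled into $\eta$ in \eqref{6.6}, so that the re-centring loss cancels and the iteration ratio is $\eta$ rather than some larger quantity; the rest is the routine geometric-series estimate above.
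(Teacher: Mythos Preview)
Your proposal is correct and follows essentially the same approach as the paper: re-centre a covering ball on a point of $\mathcal{A}$, use \eqref{6.3} to localise the $\mathcal{P}$-component in a $k_m$-dimensional ball, cover that ball via Lemma~\ref{lem2.2} with ratio $2/\alpha$ to produce the count $k_m(2+\tfrac{4}{\alpha})^{k_m}$, combine with \eqref{6.4}--\eqref{6.5} to obtain new balls of radius $\eta r$, and iterate using $S\mathcal{A}=\mathcal{A}$. The only cosmetic difference is that the paper works directly with the Hausdorff premeasure $\mu_H(\mathcal{A},d,\varepsilon)$ and arbitrary covers of variable radii $r_i\le\varepsilon$, deriving the recursion $\mu_H(\mathcal{A},d,\eta\varepsilon)\le N\eta^{d}\mu_H(\mathcal{A},d,\varepsilon)$, whereas you phrase the same step through the equal-radius covering number $\tilde N(r)$ and the recursion $\tilde N(\eta r)\le N\tilde N(r)$; both yield the identical bound \eqref{6.7}.
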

\begin{proof}
Since $\mathcal{A}$ is a compact subset of $\mathcal{C}$, for any $0<\varepsilon<1$, there exist $r_1, \ldots, r_N$ in $(0, \varepsilon]$ and $\tilde{u}_1, \ldots, \tilde{u}_N$ in $\mathcal{C}$ such that
 \begin{equation}\label{6.8}
\begin{gathered}
\mathcal{A}\subset \bigcup_{i=1}^N B\left(\tilde{u}_i, r_i\right),
\end{gathered}
\end{equation}
where $B(\tilde{u}_i, r_i)$ represents the ball in $\mathcal{C}$ of center $\tilde{u}_i$ and radius $r_i$. Without loss of generality, we can assume that for any $i$
 \begin{equation}\label{6.9}
\begin{gathered}
B\left(\tilde{u}_i, r_i\right) \cap \mathcal{A} \neq \emptyset,
\end{gathered}
\end{equation}
otherwise, it can be deleted from the sequence $\tilde{u}_1, \ldots, \tilde{u}_N$. Therefore, we can choose  $u_i, i=1,2, \cdots, N$ such that
 \begin{equation}\label{6.10}
\begin{gathered}
u_i \in B\left(\tilde{u}_i, r_i\right) \cap \mathcal{A},
\end{gathered}
\end{equation}
and
 \begin{equation}\label{6.11}
\begin{gathered}
\mathcal{A} \subset \bigcup_{i=1}^N\left(B\left(u_i, 2 r_i\right) \cap \mathcal{A}\right).
\end{gathered}
\end{equation}
It follows from \eqref{6.3} and \eqref{6.4} that, for any $t_0>0$ and $u\in B\left(u_i, 2 r_i\right) \cap \mathcal{A}$,
 \begin{equation}\label{6.12}
\begin{gathered}
\left\|\mathcal{P}\Phi(t_0) u- \mathcal{P} \Phi\left(t_0\right) u_i\right\|_{\mathcal{C}} \leq 2e^{(L_f+\varrho_1)t_0} r_i,
\end{gathered}
\end{equation}
 \begin{equation}\label{6.13}
\begin{gathered}
\left\|\mathcal{Q}\Phi(t_0) u- \mathcal{Q} \Phi\left(t_0\right) u_i\right\|_{\mathcal{C}} \leq 2(K_me^{\varrho_m t_0}+\frac{K_mL_f }{\varrho_1+L_f-\varrho_m} e^{(L_f+\varrho_1)t_0})r_i,
\end{gathered}
\end{equation}
and
 \begin{equation}\label{6.13a}
\begin{gathered}
\left\|\mathcal{R}\Phi(t_0) u- \mathcal{R} \Phi\left(t_0\right) u_i\right\|_{\mathcal{C}} \leq 2\sqrt{c_2}e^{\frac{[c_2(\sigma+ L_{f}^{2})-(\mu-\sigma-1)]t_0}{2}}r_i.
\end{gathered}
\end{equation}
By Lemma \ref{lem2.2}, for any $\alpha>0$, we can find $y_i^1, \ldots, y_i^{n_i}$ such that
 \begin{equation}\label{6.14}
\begin{gathered}
B_{\mathcal{P} \mathcal{C}}\left(\mathcal{P} \Phi\left(t_0\right) u_i, 2e^{(L_f+\varrho_1)t_0} r_i\right) \subset \bigcup_{j=1}^{n_i} B_{\mathcal{P}\mathcal{C}}\left(y_i^j, \alpha e^{(L_f+\varrho_1)t_0} r_i\right)
\end{gathered}
\end{equation}
with
 \begin{equation}\label{6.15}
\begin{gathered}
n_i \leq k_m \left(2+ \frac{4}{\alpha}  \right)^{k_m},
\end{gathered}
\end{equation}
where $k_m$ is the dimension of $\mathcal{P }\mathcal{C}$ and we have denoted by $B_{\mathcal{P}\mathcal{C}}(y, r)$ the ball in $\mathcal{P}\mathcal{C}$ of radius $r$ and center $y$.

Set
 \begin{equation}\label{6.16}
\begin{gathered}
u_i^j=y_i^j+\mathcal{Q} \Phi\left(t_0\right) u_i+\mathcal{R}\Phi\left(t_0\right) u_i
\end{gathered}
\end{equation}
for $i=1, \ldots, N, j=1, \ldots, n_i$. Then, for any $u\in B\left(u_i, 2 r_i\right) \cap \mathcal{A}$, there exists a $j$ such that
 \begin{equation}\label{6.17}
\begin{aligned}
&\left\|\Phi\left(t_0\right) u-u_i^j\right\|_{\mathcal{C}}\\
&\quad \leq\left\|\mathcal{P} \Phi\left(t_0\right) u-y_i^j\right\|_{\mathcal{C}}+\left\|\mathcal{Q} \Phi\left(t_0\right) u-\mathcal{Q} \Phi\left(t_0\right) u_i\right\|_{\mathcal{C}}+\left\|\mathcal{R} \Phi\left(t_0\right) u-\mathcal{R} \Phi\left(t_0\right) u_i\right\|_{\mathcal{C}} \\
&\quad \leq\left(2K_me^{\varrho_mt_0}+(\alpha+\frac{2K_mL_f }{\varrho_1+L_f-\varrho_m}) e^{(L_f+\varrho_1)t_0}+2\sqrt{c_2}e^{\frac{[c_2(\sigma+ L_{f}^{2})-(\mu-\sigma-1)]t_0}{2}}\right) r_i.
\end{aligned}
\end{equation}
Denote by $\eta=\left(2K_me^{\varrho_mt_0}+(\alpha+\frac{2K_mL_f }{\varrho_1+L_f-\varrho_m}) e^{(L_f+\varrho_1)t_0}+2\sqrt{c_2}e^{\frac{[c_2(\sigma+ L_{f}^{2})-(\mu-\sigma-1)]t_0}{2}}\right)$. We then have
 \begin{equation}\label{6.22}
\Phi\left(1\right)\left(B\left(u_i, 2 r_i\right) \cap \mathcal{A}\right) \subset \bigcup_{j=1}^{n_i} B\left(u_i^j, \eta r_i\right).
\end{equation}
Thanks to the invariance of $\mathcal{A}$, i.e., $\mathcal{A}=\Phi\left(1\right) \mathcal{A}$, we have
 \begin{equation}\label{6.23}
\mathcal{A} \subset \bigcup_{i=1}^N \bigcup_{j=1}^{n_i} B\left(u_i^j, \eta r_i\right) .
\end{equation}
This implies that, for any $d \geq 0$,
 \begin{equation}\label{6.24}
\begin{aligned}
\mu_H\left(\mathcal{A}, d, \eta\varepsilon\right)
\leq \sum_{i=1}^N \sum_{j=1}^{n_i} \eta^{d}r_i^d \leq k_m (2+\frac{4}{\alpha})^{k_m} \eta^{d} \sum_{i=1}^N r_i^d.
\end{aligned}
\end{equation}
We deduce, by taking the infimum over all the coverings of $\mathcal{A}$ by balls of radii less than $\varepsilon$,
 \begin{equation}\label{6.25}
\begin{aligned}
\mu_H\left(\mathcal{A}, d, \eta\varepsilon\right)\leq k_m (2+\frac{4}{\alpha})^{k_m} \eta^{d}\mu_H(\mathcal{A}, d, \varepsilon).
\end{aligned}
\end{equation}
Applying the formula recursively for $k$ times we obtain
 \begin{equation}\label{6.28}
\begin{aligned}
\mu_H\left(\mathcal{A}, d, (\eta\varepsilon)^k\right)\leq [k_m (2+\frac{4}{\alpha})^{k_m} \eta^{d}]^k\mu_H(\mathcal{A}, d, \varepsilon).
\end{aligned}
\end{equation}
Therefore, if
 \begin{equation}\label{6.26}
d<\frac{-\ln k_m-k_m\ln (2+\frac{4}{\alpha})}{\ln \left(2K_me^{\varrho_mt_0}+(\alpha+\frac{2K_mL_f }{\varrho_1+L_f-\varrho_m}) e^{(L_f+\varrho_1)t_0}+2\sqrt{c_2}e^{\frac{[c_2(\sigma+ L_{f}^{2})-(\mu-\sigma-1)]t_0}{2}}\right)},
\end{equation}
then
 \begin{equation}\label{6.27}
k_m (2+\frac{4}{\alpha})^{k_m}  \eta^{d}<1.
\end{equation}
Thus, by taking $k \rightarrow \infty$, we have $(\eta\varepsilon)^k\rightarrow 0$
and \eqref{6.28} leads to
 \begin{equation}\label{6.31}
\mu_H(\mathcal{A}, d, (\eta\varepsilon)^k) \rightarrow 0.
\end{equation}
This completes the proof.
\end{proof}
\begin{rem}\label{rem5.2}
Since $\varrho_1$ and $\varrho_m$ represent the first and the $m$-th eigenvalues of the linear part $A_U$ of Eq. \eqref{5.1}, which depends on the delay effect, we can see the Hausdorff dimension of global attractor $\mathcal{A}$ of Eq. \eqref{5.1} depends on  the time delay via the distribution of eigenvalues of the linear part $A_U$ of Eq. \eqref{5.1}. Furthermore, it follows from \eqref{6.26} that the Hausdorff dimension depends on  a variable parameter $\alpha$, constants of exponential dichotomy, the Lipschitz constant of the nonlinear term and the spectrum gap of the linear part $A_U$, indicating that the Hausdorff dimension of global attractor $\mathcal{A}$ is very flexible to be tuned by a variety of parameters.
\end{rem}

Next, we study the fractal dimension of attractors for the nonlinear dynamical system  $\Phi(t)$. The fractal dimension (or capacity) of $\mathcal{A}$ is defined as
 \begin{equation}\label{6.33}
\operatorname{dim}_f \mathcal{A}=\limsup_{\varepsilon \rightarrow 0} \frac{\ln N_{\varepsilon}(\mathcal{A})}{-\ln \varepsilon},
\end{equation}
where $N_{\varepsilon}(\mathcal{A})$ is the minimum number of balls of radius less than $\varepsilon$  needed to cover $\mathcal{A}$.
\begin{thm}\label{thm5.3}
Let $k_m, \varrho_{1}, \varrho_{m}$ and $K_m$ be  defined in \eqref{3.11}  and \eqref{3.13} respectively,  $\mathcal{P}, \mathcal{Q}$ and $\mathcal{R}$ be  defined by \eqref{6.2a}. Assume that conditions of Lemma \ref{lem5.1} are satisfied.  Moreover, assume   there exists $\beta>0$ such that $\zeta:=\beta e^{(L_f+\varrho_1)}+K_me^{\varrho_m}+\frac{K_mL_f }{\varrho_1+L_f-\varrho_m}e^{(L_f+\varrho_1)}+\sqrt{c_2}e^{\frac{[c_2(\sigma+ L_{f}^{2})-(\mu-\sigma-1)]}{2}}<1$. Then, the fractal dimension of global attractor $\mathcal{A}$  has an upper bound
 \begin{equation}\label{6.34}
\operatorname{dim}_f \mathcal{A}\leq \frac{\ln k_m +k_m \ln(2+\frac{2}{\beta})}{-\ln \zeta}<\infty.
\end{equation}
\end{thm}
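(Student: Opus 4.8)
The plan is to run the same covering-and-iteration scheme as in the proof of Theorem \ref{thm5.2}, but now bookkeeping the \emph{number} of covering balls rather than a Hausdorff sum, and to specialize the time to $t_0=1$ so that the three contraction factors in \eqref{6.3}--\eqref{6.5} combine into the single constant $\zeta$. The core of the argument is a one-step refinement estimate: I will show that
\[
N_{\zeta\varepsilon}(\mathcal{A}) \le k_m\Big(2+\tfrac{2}{\beta}\Big)^{k_m} N_{\varepsilon}(\mathcal{A})
\]
for every sufficiently small $\varepsilon>0$, with $N_\varepsilon$ as in \eqref{6.33}. Everything then follows by iterating this inequality and inserting it into the definition of the fractal dimension.

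To prove the refinement, I start from an optimal cover $\mathcal{A}\subset\bigcup_{i=1}^{N}B(u_i,\varepsilon)$ with $N=N_\varepsilon(\mathcal{A})$ and, exactly as in \eqref{6.9}--\eqref{6.10}, centers $u_i\in\mathcal{A}$. Fix any $u\in B(u_i,\varepsilon)\cap\mathcal{A}$ and apply $\Phi(1)$. Using the direct-sum splitting $\Phi(1)u=\mathcal{P}\Phi(1)u+\mathcal{Q}\Phi(1)u+\mathcal{R}\Phi(1)u$ together with the squeezing bounds \eqref{6.3}, \eqref{6.4}, \eqref{6.5} evaluated at $t_0=1$ and $\|u-u_i\|_{\mathcal{C}}\le\varepsilon$, the $\mathcal{P}$-component of the image is confined to the ball of radius $e^{(L_f+\varrho_1)}\varepsilon$ centered at $\mathcal{P}\Phi(1)u_i$ in the $k_m$-dimensional space $\mathcal{P}\mathcal{C}$, while the $\mathcal{Q}$- and $\mathcal{R}$-components stay within $(K_me^{\varrho_m}+\frac{K_mL_f}{\varrho_1+L_f-\varrho_m}e^{(L_f+\varrho_1)})\varepsilon$ and $\sqrt{c_2}e^{[c_2(\sigma+L_f^2)-(\mu-\sigma-1)]/2}\varepsilon$ of the fixed points $\mathcal{Q}\Phi(1)u_i$ and $\mathcal{R}\Phi(1)u_i$. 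Covering the $\mathcal{P}$-ball by balls of radius $\beta e^{(L_f+\varrho_1)}\varepsilon$, the finite-dimensional covering Lemma \ref{lem2.2} furnishes at most $n_i\le k_m2^{k_m}(1+\frac{1}{\beta})^{k_m}=k_m(2+\frac{2}{\beta})^{k_m}$ such balls; centering a new ball at each $u_i^j=y_i^j+\mathcal{Q}\Phi(1)u_i+\mathcal{R}\Phi(1)u_i$ and summing the three radii shows, by the triangle inequality as in \eqref{6.17}, that $\Phi(1)\big(B(u_i,\varepsilon)\cap\mathcal{A}\big)$ is covered by $n_i$ balls of radius $\zeta\varepsilon$. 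Since $\mathcal{A}=\Phi(1)\mathcal{A}$ by the invariance from Theorem \ref{thm4.1}, summing over $i$ yields the refinement inequality.

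With the refinement in hand, iterating it $k$ times produces $N_{\zeta^{k}\varepsilon}(\mathcal{A})\le \big[k_m(2+\frac{2}{\beta})^{k_m}\big]^{k}N_\varepsilon(\mathcal{A})$, which is legitimate precisely because $\zeta<1$ forces the radii $\zeta^{k}\varepsilon\to0$. Writing $C=k_m(2+\frac{2}{\beta})^{k_m}$ and inserting $\delta=\zeta^{k}\varepsilon$ into \eqref{6.33}, the ratio $\ln N_{\zeta^{k}\varepsilon}(\mathcal{A})/\big(-\ln(\zeta^{k}\varepsilon)\big)$ is bounded above by $\frac{k\ln C+\ln N_\varepsilon(\mathcal{A})}{-k\ln\zeta-\ln\varepsilon}$, whose $k\to\infty$ limit is $\frac{\ln C}{-\ln\zeta}=\frac{\ln k_m+k_m\ln(2+\frac{2}{\beta})}{-\ln\zeta}$, the claimed bound, and finiteness is immediate once $\zeta<1$.

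I expect the main difficulty to be bookkeeping rather than conceptual: one must check that the covering radius generated inside $\mathcal{P}\mathcal{C}$, \emph{after} adding the two contraction radii coming from $\mathcal{Q}$ and $\mathcal{R}$, collapses to exactly $\zeta\varepsilon$ with the constant $2+\frac{2}{\beta}$ (rather than the $2+\frac{4}{\beta}$ of the Hausdorff count), which hinges on using the factor-one form of \eqref{6.3} and starting from radius-$\varepsilon$ balls. The only genuinely delicate point is the final limit in \eqref{6.33}: one must pass from the geometric subsequence $\zeta^{k}\varepsilon$ to an arbitrary $\delta\to0$ by interpolating over $\delta\in(\zeta^{k+1}\varepsilon,\zeta^{k}\varepsilon]$, for which the monotonicity of $\delta\mapsto N_\delta(\mathcal{A})$ and the finiteness of $N_\varepsilon(\mathcal{A})$ (guaranteed by the compactness of $\mathcal{A}$ from Theorem \ref{thm4.1}) are exactly what is needed.
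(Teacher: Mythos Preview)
Your proposal is correct and follows essentially the same approach as the paper: split via $\mathcal{P},\mathcal{Q},\mathcal{R}$, apply the squeezing bounds \eqref{6.3}--\eqref{6.5} at $t_0=1$, refine the $\mathcal{P}$-image with Lemma~\ref{lem2.2}, use invariance of $\mathcal{A}$, and iterate. The only cosmetic difference is that the paper starts the iteration from a \emph{single} ball $B(u_0,R_{\mathcal{A}})$ containing $\mathcal{A}$ (so the initial count is $1$ and the extra $\ln N_\varepsilon(\mathcal{A})$ term never appears), whereas you start from an optimal $\varepsilon$-cover and let that term wash out in the $k\to\infty$ limit; your explicit interpolation from the geometric subsequence $\zeta^k\varepsilon$ to arbitrary $\delta\to0$ is in fact a point the paper glosses over.
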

\begin{proof}
Since $\mathcal{A}$ is the global attractor, then it is compact and hence the number $R_\mathcal{A}$ is well defined. Thus, for any $u_0 \in \mathcal{A}$, we have
 \begin{equation}\label{6.35}
\mathcal{A} \subseteq B\left(u_0, R_\mathcal{A}\right),
\end{equation}
where $B\left(u_0,R_\mathcal{A}\right)$ is the ball with center $u_0$ and radius $R_\mathcal{A}$. For any $u \in \mathcal{A} \cap B\left(u_0, R_\mathcal{A}\right)$, it follows from lemmas \ref{lem5.1} and \ref{lem5.2}, \eqref{6.3} and \eqref{6.4} that for any $t_0>0$ and $u\in B\left(u_i, 2 r_i\right) \cap \mathcal{A}$,
 \begin{equation}\label{6.36}
\begin{gathered}
\left\|\mathcal{P}\Phi(t_0) u- \mathcal{P} \Phi\left(t_0\right) u_i\right\|_{\mathcal{C}} \leq  e^{(L_f+\varrho_1)t_0} r_i,
\end{gathered}
\end{equation}
 \begin{equation}\label{6.37}
\begin{gathered}
\left\|\mathcal{Q}\Phi(t_0) u- \mathcal{Q} \Phi\left(t_0\right) u_i\right\|_{\mathcal{C}} \leq  (K_me^{\varrho_m t_0}+\frac{K_mL_f }{\varrho_1+L_f-\varrho_m} e^{(L_f+\varrho_1)t_0})r_i,
\end{gathered}
\end{equation}
and
 \begin{equation}\label{6.37a}
\begin{gathered}
\left\|\mathcal{R}\Phi(t_0) u- \mathcal{R} \Phi\left(t_0\right) u_i\right\|_{\mathcal{C}} \leq \sqrt{c_2}e^{\frac{[c_2(\sigma+ L_{f}^{2})-(\mu-\sigma-1)]t_0}{2}}r_i.
\end{gathered}
\end{equation}
By Lemma \ref{lem2.2}, for any $\beta>0$, we can find $y_i^1, \ldots, y_i^{n_i}$ such that
 \begin{equation}\label{6.38}
\begin{gathered}
B_{\mathcal{P} \mathcal{C}}\left(\mathcal{P} \Phi\left(t_0\right) u_i,  e^{(L_f+\varrho_1)} r_i\right) \subset \bigcup_{j=1}^{n_i} B_{\mathcal{P}\mathcal{C}}\left(y_i^j,  \beta e^{(L_f+\varrho_1)}r_i\right)
\end{gathered}
\end{equation}
with
 \begin{equation}\label{6.15}
\begin{gathered}
n_i \leq k_m 2^{k_m} \left(1+ \frac{1}{\beta} \right)^{k_m},
\end{gathered}
\end{equation}
where $k_m$ is the dimension of $\mathcal{P }\mathcal{C}$ and we have denoted by $B_{\mathcal{P}\mathcal{C}}(y, r)$ the ball in $\mathcal{P}\mathcal{C}$ of radius $r$ and center $y$.

Take $t_0=1$ and set
 \begin{equation}\label{6.40}
\begin{gathered}
u_0^j=y_0^j+\mathcal{Q }\Phi\left(1\right) u_0+\mathcal{R} \Phi\left(1\right) u_0
\end{gathered}
\end{equation}
for $ j=1, \ldots, n_0$. Then, for any $u \in \mathcal{A} \cap B\left(u_0, R_\mathcal{A}\right)$, there exists $j$ such that
 \begin{equation}\label{6.41}
\begin{aligned}
&\left\|\Phi\left(1\right) u-u_0^j\right\|\\
&\quad \leq\left\|\mathcal{P}\Phi\left(1\right)  u-y_0^j\right\|+\left\|\mathcal{Q } \Phi\left(1\right)  u-\mathcal{Q } \Phi\left(1\right) u_0\right\|+ \left\|\mathcal{R} \Phi\left(1\right)  u-\mathcal{R} \Phi\left(1\right) u_0\right\|\\
&\quad \leq\left(\beta e^{(L_f+\varrho_1)}+K_me^{\varrho_m}+\frac{K_mL_f }{\varrho_1+L_f-\varrho_m}e^{(L_f+\varrho_1)}+\sqrt{c_2}e^{\frac{[c_2(\sigma+ L_{f}^{2})-(\mu-\sigma-1)]}{2}}\right)R_\mathcal{A}.
\end{aligned}
\end{equation}
Denote by
$\zeta=\beta e^{(L_f+\varrho_1)}+K_me^{\varrho_m}+\frac{K_mL_f }{\varrho_1+L_f-\varrho_m}e^{(L_f+\varrho_1)}+\sqrt{c_2}e^{\frac{[c_2(\sigma+ L_{f}^{2})-(\mu-\sigma-1)]}{2}}$. Since $\mathcal{A}$ is invariant,  i.e., $\mathcal{A}=S\left(1\right) \mathcal{A}$, we have
 \begin{equation}\label{6.43}
\begin{aligned}
\mathcal{A} & =\Phi\left(1\right)\left(\mathcal{A}\cap B\left(u_0, R_\mathcal{A}\right)\right) & \subseteq \bigcup_{j=1}^{n_0} B\left(u_{0}^j,\zeta R_\mathcal{A}\right).
\end{aligned}
\end{equation}
Applying the formula recursively for $k$ times gives
 \begin{equation}\label{6.44}
\begin{aligned}
\mathcal{A} & =\Phi\left(k \right)\left(\mathcal{A}\cap B\left(u_0, R_\mathcal{A}\right)\right) & \subseteq \bigcup_{j=1}^{n_0,n_1,\cdots, n_{k-1}} B\left(u_{k-1}^j,\zeta^k R_\mathcal{A}\right),
\end{aligned}
\end{equation}
implying that the minimal number $N_{r_k}\left(\mathcal{A}\right)$ of balls with radius $r_k=\zeta^k R_\mathcal{A}$ covering $\mathcal{A}$ in $\mathcal{C}$ satisfies
 \begin{equation}\label{6.44a}
\begin{aligned}
N_{r_k}\left(\mathcal{A}\right) \leq n_0 \cdot \ldots \cdot n_{k-1} \leq[k_m 2^{k_m} \left(1+\frac{1}{\beta}\right)^{k_m}]^k.
\end{aligned}
\end{equation}
Since we have assumed that $\zeta <1$, then $r_k\rightarrow 0$ as $k\rightarrow \infty$.
Then it follows from \eqref{6.33} that
 \begin{equation}\label{6.45}
\begin{aligned}
\operatorname{dim}_f \mathcal{A} & =\limsup_{r_k\rightarrow 0} \frac{\ln N_{r_k}(\mathcal{A})}{-\ln r_k}\\
& \leq \limsup_{k\rightarrow \infty} \frac{\ln [k_m 2^{k_m} \left(1+\frac{1}{\beta}\right)^{k_m}]^k}{-\ln (\zeta^k R_\mathcal{A})}\\
&=\frac{\ln k_m +k_m \ln(2+\frac{2}{\beta})}{-\ln \zeta}<\infty.
\end{aligned}
\end{equation}
\end{proof}

\section{Summary}
We end this paper by making a detailed comparison of the present work with \cite{CE00,EM99,EM01,C18}. In \cite{EM99}, the authors constructed exponential attractors of a reaction-diffusion equation in a weighted Hilbert space without giving explicit fractal dimension of the attractor by the squeeze method originated from \cite{EFNT94}, which cannot be directly to used to to \eqref{1}. In \cite{CE00} and \cite{EM01}, the authors adopted the Lyapunov exponent method to obtain  sharp estimation in a weighted Hilbert space, although the estimations depend on less parameters and may be optimal than  the present work, the methods can be neither be employed to tackle the problem in this paper since $\mathcal{C}$ is not a Hilbert space. In \cite{C18} , the authors obtained exponential attractors with explicit fractal dimensions. Specifically, in \cite{C18}, the obtained fractal dimension depend on a parameter $C$ in formula (8) between two Banach space $H$ and $H_1$, which may be different if different $H$ is differently chosen. As the afore mentioned works concerned about the dimension estimation of uniform attractors, one natural generalization of the present work is to study the nonautonomous case, which will be tackled in an upcoming paper. Moreover, as pointed in \cite{T6}, Lyapunov  exponent method will give shaper estimation of dimensions  compared with the squeeze method and hence another question is how to establish Lyapunov  exponent method  to estimate dimension of attractors in Banach spaces,  which will be studied in the near future.
\section{Acknowledgement}
This work was jointly supported by China Postdoctoral Science Foundation (2019TQ0089), China Scholarship Council(202008430247). \\
The research of T. Caraballo has been partially supported by Spanish Ministerio de Ciencia e
Innovaci\'{o}n (MCI), Agencia Estatal de Investigaci\'{o}n (AEI), Fondo Europeo de
Desarrollo Regional (FEDER) under the project PID2021-122991NB-C21\\
This work began when Wenjie Hu was visiting the Universidad de Sevilla as a visiting scholar, and he would like to thank the staff in the Facultad de Matem\'{a}ticas  for their hospitality and thank the university for its excellent facilities and support during his stay.
\small


\begin{thebibliography}{2}
 \bibitem{BV90}Babin A.V., Vishik M. I.: Attractors of partial differential evolution equations in an unbounded domain. Proc. R. Soc. Edinburgh, 116A (1990), 221-243.
 \bibitem{BV}Babin A.V., Vishik M. I.: Attractors of Evolution Equations. North-Holland, Amsterdam, 1992.
\bibitem{B97}Ball J. M.: Continuity properties and global attractors of generalized semiflows and the Navier-Stokes equations. J. Nonl. Sci., 7 (1997), 475-502.
 \bibitem{B04}Ball J. M.: Global attractors for damped semilinear wave equations. Discrete Contin. Dyn. Syst., 10 (2004), 31-52.
\bibitem{BPV}Brown R. M.,  Perry P. A., Shen Z.: On the dimension of the attractor of the non-homogeneous Navier-Stokes equations in non-smooth domains. Indiana Univ. Math. J., 49 (2000), 81-112.
\bibitem{C9}Carvalho A.N., Sonner S.: Pullback exponential attractors for evolution processes in Banach spaces: Theoretical results. Commun. Pure Appl. Anal., 12 (2013), 3047-3071.
\bibitem{CE00}Chepyzhov V.V., Efendiev M.A.: Hausdorff dimension estimation for attractors of nonautonomous dynamical systems in unbounded domains: an example. Comm. Pure Appl. Math., 53 (2000), 647-665.
\bibitem{CV94}Chepyzhov V. V., Vishik, M. I.: Attractors of nonautonomous dynamical systems and their dimension. J. Math. Pures Appl.,  73 (1994),  279-333.
\bibitem{CM}Chow S. N., Mallet-Paret J.: Integral averaging and bifurcation. J. Differential Equations, 26 (1977), 112-159.
\bibitem{T6} Constantin, P.,  Foias, C.:  Global Liapunov exponents, Kaplan-Yorke formulas and the dimension of the attractor for the 2-d Navier-Stokes equations. Com. Pure Appl. Math, 38 (1985), 1-27.
\bibitem{CFT}Constantin P., Foias C., Temam R.: Attractors representing turbulent flows. Memoirs Amer. Math. Soc., 53   (1985), 314.
\bibitem{CFT88}Constantin P., Foias C., Temam R.: On the dimension of the attractors in two-dimensional turbulence. Phys. D, 30 (1988), 284-296.
\bibitem{C17} Czaja R., Efendiev M.: Pullback exponential attractors for nonautonomous equations Part I: Semilinear parabolic problems. J. Math. Anal. Appl., 381 (2011) 748-765.
 \bibitem{DO}Douady A., Oesterl J.: Dimension de Hausdorff des attracteurs. C.R. Acad. Sci. Paris, 290 (1980), 1135-1138.
 \bibitem{EFNT94}Eden A., Foias C., Nicolaenko B., Temam R.: Exponential attractors for dissipative evolution equations.
Research in Applied Mathematics, John-Wiley, New York, 1994.
\bibitem{EM99}Efendiev M.,  Miranville A.: Finite-dimensional attractors for reaction-diffusion equations in $R^n$ with a strong non linearity. Discrete Contin. Dyn. Syst., 5 (1999), 399-424.
\bibitem{EM01}Efendiev, M., Zelik, S.: The attractor for a nonlinear reaction-diffusion system in the unbounded domain. Commun. Pure Appl. Math., 54 (2001), 625-688.
\bibitem{C18} Efendiev M., Miranville A., Zelik S.: Exponential attractors for a nonlinear reaction-diffusion system in $R$. C. R.  Acad. Sci., 330 (2000), 713-718.
\bibitem{EMZ04} Efendiev M., Miranville A., Zelik S.:  Global and exponential attractors for nonlinear reaction-diffusion systems in unbounded domains. Proc. R. Soc. Lond. Ser. A, 134 (2004), 271-315.
\bibitem{C21}Efendiev  M., Yamamoto Y., Yagi A.: Exponential attractors for non-autonomous dissipative system. J. Math. Soc. Japan, 63 (2011), 647-673.
\bibitem{C20}Efendiev M., Zelik S.: Finite and infinite-dimensional exponential attractors for porous media equations. Proc. London Math. Soc., 96 (2008), 51-77.
\bibitem{FLS96}Feireisl E., Laurencot P., Simondon F.: Global attractors for degenerate parabolic equations on unbounded domains. J. Diff. Eqs., 129 (1996), 239-261.
 \bibitem{FT} Faria T.,  Huang W., Wu  J.: Travelling waves for delayed reaction-diffusion equations with global response. Proc. R. Soc. Lond. Ser. A,  \textbf{462} (2006), 229-261.
 \bibitem{GR02} Goubet O., Rosa R.: Asymptotic smoothing and the global attractor of a weakly damped KdV equation on the real line. J. Differential Equations, 185 (2002), 25-53.
\bibitem{GSA} Gourley S.: Travelling fronts in the diffusive Nicholson's blowflies equation with distributed delays. Math. Comput. Model., \textbf{32} (2000), 843-853.
\bibitem{HJ}Hale J. K.: Asymptotic Behavior of Dissipative Systems, Math. Surveys Monogr., vol. 25, American Mathematical Society, Providence, RI  (1988).
\bibitem{hc}Hu W.,  Caraballo T.: Topological dimensions of attractors for partial functional differential equations in Banach spaces. arXiv:2303.04094v2
\bibitem{HDZ}Hu W.,  Duan Y.,   Zhou Y.: Dirichlet problem of a delay differential equation with spatial non-locality on a half plane. Nonlinear Anal. Real World Appl., 39 (2018), 300-320.
\bibitem{HC}Hu W., Zhu Q., Caraballo T.: Random attractors for a stochastic nonlocal delayed reaction-diffusion equation on a semi-infinite interval. IMA J. Appl. Math. doi:10.1093/imamat/hxad025
\bibitem{C25}M\'{a}lek J., Ruzicka M., Th\"{a}ter G.: Fractal dimension, attractors and the boussinesq approximation
in three dimensions. Acta Applic. Math., 37 (1994), 83-97.
\bibitem{30}Man\'{e} R.: On the dimension of the compact invariant sets of certain nonlinear maps, in: Lecture Notes in Math., vol. 898, Springer-Verlag, Berlin/New York, 1981, pp. 230-242.
\bibitem{MSL}Mei M., So J.W.-H.,  Li M. Y. et.al.:  Asymptotic stability of travelling waves for Nicholson's blowflies equation with diffusion. Proc. R. Soc. Edinb. Sect. A, \textbf{134} (2004), 579-594.
\bibitem{SJZ} So J.W.-H., Zou X.: Traveling waves for the diffusive Nichloson's blowflies equation. Appl. Math. Comput., \textbf{122} (2001), 385-392.
\bibitem{QY}Qin Y., Su Y.: Upper estimates on Hausdorff and fractal dimensions of global attractors for the 2D Navier-Stokes-Voight equations with a distributed delay. Asymp. Anal., 111 (2019), 179-199.
\bibitem{SW91}So J.  W-H., Wu J.: Topological dimensions of global attractors for semilinear PDEs with delays. Bull. Australian Math. Soc., 43 (1991), 407-422.
\bibitem{TR}Temam R.: Infinite-Dimensional Dynamical Systems in Mechanics and Physics, Springer Appl. Math. Sci., vol. 68, Springer-Verlag, Berlin, 1988.
\bibitem{W99}Wang B.: Attractors for reaction-diffusion equations in unbounded domains. Phys. D, 128 (1999), 41-52.
\bibitem{WK}Wang Y., Kloeden P E.: The uniform attractor of a multi-valued process generated by reaction-diffusion delay equations on an unbounded domain. Disc. Contin. Dyn. Syst., 34 (2014), 4343-4370.
\bibitem{WJ}Wu J.: Theory and applications of partial functional-differential equations, Springer-Verlag, NewYork, 1996.
 \bibitem{YCWT}Yi T., Chen Y., Wu J.: The global asymptotic behavior of nonlocal delay reaction diffusion equation with unbounded domain, Z. Angew. Math. Phys, \textbf{63} (2012), 793-812.
 \bibitem{YZD}Yi T.,  Zou X.: Dirichlet problem of a delayed reaction-diffusion equation on a semi-infinite Interval, J. Dyn. Differ. Equ., \textbf{28} (2016), 1007-1030.
\bibitem{ZM}Zhou S., Zhao M.: Fractal dimension of random attractor for stochastic non-autonomous damped wave equation with linear multiplicative white noise. Disc. Contin. Dyn. Syst., 36 (2016), 2887-2914.











\end{thebibliography}
\end{document}